\documentclass[preprint]{elsarticle}
\usepackage{cmap} 
           \usepackage[T2A]{fontenc}   
           \usepackage[utf8]{inputenc}
           \usepackage[russian,english]{babel}
\usepackage{amsmath}
\usepackage{amsfonts}
\usepackage{amssymb}
\usepackage{amsthm}
\usepackage[pdftex,unicode]{hyperref}
\usepackage{indentfirst} 
\usepackage{tikz-cd}

  \def\R{\mathbb R}

\def\om{\omega}
\def\Om{\Omega}

\def\be{\beta}

\def\al{\alpha}
\def\ga{\gamma}
\def\la{\lambda}
\def\ph{\varphi}
\def\de{\delta}
\def\ka{\kappa}

\def\nfs/{NFS}
\def\cdp/{CDP}
\def\cdpz/{CDP${}_0$}

\def\cB{\mathcal{B}}
\def\cE{\mathcal{E}}
\def\cW{\mathcal{W}}

\def\cH{\mathcal{H}}

\def\cl#1{\overline{#1}}

\def\Int{\operatorname{Int}}

\def\sp{\operatorname{sp}}

\def\es{\varnothing}
\def\Tau{{\mathcal T}}

\def\sset#1{\{#1\}}

\def\set#1{\bbset#1\eeset}
\def\bbset#1:#2\eeset{\{#1\,:\,#2\}}

\def\bbsett#1:#2\eesett{\{#1\,:\,\text{#2}\}}

\def\ibbset#1:#2\ieeset{(#1)_{#2}}

\def\cM{{\mathcal M}}

\def\tp{\Tau}

\def\cB{{\mathcal B}}

\def\cB{{\mathcal B}}

\def\cS{{\mathcal S}}
\def\cU{{\mathcal U}}
\def\cO{{\mathcal O}}

\def\cD{{\mathcal D}}

\def\wt#1{\widetilde{#1}}

\def\cI{{\mathcal I}}

\def\diag{\mathop{\bigtriangleup}}

\def\fun{{}^\frown}

\newcommand\restrA[2]{{
  \left.\kern-\nulldelimiterspace 
  #1 
  \vphantom{\big|} 
  \right|_{#2} 
  }}

\newcommand\restrB[2]{\ensuremath{\left.#1\right|_{#2}}}

\def\restr#1#2{\restrB{#1}{#2}}

\def\te{\theta}

\def\D{\Delta}
\def\term#1{{\it #1}}

\def\et(#1){ (#1)}

\newtheorem{statement}{Statement}[section]
\newtheorem{claim}{Fact}
\newtheorem{proposition}{Proposition}[section]
\newtheorem{theorem}{Theorem}[section]
\newtheorem*{theorem*}{Theorem}

\newtheorem*{lemma*}{Lemma}
\newtheorem{cor}{Corollary}[section]

\theoremstyle{definition}
\newtheorem{example}{Example}

\newtheorem{question}{Question}

\theoremstyle{remark}

\newtheorem*{note*}{Remark}

\def\gd/{$G_\delta$}

\def\rarr{\Rightarrow}
\def\larr{\Leftarrow}
\def\lrarr{\Leftrightarrow}

\def\DD{\operatornamewithlimits{%
  \mathchoice{\vcenter{\hbox{\huge \Delta}}}
             {\vcenter{\hbox{\Large \Delta}}}
             {\mathrm{\Delta}}
             {\mathrm{\Delta}}}}
             
\def\DD{\operatornamewithlimits{\mbox{\hbox{\huge \Delta}}}}

\def\DD{\operatornamewithlimits{\Delta}}

\def\DD{\operatornamewithlimits{%
  \mathchoice{\vcenter{\hbox{\huge $\Delta$}}}
             {\vcenter{\hbox{\Large $\Delta$}}}
             {\mathrm{\Delta}}
             {\mathrm{\Delta}}}}

\def\DD{\operatornamewithlimits{%
  \mathchoice{\vcenter{\hbox{\huge $\bigtriangleup$}}}
             {\vcenter{\hbox{\Large $\bigtriangleup$}}}
             {\mathrm{\bigtriangleup}}
             {\mathrm{\bigtriangleup}}}}

\def\diag{\DD}

\def\si{\sigma}

\def\cf{\operatorname{cf}}
\def\rev{\operatorname{rev}}

\def\lex#1#2{\mathfrak{L}(#2^{#1})}
\def\lex#1#2{#2^{#1}}

\def\blex#1#2#3#4{\mathfrak{B}(#1,#2;#4^{#3})}
\def\blex#1#2#3#4{\mathfrak{B}_{#4^{#3}}(#1,#2)}
\def\blex#1#2#3#4{\mathfrak{B}_{#4}(#1,#2)}

\def\dom{\operatorname{Dom}}

\def\homg#1,#2#3{\mathfrak{D}_{#1}(#2,#3)}
\def\homm#1{{\mathfrak{D}(#1)}}

\def\symblecnc{\mathbin{{}^\frown}}

\def\wob/{$WOB$}

\def\lL(#1,#2){(-\infty,#1)}
\def\lR(#1,#2){(#1,+\infty)}

\def\B{2}
\def\B{\mathsf{2}}
\def\B{\mathbf{2}}

\def\cjx/#1,#2,#3/{{\rm (J${}_{#2,#3}^{#1}$)}}
\def\cj/#1,#2/{{\rm (J${}_{#1,#2}$)}}

\def\cjx/#1,#2,#3/{{\rm (${\mathrm J}_{#2,#3}^{#1}$)}}
\def\cj/#1,#2/{\cjx/1,#1,#2/}

\def\cJ{{\mathcal J}}

\def\ilcomma/{\boldsymbol{,}}
\def\ilsymlp{\boldsymbol{(}}
\def\ilsymrp{\boldsymbol{)}}
\def\ilsymlc{\boldsymbol{[}}
\def\ilsymrc{\boldsymbol{]}}

\def\ilfont#1{\boldsymbol{#1}}

\def\ilfontX#1{\mathrm{\text{\texttt{\textbf{\large #1}}}}}
\def\ilfontX#1{{\text{\texttt{\textbf{\large \upshape #1}}}}}

\def\ilfont#1{\raisebox{-1.2pt}{\ilfontX{#1}}}
\def\ilfontC#1{\raisebox{-0.7pt}{\ilfontX{#1}}}

\def\ilcomma/{\ilfontC{,}}
\def\ilsymlp{\ilfont{(}}
\def\ilsymrp{\ilfont{)}}
\def\ilsymlc{\ilfont{[}\,}
\def\ilsymrc{\,\ilfont{]}}

\def\ilpp(#1,#2){\ilsymlp#1\ilcomma/ #2\ilsymrp}
\def\ilcp[#1,#2){\ilsymlc#1\ilcomma/ #2\ilsymrp}
\def\ilpc(#1,#2]{\ilsymlp#1\ilcomma/ #2\ilsymrc}
\def\ilcc[#1,#2]{\ilsymlc#1\ilcomma/ #2\ilsymrc}

\def\lL(#1,#2){\ilpp(-\infty, #1)}
\def\lR(#1,#2){\ilpp(#1, +\infty)}

\def\jumps#1{\mathfrak{J}(#1)}

\def\nom{{n\in\om}}
\def\sq#1#2{(#1)_{#2}}

\def\lexab{\lex\la\B}

\def\sS{\mathbf{S}}
\def\Rs{\mathbf{R}}
\def\Hs{\mathbf{H}}

\def\wS{\wt\sS}
\def\wcS{\wt\cS}

\def\oip#1{\cH_{+}(#1)}
\def\oim#1{\cH_{-}(#1)}
\def\oipm#1{\cH_{\pm}(#1)}

\def\mns{0_{\om_1}}
\def\mxs{1_{\om_1}}

\def\er{\cB_r}
\def\el{\cB_l}
\def\ea{\cB_*}

\def\ex{\cE}

\begin{document}

\begin{frontmatter}

\title{Homogeneous linearly ordered spaces}

\author{Anton Lipin}
\ead{tony.lipin@yandex.ru}
\address{N.N.\ Krasovskii Institute of Mathematics and Mechanics, Ural Branch of the Russian Academy of Sciences, Yekaterinburg, Russia}

\author{Evgenii\corref{cor} Reznichenko}
\ead{erezn@inbox.ru}
\address{Department of General Topology and Geometry, Mechanics and  Mathematics Faculty, M.~V.~Lomonosov Moscow State University, Leninskie Gory 1, Moscow, 199991 Russia}

\cortext[cor]{Corresponding author}



\begin{abstract}
Every compact subset of a homogeneous generalized ordered (GO) space has character at most $\om_1$ and cardinality at most $2^{\om_1}$; if such a subset has uncountable character, then the character of the whole space equals $\om_1$ and its $\pi$-character is countable.
We construct a homogeneous $\si$-compact linearly ordered space (LOTS) $\Hs$ containing a compact subset $\sS$ of cardinality $2^{\om_1}$ whose character is $\om_1$ at every point and whose weight and Souslin number are both $2^{\om_1}$; thus both bounds obtained are sharp.
We prove that a semitopological group that is a GO space is hereditarily paracompact; if, in addition, it is not a $P$-space, then it is submetrizable, has countable character, and its compact subsets are metrizable.
Every linearly ordered semitopological group (and, more generally, every GO semitopological group) is either metrizable or is a $P$-space; the same holds for topological groups.
We also show that in an order-homogeneous LOTS every compact subset is first countable.
\end{abstract}
\begin{keyword}
linearly ordered spaces
\sep
generalized ordered spaces
\sep
homogeneous spaces
\sep
compact spaces
\end{keyword}
\end{frontmatter}

\section{Introduction} \label{sec:intro}

A space $X$ is called \term{homogeneous} if for all $x,y\in X$ there exists a self-homeomorphism $f: X\to X$ such that $f(x)=y$.
The theory of homogeneous spaces is a large and active area of general topology \cite{Arhangelskii_vanMill}. Compact homogeneous spaces are of particular interest, but the structure of compact subsets of homogeneous spaces is also studied. It is proved in \cite{arh67,Arhangelskii2018,rezn2020} that compact subsets of an extremally disconnected space are finite. In \cite{rezn2020,Groznova2023ru} this result is extended to homogeneous subspaces of finite products of extremally disconnected spaces.
Under the continuum hypothesis ($\mathrm{CH}$), homogeneous subspaces of countable products of extremally disconnected spaces are metrizable \cite{rezn2020}.

In this paper we study linearly ordered topological spaces (LOTS), their subspaces (GO spaces), and topological properties of homogeneous LOTS and GO spaces.
A homogeneous GO space of pointwise countable type has countable character \cite[Corollary 29]{Arhangelskii2005}.
To establish this result, it suffices to show that compact subsets of such spaces have countable character.

The main technical tool of the paper is a characterization of points of a GO space via linearly ordered neighbourhood bases. We call a point $x$ of a space $X$ a \wob/-point if it has a neighbourhood base that is linearly ordered by inclusion; for GO spaces this condition turns out to be equivalent to the character and the $\pi$-character coinciding at that point (Proposition \ref{p:1+2}). Together with the spectrum $\sp_t(x,X)$ of the left and right tightnesses at a point, this characterization reduces the computation of cardinal invariants of homogeneous GO spaces to the analysis of the tightnesses $\cf_l$, $\cf_r$ (Section \ref{sec:cfgos}), which is what allows us to obtain the main results of the paper.

In a homogeneous GO space $X$, every compact subset has character at most $\om_1$, and if $X$ contains a compact subset of uncountable character, then the character of $X$ equals $\om_1$ and its $\pi$-character is countable (Theorem \ref{t:main:1}, Section \ref{sec:hcomp}). Both bounds in this theorem are sharp. There exists a homogeneous countably compact LOTS $X_\la$ (for a regular ordinal $\la$) with $\chi(X_\la)=\la$ and $\pi\chi(X_\la)=\om$ (Example \ref{e:3}); for $\la=\om_1$, the space $X_{\om_1}$ additionally embeds the transfinite line $\om_1+1$ (Example \ref{e:4}). The bound on the cardinality of a compact subset is also attained: there exists a homogeneous $\si$-compact LOTS $\Hs$ containing a compact subset $\sS$ whose character at every point equals $\om_1$, while its weight, Souslin number and cardinality all equal $2^{\om_1}$ (Example \ref{e:5}).

A separate line of results concerns the stronger condition of \emph{order-homogeneity} of a space (when homogeneity is realized by self-homeomorphisms that preserve or reverse the order). It is proved that in an order-homogeneous LOTS every compact subset is first countable (Theorem \ref{t:cssoh:1}); this is substantially stronger than the bound $\chi(K)\leq \om_1$ valid for merely homogeneous spaces.

Let $G$ be a semitopological group that is a GO space. Then $G$ is hereditarily paracompact (Theorem \ref{t:main:1+1}, which strengthens \cite[Corollary 2.7.]{BuzyakovaVural2014}), and compact subsets of $G$ are metrizable (Corollary \ref{с:main:1}).
If, in addition, $G$ is not a $P$-space, then $G$ is a first-countable submetrizable space (Theorem \ref{t:main:2}).
If, in addition, $G$ is a topological group, or $G$ is a LOTS semitopological group, then $G$ is metrizable (Theorem \ref{t:main:2+1} and Theorem \ref{t:main:3}).

This result cannot be generalized to monotonically normal groups --- there are many examples of separable, non-first-countable, non-metrizable monotonically normal topological groups \cite{GartsideReznichenko2000nmfs}. Monotonically normal paratopological groups are hereditarily paracompact \cite[Theorem 2.6]{BuzyakovaVural2014}, and hence compact subsets of such groups have countable tightness \cite[Theorem 3.5]{WilliamsZhou1998} (Theorem \ref{t:main:3+1}). Compact subsets of monotonically normal topological groups are metrizable \cite[Theorem 2a]{Gartside1998}.
LOTS topological groups can have arbitrarily large character (Proposition \ref{p:h:tg1}).


Papers \cite{BennettBurkeLutzer2013} considers GO and LOTS spaces equipped with algebraic structures weaker than a group structure.

Section \ref{sec:qe} discusses a number of open questions arising from our results: whether the LOTS $X_{\om_1}$ and $\Hs$ admit the structure of a right-topological group (by analogy with the well-known example of the compact LOTS `double arrow' \cite{Banakh2008}); whether the tightness of compact subsets of power homogeneous (monotonically normal) GO spaces is bounded by $\om_1$; and whether the theorems on hereditary paracompactness, the stratifiable property, and metrizability of compacta known for monotonically normal topological and paratopological groups extend to semitopological and quasitopological groups (in connection with the questions of Collins \cite[Problem 6]{Collins1996} and Shkarin \cite[Remarks (2)]{Shkarin2004}).

The paper is organized as follows. Section \ref{sec:defs} introduces the basic definitions and notation. Section \ref{sec:cfgos} studies cardinal invariants of GO spaces and introduces the \wob/-point technique. Section \ref{sec:hcomp} proves the main Theorem \ref{t:main:1} on compact subsets of homogeneous GO spaces, together with the results on semitopological GO groups. Sections \ref{sec:lex}--\ref{sec:blexhome} are devoted to constructions based on the lexicographic order on products of linearly ordered sets and on the space of binary sequences, which are later used to build examples. Section \ref{sec:exos} constructs homogeneous extensions of LOTS, and Section \ref{sec:bcs} uses them to build the space $\Hs$ with the large compact subset $\sS$. Section \ref{sec:qe} contains examples illustrating the sharpness of the main bounds, together with open questions.


\section{Terminology and definitions} \label{sec:defs}

\subsection{Linearly ordered sets and spaces}

On a linearly ordered set $(X,<)$ the family of \term{open intervals} $\ilpp(a,b)=\set{x\in X: a<x<b}$, $\ilpp(-\infty,b)=\set{x\in X: x<b}$ and $\ilpp(a,+\infty)=\set{x\in X: a<x}$ for $a,b\in X$ is a base for a topology $\tp_{<}$. We say that the topology $\tp_{<}$ is generated by the linear order $<$.
We shall also use notation analogous to that for real numbers, e.g. $\ilcp[a,b)=\set{x\in X: a\leq x <b}$, and similarly for other interval types.

A subset $M\subset X$ is called \term{convex} if $a,b\in M$ and $a<b$ imply $\ilcc[a,b]\subset M$.

A topological space $X$ is called a \term{linearly ordered topological space} (LOTS) if there is a linear order on $X$ that generates the topology of $X$.
Subspaces of a LOTS are called \term{generalized ordered} (GO) topological spaces.

In this paper we shall also use LOTS to mean a linearly ordered set $(X,<)$ equipped with the topology $\tp_<$.

A LOTS $X$ is called \term{bounded} if $X$ has a minimum and a maximum.


A map $f: X\to Y$ between linearly ordered sets is called \term{increasing} (\term{decreasing}) if $f(a)\leq f(b)$ ($f(a)\geq f(b)$) for $a,b\in X$ with $a<b$. A map $f$ is called \term{strictly increasing} (\term{strictly decreasing}) if $f(a)< f(b)$ ($f(a)> f(b)$) for $a,b\in X$ with $a<b$. A map $f$ is called \term{(strictly) monotone} if $f$ is a (strictly) increasing or (strictly) decreasing map. A map $f$ is called an \term{order isomorphism} if $f$ is an increasing bijection. A map $f$ is called an \term{order reversing isomorphism} if $f$ is a decreasing bijection.

Note that an order isomorphism and an order reversing isomorphism are homeomorphisms between the LOTS $X$ and $Y$.

For a linearly ordered set $(X,<)$ denote by $\rev X=(X,<_{\rev})$ the set $X$ with the reverse order, $x<_{\rev} y$ if $y<x$. An order reversing isomorphism $f: X\to Y$ is an order isomorphism of the LOTS $\rev X\to Y$ and $X\to \rev Y$.

By an \term{order automorphism} we mean an order isomorphism from $X$ to itself.
By an \term{order reversing automorphism} we mean an order reversing isomorphism from $X$ to itself.
Denote
\begin{align*}
\oip X &= \set{f: f\text{ is an order automorphism of }X},
\\
\oim X &= \set{f: f\text{ is an order reversing automorphism of }X},
\\
\oipm X &= \oip X \cup \oim X.
\end{align*}
If $\oim X\neq\es$, then $\oip X$ is a subgroup of index $2$ in $\oipm X$.
We call a LOTS $X$ \term{order-homogeneous} if $\oip X$ acts transitively on $X$, that is, for any $x,y\in X$ there is an order isomorphism $f: X\to X$ with $f(x)=y$.
We call a LOTS $X$ \term{$\pm$-order-homogeneous} if $\oipm X$ acts transitively on $X$, that is, for any $x,y\in X$ there is either an order isomorphism $f: X\to X$ or an order reversing isomorphism $f: X\to X$ with $f(x)=y$. Clearly,
\[
\text{order-homogeneity }\rarr \text{ $\pm$-order-homogeneity }\rarr \text{homogeneity.}
\]

Let $X$ and $Y$ be LOTS. On the product $X\times Y$ we consider the \term{lexicographic} order: $(a,b)<(c,d)$ if $a<b$, or $a=b$ and $c<d$. By the \term{sum} $X+Y$ of the LOTS $X$ and $Y$ we mean the disjoint union $X \sqcup Y$ with the order such that $X$ and $Y$ are order-embedded in $X+Y$ and $x<y$ whenever $x\in X$ and $y\in Y$.


A set $M\subset X$ is called \term{cofinal} if for every $x\in X$ there is $y\in M$ with $x\leq y$.
The \term{cofinality} $\cf X$ of a linearly ordered set $X$ is the least cardinality of a cofinal subset of $X$. If $X$ has a maximal element, then $\cf X=1$. If $X$ is the empty set, we set $\cf X=0$.
Note that $\cf X$ is a regular cardinal and there is an increasing cofinal sequence $\set{x_\al:\al < \cf X}\subset X$.
For $x\in X$ denote by $\cf_l(x,X)$ the cofinality of the set
$\lL(x,X)$  
and by $\cf_r(x,X)$ the cofinality of the set
$\lR(x,X)$  
in the reverse order, $\cf_r(x,X) = \cf_l(x,\rev X)=\cf\rev \lR(x,X)$.

A pair $(A,B)$ of subsets of $X$ is called a \term{cut} of the linearly ordered set $X$ if $A\cup B=X$, $A\cap B=\es$, and $a<b$ for all $a\in A$ and $b\in B$. A cut $(A,B)$ is called \term{proper} if $A\neq\es$ and $B\neq\es$. A proper cut is called a \term{gap} if $A$ has no maximum and $B$ has no minimum. A proper cut is called a \term{jump} if $A$ has a maximum and $B$ has a minimum. A jump is uniquely determined by the pair of points $a=\max A<b=\min B$, for which the interval $\ilpp(a,b)$ is empty. We call the point $a$ the \term{left jump point} and the point $b$ the \term{right jump point}. A left or right jump point will be called a \term{jump point}. A point $a$ is a left jump point if and only if $\cf_r(a,X)=1$, and a point $b$ is a right jump point if and only if $\cf_l(a,X)=1$. A whole jump is recovered from its left and right points: 
$A=\ilpc(-\infty,a]$ and $B=\ilcp[b,+\infty)$.
Denote 
\[
\jumps X = \set{(a,b)\in X\times X:  a<b,\   \ilpp(a,b)=\es}.
\]

\subsection{Topological spaces}

A space $X$ is called \term{monotonically normal} if for every point $x\in X$ and every neighbourhood $U$ of $x$ a neighbourhood $g(x,U)$ of $x$ is defined, in such a way that $g(x,U)\cap g(y,V)=\es$ whenever $x\notin V$ and $y\notin U$. Every GO space is monotonically normal \cite[Theorem 5.21]{gru1984}. We call a space $X$ \term{power homogeneous} if $X^\tau$ is homogeneous for some nonzero cardinal $\tau$. A family of open sets $\cU$ of a space $X$ is called an \term{outer base} of a set $F\subset X$ if $F\subset U$ for some $U\in\cU$, and for every neighbourhood $W\supset F$ of $F$ there is $U\in\cU$ with $U\subset W$.
A space $X$ has \term{pointwise countable type} if for every $x\in X$ there is a compact set $K\subset X$ with $x \in K$ that has a countable outer base. An outer base of a singleton $\sset x$ is called a base of the point $x$.
For a cardinal $\tau$, a set $G\subset X$ is of \term{type $G_\tau$} if $G=\bigcap \cU$ for some family of open sets $\cU$ of cardinality at most $\tau$. Sets of type $G_\om$ are called sets of \term{type $G_\de$}.
Denote by $\D_X=\set{(x,x):x\in X}$ the diagonal of $X\times X$.

Let $x\in X$ be a non-isolated point of a space $X$.
The \term{character} $\chi(x,X)$ of the point $x$ is the least cardinality of a base of $x$.
A family $\cU$ of nonempty open subsets of $X$ is called a \term{$\pi$-base} of a point $x\in X$ if for every neighbourhood $V$ of $x$ there is $U\in\cU$ with $U\subset V$.
The \term{$\pi$-character} $\pi\chi(x,X)$ of the point $x$ is the least cardinality of a $\pi$-base of $x$.
The \term{pseudocharacter} $\psi(x,X)$ of the point $x$ is the least cardinality of a family $\cU$ of neighbourhoods of $x$ with $\sset x=\bigcap\cU$.
Set $P(x,X)$ \cite{JuhaszNyikosSzentmiklossy2005} to be the least cardinality of a family $\cU$ of neighbourhoods of $x$ with $x\notin\Int \bigcap\cU$.
Set $T(x,A)=\min\set{|M|: M\subset A,\ x\in\cl M}$ for $x\in \cl A\subset X$,
$\sp_t(x,X)=\set{T(x,A): A\subset X\text{ and }x\in \cl A \setminus A}$,
$t(x,X)=\sup (\sp_t(x,X)\cup\sset \om)$,  
$\sp_t(X)=\bigcup\set{\sp_t(x,X): x\in X}$. 
Define the \term{weak tightness} $wt(x,X)=T(x,X\setminus \sset x)$ \cite{WilliamsZhou1998}.

If $x$ is an isolated point, set $\chi(x,X)=\pi\chi(x,X)=\psi(x,X)=t(x,X)=P(x,X)=wt(x,X)=\om$ and $\sp_t(x,X)=\es$.

We define the following cardinal invariants of a space $X$:
the \term{character}
$\chi(X)=\sup \set{\chi(x,X): x\in X}$,
the \term{$\pi$-character}
$\pi\chi(X)=\sup \set{\pi\chi(x,X): x\in X}$,
the \term{pseudocharacter}
$\psi(X)=\sup \set{\psi(x,X): x\in X}$,
$P(X)=\sup\set{P(x,X): x\in X}$, 
the \term{tightness}
$t(X)=\sup\set{t(x,X): x\in X}$, 
the \term{weak tightness}
$wt(X)=\sup\set{wt(x,X): x\in X}$,
the \term{diagonal number}
$\D(X)=\min\{\,\tau:$ the diagonal $\D_X$ in $X\times X$ has type  $G_\tau\,\}$
of the space $X$.

A sequence $\set{x_\al:\al<\tau}\subset X$ is called a \term{free sequence} if $\cl{\set{x_\al:\al<\la}}\cap \cl{\set{x_\al:\la \leq \al<\tau}}=\es$ for $\la<\tau$.

We call a point $x$ a \term{\wob/-point} if $x$ has a neighbourhood base $\cU$ that is linearly ordered by inclusion, that is, if $A,B\in\cU$, then either $A\subset B$ or $B\subset A$.
A \wob/-point has a well-ordered base: from a linearly ordered base one may take a cofinal well-ordered subfamily $\set{U_\al:\al<\la}\subset \cU$, where $\la=\cf \cU$ and $U_\be\subset U_\al$ for $\al<\be<\ga$.

Let $\tau$ be a cardinal. A point $x$ of a space $X$ is called a \term{$P_\tau$-point} if the intersection $\bigcap\cU$ of any family $\cU$, $|\cU|<\tau$, of neighbourhoods of $x$ is a neighbourhood of $x$. A space $X$ is called a \term{$P_\tau$-space} if every point of $X$ is a $P_\tau$-point. Every space is a $P_\om$-space. A $P_{\om_1}$-point is called a \term{$P$-point}. A $P_{\om_1}$-space is called a \term{$P$-space}.

Let $G$ be a group with a topology. The group $G$ is called \term{right-topological} if all right translations $\rho_g: G\to G$, $h\mapsto hg$ are continuous. The group $G$ is called \term{semitopological} if multiplication in $G$ is separately continuous. The group $G$ is called \term{quasitopological} if multiplication is separately continuous and taking the inverse in $G$ is continuous. The group $G$ is called \term{paratopological} if multiplication in $G$ is continuous. The group $G$ is called \term{topological} if multiplication and taking the inverse in $G$ are continuous.


\section{Cardinal invariants of GO spaces} \label{sec:cfgos}

\begin{proposition}\label{p:1}
Let $X$ be a Hausdorff space and $x\in X$.
\begin{enumerate}
\item
If $X$ is a homogeneous space, then $\sp_t(x,X)=\sp_t(X)$.
\item
If $x\in K\subset X$, then $\sp_t(x,K)\subset \sp_t(x,X)$ and $\sp_t(K)\subset \sp_t(X)$.
\item
If $X$ is an infinite compact space, then $\sp_t(X)$ contains all infinite regular cardinals not exceeding $t(X)$.
\item
If $X$ is an infinite compact space, then $\om\in \sp_t(X)$.
\item
If $x$ is a non-isolated point, then
\begin{align*}
wt(x,X)&=\min \sp_t(x,X),
&
t(x,X)&=\sup \sp_t(x,X).
\end{align*}
\item
$|\sp_t(x,X)|\leq 1$ if and only if $wt(x,X)=t(x,X)$.
\end{enumerate}
\end{proposition}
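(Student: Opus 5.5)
We prove the six items separately. Items (1), (2), (5) and (6) are essentially bookkeeping with the definitions, while (3) and (4) need a compactness argument.

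For (1), a homeomorphism $f\colon X\to X$ with $f(x)=y$ maps each $A$ with $x\in\cl A\setminus A$ onto $f(A)$ with $y\in \cl{f(A)}\setminus f(A)$. It also preserves $T$, since $M\subset A$ and $x\in\cl M$ iff $f(M)\subset f(A)$ and $y\in\cl{f(M)}$. Hence $\sp_t(x,X)=\sp_t(y,X)$ for all $x,y$, and so $\sp_t(x,X)$ equals the union $\sp_t(X)$.

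For (2), we use that for $M\subset A\subset K$ the closure of $M$ in $K$ is $\cl M\cap K$. So for $x\in K$ the conditions $x\in\cl M$ in $K$ and in $X$ coincide, and $T(x,A)$ computed in $K$ equals $T(x,A)$ computed in $X$. Also $x\in\cl A\setminus A$ holds in $K$ iff it holds in $X$. This gives $\sp_t(x,K)\subset\sp_t(x,X)$, and taking unions over $x\in K$ gives the second inclusion.

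For (5) and (6), let $x$ be non-isolated. Put $A_0=X\setminus\sset x$. Then $x\in\cl{A_0}\setminus A_0$, so $wt(x,X)=T(x,A_0)\in\sp_t(x,X)$. Every admissible $A$ satisfies $A\subset A_0$, hence $T(x,A)\ge T(x,A_0)$, and therefore $wt(x,X)=\min\sp_t(x,X)$. Since $X$ is Hausdorff, finite sets are closed, so every $T(x,A)$ with $x\notin A$ is infinite. Thus $\sp_t(x,X)$ is a nonempty set of infinite cardinals, and $\sup(\sp_t(x,X)\cup\sset\om)=\sup\sp_t(x,X)$. For (6), if $x$ is isolated then $\sp_t(x,X)=\es$ and $wt=t=\om$ by convention, so both sides of the equivalence hold. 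If $x$ is non-isolated, a nonempty set of cardinals has $\min=\sup$ iff it is a singleton, and (5) finishes the proof.

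For (3) and (4), the key device is a free sequence. Fix an infinite regular $\kappa\le t(X)$ and first obtain a free sequence $\set{x_\al:\al<\kappa}$ in $X$.
\begin{itemize}
\item If $\kappa=\om$, this works in any infinite Hausdorff space: choose an infinite countable discrete set.
\item For general $\kappa$ we invoke the Arhangel'skii--\v{S}apirovski\u{\i} theorem: in a compact space, $t(X)\ge\kappa$ for regular $\kappa$ iff $X$ contains a free sequence of length $\kappa$. This is the step I expect to be the main obstacle, since it is the only nonelementary input, and the case $\kappa=t(X)$ with the supremum not attained genuinely needs the theorem rather than just $t(X)=\sup F(X)$.
\end{itemize}
Given the free sequence, compactness yields a point $x\in\bigcap_{\la<\kappa}\cl{\set{x_\al:\la\le\al<\kappa}}$; for $\kappa=\om$ we instead take $x$ to be an accumulation point of the discrete set. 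Put $A=\set{x_\al:\al<\kappa}$. Freeness gives $x\ne x_\be$, because $x_\be\in\cl{\set{x_\al:\al\le\be}}$, which misses $\cl{\set{x_\al:\al>\be}}\ni x$. So $x\in\cl A\setminus A$. Moreover, any $M\subset A$ with $|M|<\kappa$ is bounded by some $\la<\kappa$ by regularity, and $\cl{\set{x_\al:\al<\la}}$ misses $x$. Hence $T(x,A)=\kappa\in\sp_t(X)$. Taking $\kappa=\om$ gives (4).
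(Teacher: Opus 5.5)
Your arguments for (1), (2), (4), (5) and (6) are correct, and more explicit than the paper, which calls them obvious. For (3) you follow the paper's route: a free sequence from Arhangel'skii's theorem, then a point in the intersection of the tail closures (this is the paper's complete accumulation point of an initial segment), giving $T(x,A)=\kappa$.

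The problem is exactly at the step you flagged. The theorem as you quote it (for regular $\kappa$, $t(X)\ge\kappa$ iff there is a free sequence of length $\kappa$) is not true. What holds for compact $X$ is that $t(X)>\tau$ yields a free sequence of length $\tau^+$. Equivalently, for regular $\kappa$, a free sequence of length $\kappa$ exists iff $T(x,A)\ge\kappa$ for some $x\in\cl A\setminus A$. This covers every regular $\kappa<t(X)$, and also $\kappa=t(X)$ when $t(X)$ is a successor cardinal or is attained. It does not cover a weakly inaccessible $\kappa=t(X)$ with the supremum not attained. In that case no argument can succeed, because (3) itself fails.

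Here is a counterexample. Let $\kappa$ be weakly inaccessible, and let $X$ be the one-point compactification of the topological sum of the ordinal spaces $\lambda+1$ over infinite regular $\lambda<\kappa$.
\begin{enumerate}
\item Each summand is clopen in $X$, so a point of $\lambda+1$ contributes only its cofinality to $\sp_t(X)$. The point $\lambda$ itself contributes $\lambda$.
\item At the added point $\infty$, any $A$ with $\infty\in\cl A$ must meet infinitely many summands. A countable subset of $A$ meeting infinitely many summands already accumulates at $\infty$, so $T(\infty,A)=\omega$.
\end{enumerate}
Thus $\sp_t(X)$ is exactly the set of infinite regular cardinals below $\kappa$. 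So $t(X)=\kappa$ while $\kappa\notin\sp_t(X)$, and by your own argument $X$ has no free sequence of length $\kappa$.

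The paper's proof has the same defect: it asserts a free sequence of length $\tau=t(X)$, which need not exist. What both arguments actually prove is a corrected form of (3). If $X$ is infinite and compact, then $\sp_t(X)$ contains every infinite regular $\kappa$ with $\kappa\le\mu$ for some $\mu\in\sp_t(X)$. In particular it contains every infinite regular $\kappa<t(X)$, and also $t(X)$ whenever $t(X)$ is $\omega$ or a successor cardinal. The only regular value that can be missed is a weakly inaccessible $t(X)$ that is not attained.

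This weaker form is all that Theorem \ref{t:main:1} uses. There $\sp_t(K)$ has at most two elements, each a regular cardinal, and an element $\ge\omega_2$ would force $\omega,\omega_1,\omega_2\in\sp_t(K)$.
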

\begin{proof}
(1) and (2) are obvious.

(3) Let $\tau=t(X)$. Since $X$ is compact, there is a free sequence $\set{x_\al:\al<\tau}\subset X$ \cite[Theorem 7.12]{Hodel1984handbook}.
Let $\mu\leq\tau$ be an infinite regular cardinal and let $y$ be a complete accumulation point of the set $A=\set{x_\al:\al<\mu}$. Then $\mu=T(y,A)\in \sp_t(X)$.

(4) There is a countable $A\subset X$ and $y\in \cl A\setminus A$. Then $\om=T(y,A)\in \sp_t(X)$.

(5) is obvious. (6) follows from (5).
\end{proof}

The following diagram shows the relations between the pointwise cardinal invariants. An arrow $A\to B$ means that $A(x,X)\geq B(x,X)$ for every Hausdorff space $X$ and $x\in X$.
\[
\begin{tikzcd}
\chi \ar[r] \ar[dd] \ar[rd]	&	\psi \ar[r]	&	P 
\\
 & \pi\chi \ar[rd]
\\
t \ar[rr]	&&	wt \ar[uu]
\end{tikzcd}
\]


\begin{proposition}\label{p:1-1}
Let $X$ be a Hausdorff space, $x\in X$.
\begin{enumerate}
\item
If $x$ is a \wob/-point, then
\[
\chi(x,X)=\pi\chi(x,X)=\psi(x,X)=t(x,X)=wt(x,X)=P(x,X).
\]
\item
If $\chi(x,X)=P(x,X)$, then $x$ is a \wob/-point.
\end{enumerate}
\end{proposition}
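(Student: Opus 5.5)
For (1), fix a well-ordered base $\set{U_\al:\al<\la}$ at $x$ with $U_\be\subset U_\al$ for $\al<\be$, where $\la=\cf\cU$ is a regular cardinal. If $x$ is isolated, all the invariants equal $\om$ by convention, so assume $x$ is non-isolated; then we may take $\la$ infinite and every $U_\al$ different from $\sset x$. The plan is to show that every invariant in the chain is squeezed between $\la$ and $\la$. The upper bound is immediate, since $\chi(x,X)\leq\la$ and the diagram gives $\chi\geq\psi\geq P$, $\chi\geq\pi\chi\geq wt$ and $\chi\geq t\geq wt$. So it suffices to prove the two lower bounds $P(x,X)\geq\la$ and $wt(x,X)\geq\la$.

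For $P(x,X)\geq\la$, let $\cV$ be a family of neighbourhoods of $x$ with $|\cV|<\la$. For each $V\in\cV$ choose $\al_V$ with $U_{\al_V}\subset V$. By regularity of $\la$, $\be=\sup_V\al_V<\la$. Hence $U_\be\subset\bigcap\cV$, so $x\in\Int\bigcap\cV$. This gives $P(x,X)\geq\la$; when $\la=\om$ the bound holds trivially.

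For $wt(x,X)\geq\la$, let $M\subset X\setminus\sset x$ with $|M|<\la$. By Hausdorffness, for each $m\in M$ there is some $\al_m$ with $m\notin U_{\al_m}$, since a point $m\neq x$ lies outside some neighbourhood of $x$, which in turn contains some $U_\al$. Again by regularity, $\be=\sup\al_m<\la$, and $U_\be\cap M=\es$. Thus $x\notin\cl M$, and so $T(x,X\setminus\sset x)\geq\la$. Combining with the upper bounds, all six invariants equal $\la$.

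For (2), let $\ka=\chi(x,X)=P(x,X)$. If $\ka=\om$, the claim follows from the standard fact that a countable base $\set{V_n}$ yields the decreasing base $\set{V_0\cap\dots\cap V_n}$, which is linearly ordered. In general, fix a base $\set{V_\al:\al<\ka}$ and define $W_\al=\Int\bigcap_{\be\leq\al}V_\be$. The key step is that $|\set{V_\be:\be\leq\al}|<\ka$, so by the definition of $P(x,X)=\ka$ we have $x\in W_\al$; this use of $P$ is the crux. The $W_\al$ are open neighbourhoods of $x$, decreasing in $\al$, and $W_\al\subset V_\al$. Hence they form a linearly ordered base, and $x$ is a \wob/-point.

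The only delicate point is the cardinal bookkeeping in (2): one must note that $\ka$ is infinite and that initial segments of the indexing ordinal $\ka$ have cardinality $<\ka$. This holds because $\ka$ is a cardinal indexed by its initial ordinal.
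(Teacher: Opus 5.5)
Your proof is correct. Part (2) is the paper's argument; the paper also intersects with a witness family for $P(x,X)$, which is unnecessary, and your explicit choice $W_\al=\Int\bigcap_{\be\leq\al}V_\be$ makes clear why each of these open sets contains $x$.

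For (1) you run the squeeze from the other end. The paper sets $\la=P(x,X)$ and takes a witness family $\{V_\al:\al<\la\}$ with $x\notin\Int\bigcap_{\al<\la}V_\al$. It chooses members $U_\al\subset V_\al$ of the linearly ordered base and observes that $\{U_\al:\al<\la\}$ is already a base. The reason is linearity: a base member containing none of the $U_\al$ would lie inside every $U_\al$, hence inside $\bigcap_{\al<\la}V_\al$, and would put $x$ in its interior. This gives $\chi(x,X)\leq P(x,X)$ using only linearity, with no appeal to cofinality or regularity.

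You instead pass to a well-ordered base of regular length $\la=\cf\cU$ and prove $P(x,X)\geq\la\geq\chi(x,X)$ by the bounded-supremum argument. This has the small bonus of identifying the common value as the cofinality of the linearly ordered base.

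Your separate check that $wt(x,X)\geq\la$ is redundant, since the diagram already gives $wt\geq P$, but it does no harm.
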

\begin{proof}
(1) Let $\la=P(x,X)$. It suffices to show that $\chi(x,X)\leq \la$.
Let $\cU$ be a linearly ordered base of $x$ and let $\set{V_\al:\al<\la}$ be a system of neighbourhoods of $x$ with $x\notin \Int \bigcap_{\al<\la}V_\al$. Take $\cO=\set{U_\al:\al<\la}\subset \cU$ such that $U_\al\subset V_\al$ and $U_\al\supset U_\be$ for $\al<\be<\la$. Then $\cO$ is a base at $x$.

(2) Let $\la=P(x,X)$. Let $\set{W_\al:\al<\la}$ be a base of $x$ and let $\set{V_\al:\al<\la}$ be a system of neighbourhoods of $x$ with $x\notin \Int \bigcap_{\al<\la}V_\al$. 
Let $\cU=\set{U_\al:\al<\la}$ be a family of open subsets of $X$ such that $U_\al\subset W_\al\cap V_\al$ and $U_\al\supset U_\be$ for $\al<\be<\ga$. Then $\cU$ is a linear base at $x$.
\end{proof}

\begin{proposition}\label{p:1+0}
Let $Y$ be a LOTS and $x\in X\subset Y$.
On $X$ we consider the order inherited from $Y$ and the subspace topology of the space $Y$.
Denote $L_*=X\cap \lL(x,Y)$, $R_*=X\cap \lR(x,Y)$, $L=L_*\cup \sset x$, $R=R_*\cup \sset x$.
\begin{enumerate}
\item
The point $x$ in $L$ is a \wob/-point. If $x\in \cl{L_*}$, then $\chi(x,L)=\cf_l(x,X)=\cf_l(x,Y)$.
\item
The point $x$ in $R$ is a \wob/-point. If $x\in \cl{R_*}$, then $\chi(x,R)=\cf_r(x,X)=\cf_r(x,Y)$.
\item
The point $x$ in $X$ is a \wob/-point if and only if one of the following conditions holds:
\begin{enumerate}
\item
$x\notin \cl{L_*}$;
\item
$x\notin \cl{R_*}$;
\item
$x\in \cl{L_*}\cap \cl{R_*}$ and $\cf_l(x,X)=\cf_r(x,X)$.
\end{enumerate}
\item
\begin{enumerate}
\item
$\sp_t(x,X)=\es$ if 
$x\notin \cl{L_*}$ and $x\notin \cl{R_*}$, that is, $x$ is an isolated point of $X$;
\item
$\sp_t(x,X)=\sset{\cf_l(x,X)}$ 
if $x\in \cl{L_*}$ and $x\notin \cl{R_*}$;
\item
$\sp_t(x,X)=\sset{\cf_r(x,X)}$ 
if $x\notin \cl{L_*}$ and $x\in \cl{R_*}$;
\item
$\sp_t(x,X)=\sset{\cf_l(x,X),\cf_r(x,X)}$ 
if $x\in \cl{L_*}$ and $x\in \cl{R_*}$.
\end{enumerate}
\end{enumerate}
\end{proposition}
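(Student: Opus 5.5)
The plan is to work with explicit one-sided interval bases at $x$ and then read off each invariant.

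For (1), take a neighbourhood $U$ of $x$ in $L$. Since $X$ carries the subspace topology of the LOTS $Y$, $U$ contains $X\cap\ilpp(a,b)\ni x$ for some $a<x<b$ in $Y\cup\sset{\pm\infty}$. Intersecting with $L$ leaves $L\cap\ilpc(a,x]$, so the family $\set{L\cap \ilpc(a,x]: a<x}$, $a\in Y\cup\sset{-\infty}$, is a base of $x$ in $L$. This family is linearly ordered by inclusion, so $x$ is a \wob/-point of $L$.

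Now assume $x\in\cl{L_*}$. Every $a<x$ in $Y$ lies below some point of $L_*$, so $L_*$ is cofinal in $\lL(x,Y)$. Because cofinality is preserved under passing to a cofinal subset, $\cf_l(x,X)=\cf L_*=\cf_l(x,Y)$; I read $\cf_l(x,X)$ as $\cf L_*$. Let $\la=\cf L_*$, which is infinite since $x$ is not isolated in $L$, and choose an increasing cofinal sequence $\set{a_\al:\al<\la}\subset L_*$. Then $\set{L\cap\ilpc(a_\al,x]}$ is a base, so $\chi(x,L)\le\la$. For the reverse inequality, $\chi(x,L)\ge t(x,L)\ge T(x,L_*)$ by Proposition~\ref{p:1-1}(1). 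Any $M\subset L_*$ with $|M|<\la$ is bounded in $L_*$ by regularity of $\la$, hence $x\notin\cl M$. This gives $T(x,L_*)=\la$. Part (2) is (1) applied to $\rev Y$.

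For (4), every $A\subset X$ with $x\in\cl A\setminus A$ splits as $A=(A\cap L_*)\cup(A\cap R_*)$, and then $T(x,A)=\min$ of the $T$-values of those pieces that accumulate at $x$. For a set $A'\subset L_*$ with $x\in\cl{A'}$, the set $A'$ is cofinal in $L_*$, so $T(x,A')=\cf A'=\cf_l(x,X)$ by the argument in (1). This shows that $\sp_t(x,X)\subset\set{\cf_l,\cf_r}$, keeping only the sides where $x$ is in the closure. Conversely, $A=L_*$ and $A=R_*$ realize these values, so all four cases of (4) follow.

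For (3), the base of $x$ in $X$ consists of the sets $X\cap\ilpp(a,b)$. In case (a), $x\notin\cl{L_*}$, so some $a_0<x$ has $X\cap\ilpc(a_0,x)=\es$, and the sets $X\cap\ilpp(a_0,b)$ form a linear base. Case (b) is symmetric. In case (c), take increasing and decreasing cofinal sequences $a_\al\to x$ and $b_\al\to x$ indexed by the common regular $\la$. The sets $X\cap\ilpp(a_\al,b_\al)$ are then decreasing and form a base. For the converse, suppose $x\in\cl{L_*}\cap\cl{R_*}$ and $x$ is a \wob/-point. By (4) and Proposition~\ref{p:1}(5), $wt(x,X)=\min\sset{\cf_l,\cf_r}$ and $t(x,X)=\max\sset{\cf_l,\cf_r}$. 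Proposition~\ref{p:1-1}(1) forces $wt=t$, hence $\cf_l=\cf_r$.

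The main obstacle is the careful bookkeeping in (4) when a set accumulates from both sides, together with the identification $\cf_l(x,X)=\cf_l(x,Y)$ under the hypothesis $x\in\cl{L_*}$; the rest is routine.
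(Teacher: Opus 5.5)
Your proof is correct. For items (1), (2) and the ``if'' half of (3) it follows the paper: you use the one-sided interval bases $L\cap(a,x]$ at $x$, and in case (c) the nested intervals $X\cap(a_\al,b_\al)$ built from an increasing and a decreasing sequence of the common regular length $\la$.

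The genuine difference is in the ``only if'' half of (3).
\begin{itemize}
\item The paper argues directly. From a WOB-point with $x\in\cl{L_*}\cap\cl{R_*}$ it takes a linearly ordered base of intervals $(x_\al,y_\al)\cap X$, $\al<\la$, with $\la$ regular, and reads off $\cf_l(x,X)=\cf_r(x,X)=\la$ from the endpoints. It asserts this base exists without showing how the intervals can be chosen nested.
\item You prove (4) first. You then combine $wt(x,X)=\min\operatorname{sp}_t(x,X)$ and $t(x,X)=\sup\operatorname{sp}_t(x,X)$ (Proposition \ref{p:1}(5)) with $wt=t$ at WOB-points (Proposition \ref{p:1-1}(1)). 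Neither proposition depends on Proposition \ref{p:1+0}, so there is no circularity.
\end{itemize}
Your route avoids all manipulation of bases and reuses the invariant machinery. The paper's route keeps (3) purely order-theoretic and independent of (4).

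You also supply arguments the paper calls obvious or leaves implicit:
\begin{itemize}
\item $L_*$ is cofinal in $(-\infty,x)_Y$, which gives $\cf_l(x,X)=\cf_l(x,Y)$.
\item The lower bound $\chi(x,L)\ge T(x,L_*)=\la$.
\item For (4), $T(x,A)$ is the minimum of $T$ over the pieces $A\cap L_*$, $A\cap R_*$ that accumulate at $x$, and each such value equals the corresponding one-sided cofinality.
\end{itemize}

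One trivial slip: in (1), a neighbourhood of $x$ in $L$ contains $L\cap(a,b)$, not $X\cap(a,b)$. Your next sentence already uses the correct set.
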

\begin{proof}
Let us prove (1).
Let $\la=\cf_l(x,X)$ and let $\set{x_\al:\al<\la}$ be a set cofinal in $L_*$. Then
the family $\set{\ilpc(x_\al,x]:\al<\la}$ forms a base of $x$ in $\ilpc(-\infty, x]$, and
the family $\set{\ilpc(x_\al,x]\cap X:\al<\la}$ forms a base of $x$ in $L$.

Item (2) follows from (1).

Let us prove (3). ($\larr$) 
Item (a) follows from (2). Item (b) follows from (1).
Let us prove (c). Let $\la=\cf_l(x,X)$, let $\set{x_\al:\al<\ga}\subset L_*$ be a sequence increasing to $x$, and let $\set{y_\al:\al<\ga}\subset R_*$ be a sequence decreasing to $x$.
Then $\set{\ilpp(x_\al,y_\al)\cap X:\al<\ga}$ is a linearly ordered base at the point $x$.

($\rarr$) If $x\notin \cl{L_*}$ or $x\notin \cl{R_*}$, then either (a) or (b) holds.
Consider the case $x\in \cl{L_*}\cap \cl{R_*}$. There is a linearly ordered base $\set{\ilpp(x_\al,y_\la)\cap X:\al<\la}$ of $x$, where $\la$ is some regular cardinal and $x_\al\in L_*$, $y_\al\in R_*$ for $\al<\la$. Then $\la=\cf_l(x,X)=\cf_r(x,X)$, and (d) holds.

Item (4) is obvious.
\end{proof}

The following statement follows from Proposition \ref{p:1+0}(3).

\begin{proposition}\label{p:1+0.5}
Let $X$ be a LOTS and $x\in X$. If $x$ is a minimum, a maximum, or a jump point, then $x$ is a \wob/-point.
\end{proposition}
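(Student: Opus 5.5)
We apply Proposition \ref{p:1+0}(3) with $Y=X$, so that $L_*=\lL(x,X)$ and $R_*=\lR(x,X)$. The task is to show that in each of the three listed situations (minimum, maximum, jump point) one of conditions (a) or (b) of that item holds. In all cases we only need that $x$ is not in the closure of one of the open rays determined by $x$.

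If $x$ is the minimum of $X$, then $L_*=\es$, so $x\notin\cl{L_*}$, and condition (a) holds. Symmetrically, if $x$ is the maximum, then $R_*=\es$ and condition (b) holds.

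Suppose $x$ is a left jump point, i.e. there is $b$ with $x<b$ and $\ilpp(x,b)=\es$. Then $\ilpp(-\infty,b)=\ilpc(-\infty,x]$ is an open neighbourhood of $x$ disjoint from $R_*=\lR(x,X)$. Hence $x\notin\cl{R_*}$ and condition (b) holds. If $x$ is a right jump point with partner $a<x$ and $\ilpp(a,x)=\es$, then the open set $\ilpp(a,+\infty)=\ilcp[x,+\infty)$ misses $L_*$, so $x\notin\cl{L_*}$ and condition (a) holds.

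In every case Proposition \ref{p:1+0}(3) yields that $x$ is a \wob/-point. There is no real obstacle here. The only point requiring care is to use the LOTS topology of $X$ itself, so that the rays used above are genuinely open. This is exactly where the hypothesis that $X$ is a LOTS rather than a general GO space enters: in a GO space a jump point need not be isolated from the side, but the argument above never needs isolation, only that the relevant ray is open.
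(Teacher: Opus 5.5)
Your proof is correct and is exactly the paper's argument: the paper simply notes that the statement follows from Proposition~\ref{p:1+0}(3), and you verify condition (a) or (b) of that item case by case. Your closing remark about GO spaces is mistaken but does not affect the proof: if $x$ is a left jump point of a GO space $X\subset Y$ with successor $b$, then $\{z\in X: z<b\}=\{z\in X: z\le x\}$ is the trace on $X$ of an open ray of $Y$, hence open in $X$. So the same argument works verbatim for GO spaces, and openness of that ray is precisely the one-sided isolation $x\notin\cl{R_*}$, not something weaker than it.
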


\begin{proposition}\label{p:1+1}
Let $X$ be a GO space and $x\in X$.
Then  
\begin{align*}
\pi\chi(x,X) &= wt(x,X)= P(x,X),
&
\chi(x,X) &= \psi(x,X) = t(x,X).
\end{align*}
If $x$ is not an isolated point of $X$, then
\begin{align*}
\min \sp_t(x,X) &= \pi\chi(x,X),
&
\max \sp_t(x,X) &= \chi(x,X).
\end{align*}

\end{proposition}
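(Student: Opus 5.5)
We represent $X$ as a subspace of a LOTS $Y$ carrying the induced order and keep the notation $L_*,R_*,L,R$ of Proposition \ref{p:1+0}. The isolated case follows from the conventions: every invariant equals $\om$. So let $x$ be non-isolated. Set $\la=\cf_l(x,X)$ if $x\in\cl{L_*}$ and $\mu=\cf_r(x,X)$ if $x\in\cl{R_*}$. By Proposition \ref{p:1+0}(4), $\sp_t(x,X)$ is the set of those of $\la,\mu$ that are defined. By Proposition \ref{p:1}(5), $wt(x,X)=\min\sp_t(x,X)$ and $t(x,X)=\sup\sp_t(x,X)=\max\sp_t(x,X)$, because the spectrum has at most two elements. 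Everything therefore reduces to the two equalities
\[
\chi(x,X)=\psi(x,X)=\max\sp_t(x,X),\qquad \pi\chi(x,X)=P(x,X)=\min\sp_t(x,X).
\]

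For the first equality, the upper bound comes from bases. If $x$ is one-sided, Proposition \ref{p:1+0}(1),(2) gives a base of size $\la$ (or $\mu$), since $x$ is isolated from the other side. If $x$ is two-sided, take cofinal sequences $\{x_\al:\al<\la\}$ in $L_*$ and $\{y_\be:\be<\mu\}$ in $R_*$. The sets $\ilpp(x_\al,y_\be)\cap X$ then form a base of size $\max(\la,\mu)$. For the lower bound we use the diagram: $\chi\ge\psi$ and $\chi\ge t=\max\sp_t$. So it suffices to show $\psi(x,X)\ge\max(\la,\mu)$. Suppose $\la$ is the larger one, $\la>\om$, and a family $\cU$ with $|\cU|<\la$ satisfies $\bigcap\cU=\{x\}$. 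Each $U\in\cU$ contains some interval $\ilpc(a_U,x]\cap X$ with $a_U\in L_*$. Since $\la=\cf L_*$ is regular and $|\cU|<\la$, the points $a_U$ are bounded below $x$ by some $a\in L_*$. Because $x$ is not the maximum of $L_*\cup\{x\}$ approached in fewer than $\la$ steps, there is a point of $X$ in $(a,x)$, and that point lies in $\bigcap\cU$, a contradiction. For $\la=\om$ the bound $\psi\ge\om$ is trivial by convention, since $x$ is non-isolated.

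For the second equality, $wt\le\pi\chi$ is part of the diagram. I plan to close the chain by showing $\pi\chi(x,X)\le\min(\la,\mu)$ and $P(x,X)\ge wt(x,X)$, which together with $\pi\chi\ge wt$ and $P\le\pi\chi$ (again the diagram) give equality. For the $\pi$-base, suppose $\la\le\mu$ with $x\in\cl{L_*}$, and take the increasing sequence $x_\al$, $\al<\la$. The nonempty sets $\ilpp(x_\al,x)\cap X$ form a $\pi$-base at $x$ of size $\la$, provided these sets are nonempty. If they are not, then $x_{\al+1}$ lies in the set and we can use $\ilpp(x_\al,x_{\al+2})\cap X$ instead, shrinking further if needed via a jump point. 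This is the fiddly part; the cleanest choice is simply to pick any point of $X$ strictly between $x_\al$ and $x$ and use intervals around it.

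The main obstacle is $P(x,X)\ge\min(\la,\mu)$. Take a family $\cU$ of neighbourhoods with $|\cU|<\min(\la,\mu)$ (on the sides where $x$ is a limit). Regularity of $\la$ and $\mu$ gives bounds $a<x<b$ with $a\in L_*$ and $b\in R_*$ such that every $U\in\cU$ contains $\ilpp(a,b)\cap X$. Hence $\bigcap\cU$ contains this neighbourhood and $x\in\Int\bigcap\cU$. On a side where $x$ is not a limit point, the intervals can be chosen uniformly, since one fixed one-sided neighbourhood works for all $U$. This shows $P(x,X)\ge\min\sp_t(x,X)=wt(x,X)$, which closes the chain and proves all the equalities.
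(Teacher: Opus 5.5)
Your proof is correct, but it takes a different route from the paper's.

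The paper relies on the \wob/-point machinery. If $|\sp_t(x,X)|=1$, Proposition \ref{p:1+0}(3) makes $x$ a \wob/-point, and Proposition \ref{p:1-1} then forces all six invariants to coincide. If $\sp_t(x,X)=\sset{\la,\rho}$ with $\la<\rho$, the paper notes that $x$ is a \wob/-point of each one-sided piece $L$ and $R$ and applies Proposition \ref{p:1-1} there. It then reads off the values in $X$ as the smaller of the values in $L$ and $R$ (for $\pi\chi$, $wt$, $P$) or the larger (for $\chi$, $\psi$, $t$); this gluing step is left implicit.

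You never use \wob/-points. Your steps are:
\begin{itemize}
\item $wt$ and $t$ come directly from Proposition \ref{p:1}(5).
\item $\chi(x,X)\le\max\sp_t(x,X)$ and $\pi\chi(x,X)\le\min\sp_t(x,X)$ come from explicit bases and $\pi$-bases built on cofinal sequences.
\item $\psi(x,X)\ge\max\sp_t(x,X)$ and $P(x,X)\ge\min\sp_t(x,X)$ come from bounding fewer than $\cf$-many endpoints.
\item The chains close via the general inequalities $\chi\ge\psi$, $\chi\ge t$ and $\pi\chi\ge wt\ge P$.
\end{itemize}
Your route is more elementary and in effect supplies the $L$/$R$ gluing the paper glosses over. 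The paper's route is shorter and reuses Proposition \ref{p:1-1}, which it needs anyway for the \wob/ characterization in Proposition \ref{p:1+2}.

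Some side remarks in your write-up should be tidied:
\begin{itemize}
\item The sets $\ilpp(x_\al,x)\cap X$ are automatically nonempty because $L_*$ has no maximum, so the ``fiddly'' discussion is unnecessary.
\item In the $\psi$ argument, a point of $X$ lies above $a$ simply because $L_*$ has no maximum; the sentence about ``fewer than $\la$ steps'' should be replaced by that.
\item For non-isolated $x$, $\psi(x,X)\ge\om$ is not a convention. It holds because a finite intersection of neighbourhoods is a neighbourhood, hence infinite; your bounding argument already covers $\la=\om$.
\item In the one-sided case, the base from Proposition \ref{p:1+0}(1) becomes a base in $X$ only after intersecting with a neighbourhood of $x$ missing $R_*$.
\end{itemize}
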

\begin{proof}
The proposition is obvious if $x$ is an isolated point. Consider the case when $x$ is not an isolated point. Then $\sp_t(x,X)\neq\es$.

If $|\sp_t(x,X)|=1$, then $\min \sp_t(x,X)=\max \sp_t(x,X)$, and Proposition \ref{p:1+0}(3) implies that $x$ is a \wob/-point. The required equality then follows from Proposition \ref{p:1-1} and Proposition \ref{p:1+0}(1,2).

Consider the case $|\sp_t(x,X)|>1$.
Let $Y$ be a LOTS in which $X$ is embedded.
On $X$ we consider the order inherited from $Y$.
Denote $L_*=X\cap \lL(x,Y)$, $R_*=X\cap \lR(x,Y)$, $L=L_*\cup \sset x$, $R=R_*\cup \sset x$, $\la=\cf_l(x,X)$, $\rho=\cf_r(x,X)$.
By Proposition \ref{p:1+0}(4), $x\in \cl{L_*}\cap \cl{R_*}$, $|\sp_t(x,X)|=2$, and $\sp_t(x,X)=\sset{\la,\rho}$. Without loss of generality $\la<\rho$. Then $\la=\min \sp_t(x,X)$ and $\rho=\max \sp_t(x,X)$. By Proposition \ref{p:1+0}(1,2), $x$ is a \wob/-point in $L$ and in $R$.
By Proposition \ref{p:1-1}, $\pi\chi(x,L) = wt(x,L)= P(x,L)=\la$ and $\pi\chi(x,R) = wt(x,R)= P(x,R)=\rho>\la$. Hence $\pi\chi(x,X) = wt(x,X)= P(x,X)=\la$.

By Proposition \ref{p:1-1}, $\chi(x,L) = t(x,L)= \psi(x,L)=\la<\rho$ and $\chi(x,R) = t(x,R)= \psi(x,R)=\rho$. Hence $\chi(x,X)= \psi(x,X) = t(x,X) = \rho$.
\end{proof}

The following statement follows from Propositions \ref{p:1+0} and \ref{p:1+1}, and is also easily verified directly.

\begin{proposition}\label{p:1+1.5}
Let $X$ be a LOTS and $x\in X$. Then
\begin{align*}
\pi\chi(x,X) &= \om + \min \cf_l(x,X), \cf_r(x,X),
\\
\chi(x,X) &=  \om + \max \cf_l(x,X), \cf_r(x,X).
\end{align*}
\end{proposition}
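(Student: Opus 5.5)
The plan is to derive the formulas from Propositions \ref{p:1+0} and \ref{p:1+1}, taking $Y=X$ there so that $L_*=\lL(x,X)$ and $R_*=\lR(x,X)$. The work is a case analysis on whether $x$ lies in the closure of its left and right rays. In a LOTS these closure conditions can be read off from the cofinalities:
\begin{itemize}
\item $x\in\cl{L_*}$ iff $L_*\ne\es$ and $L_*$ has no maximum, i.e.\ iff $\cf_l(x,X)$ is an infinite (regular) cardinal;
\item $x\notin\cl{L_*}$ iff $\cf_l(x,X)\in\sset{0,1}$, namely $0$ if $x$ is the minimum and $1$ if $x$ is a right jump point.
\end{itemize}
The same holds on the right, with $\cf_r(x,X)$ in place of $\cf_l(x,X)$.

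The cases go as follows.
\begin{enumerate}
\item \emph{Isolated $x$.} Both cofinalities lie in $\sset{0,1}$. By the convention for isolated points, $\chi(x,X)=\pi\chi(x,X)=\om$. The right-hand sides also equal $\om+(\text{finite})=\om$.
\item \emph{Exactly one side accumulates, say the left.} Proposition \ref{p:1+0}(4) gives $\sp_t(x,X)=\sset{\cf_l(x,X)}$. Proposition \ref{p:1+1} then gives $\pi\chi(x,X)=\chi(x,X)=\cf_l(x,X)$. On the other side $\cf_r(x,X)\le 1$. Hence $\min$ and $\max$ of the two cofinalities, each after adding $\om$, both equal $\cf_l(x,X)$, since $\cf_l(x,X)\ge\om$.
\item \emph{Both sides accumulate.} Proposition \ref{p:1+0}(4) gives $\sp_t(x,X)=\sset{\cf_l(x,X),\cf_r(x,X)}$. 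Proposition \ref{p:1+1} identifies $\pi\chi$ and $\chi$ with the $\min$ and $\max$ of this set. Both values are infinite, so adding $\om$ changes nothing.
\end{enumerate}

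The only delicate point is the bookkeeping of the degenerate values $0$ and $1$ of the cofinality and the role of the summand $\om$. We need to check that the characterization of $x\in\cl{L_*}$ via $\cf_l(x,X)\ge\om$ is correct. If $L_*$ has no maximum, every interval $\ilpc(a,x]$ with $a<x$ meets $L_*$. If $L_*$ has a maximum $a$, then $\ilpp(a,+\infty)$ is a neighbourhood of $x$ missing $L_*$. Once this is checked, the statement is immediate.

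Alternatively, a direct verification works. The intervals $\ilpp(a_\al,b_\be)$, with $a_\al$ cofinal in $L_*$ and $b_\be$ coinitial in $R_*$, form a base of size $\om+\max$. Conversely, any base must be cofinal on the side with the larger cofinality, which gives the lower bound. For the $\pi$-character:
\begin{itemize}
\item the upper bound comes from one-sided intervals $\ilpp(a_\al,a_{\al+1})$ on the side with the smaller cofinality, or from a neighbourhood of $x$ when $x$ is isolated on that side;
\item the lower bound comes from regularity of the smaller cofinality, which rules out a smaller family from being a $\pi$-base.
\end{itemize}
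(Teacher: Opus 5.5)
\paragraph{Verdict.} Your route is the one the paper indicates: read off $\sp_t(x,X)$ from Proposition \ref{p:1+0}(4) and apply Proposition \ref{p:1+1}. Cases 1 and 3 are correct, and so is the whole $\chi$ computation. The gap is in Case 2, and it cannot be patched, because the problem lies in the statement itself.

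\paragraph{The error in Case 2.} There $\min\{\cf_l(x,X),\cf_r(x,X)\}=\cf_r(x,X)\in\{0,1\}$, so $\om+\min\{\cf_l(x,X),\cf_r(x,X)\}=\om$. What you correctly derived is $\pi\chi(x,X)=\cf_l(x,X)$. Your claim that the $\min$ and the $\max$, after adding $\om$, both equal $\cf_l(x,X)$ is true for the $\max$. It is false for the $\min$ unless $\cf_l(x,X)=\om$.

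\paragraph{The $\pi\chi$ formula is false as written.} Take $X=\om_1+1$ and $x=\om_1$. Then $\cf_l(x,X)=\om_1$ and $\cf_r(x,X)=0$. The point $x$ is a WOB-point by Proposition \ref{p:1+0.5}, so Proposition \ref{p:1-1} gives $\pi\chi(x,X)=\chi(x,X)=\om_1\neq\om$. The same happens at any right jump point $x$ of $\lex{\om_1}\B$, where $\cf_l=1$ and $\cf_r=\om_1$. There Proposition \ref{p:blex:1}(2) itself gives $\pi\chi(x,\lex{\om_1}\B)=\om_1$: this is its case ``$\ka_0$ or $\ka_1$ is finite''.

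\paragraph{The direct argument has the same flaw.} Suppose $x$ is not isolated but $\cf_r(x,X)\le 1$. Then some neighbourhood $W$ of $x$ lies in $(-\infty,x]$. So ``a neighbourhood of $x$ when $x$ is isolated on that side'' supplies nothing: every member of a $\pi$-base lying inside $W$ is a nonempty open subset of $(-\infty,x]$. Since $\{x\}$ is not open, each such member meets $(-\infty,x)$. Now suppose the family has fewer than $\cf_l(x,X)$ members. By regularity, the chosen points below $x$ are bounded by some $a<x$. Then the neighbourhood $W\cap(a,+\infty)$ contains no member. Hence the $\pi$-base has size at least $\cf_l(x,X)$.

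\paragraph{What your argument actually proves.} It proves the corrected statement:
\begin{itemize}
\item $\chi(x,X)=\om+\max\{\cf_l(x,X),\cf_r(x,X)\}$, exactly as stated;
\item $\pi\chi(x,X)=\om$ for isolated $x$;
\item for non-isolated $x$, $\pi\chi(x,X)=\min\sp_t(x,X)$, that is, the minimum of the \emph{infinite} values among $\cf_l(x,X)$ and $\cf_r(x,X)$.
\end{itemize}
You rightly singled out the degenerate values $0$ and $1$ as the delicate point; that is exactly where the claimed identity breaks. The paper uses only the $\chi$ half, in the proof of Theorem \ref{t:blex:2}, so the defect does not propagate.
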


The following statement follows from Propositions \ref{p:1-1}, \ref{p:1+0} and \ref{p:1+1}.

\begin{proposition}\label{p:1+2}
Let $X$ be a GO space and $x\in X$.
The following conditions are equivalent:
\begin{enumerate}
\item
$x$ is a \wob/-point;
\item
$\chi(x,X)= \pi\chi(x,X)$;
\item
$|\sp_t(x,X)|\leq 1$.
\end{enumerate}
\end{proposition}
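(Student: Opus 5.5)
The plan is to prove the cycle $(1)\rarr(2)\rarr(3)\rarr(1)$, each step coming directly from the earlier propositions. Throughout, fix a LOTS $Y$ containing $X$ as a subspace, give $X$ the order inherited from $Y$, and use the notation $L_*$, $R_*$ of Proposition~\ref{p:1+0}. If $x$ is isolated, all three conditions hold trivially: $\{x\}$ is a linearly ordered base, $\chi(x,X)=\pi\chi(x,X)=\om$ by convention, and $\sp_t(x,X)=\es$. So we may assume $x$ is non-isolated, which gives $\sp_t(x,X)\neq\es$.

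$(1)\rarr(2)$: this is immediate from Proposition~\ref{p:1-1}(1). At a \wob/-point all the listed invariants coincide, and in particular $\chi(x,X)=\pi\chi(x,X)$.

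$(2)\rarr(3)$: by Proposition~\ref{p:1+1}, for non-isolated $x$ we have $\min\sp_t(x,X)=\pi\chi(x,X)$ and $\max\sp_t(x,X)=\chi(x,X)$. If these are equal, then every element of $\sp_t(x,X)$ lies between $\min$ and $\max$, so the set has exactly one element. Note that $\sp_t(x,X)$ has at most two elements anyway by Proposition~\ref{p:1+0}(4).

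$(3)\rarr(1)$: here we use Proposition~\ref{p:1+0}(4) and (3), splitting into three cases.
\begin{itemize}
\item If $x\notin\cl{L_*}$, then condition (a) of Proposition~\ref{p:1+0}(3) holds.
\item If $x\notin\cl{R_*}$, then condition (b) holds.
\item If $x\in\cl{L_*}\cap\cl{R_*}$, then $\sp_t(x,X)=\sset{\cf_l(x,X),\cf_r(x,X)}$ by Proposition~\ref{p:1+0}(4d). Since this set has at most one element, $\cf_l(x,X)=\cf_r(x,X)$, which is condition (c).
\end{itemize}
In every case Proposition~\ref{p:1+0}(3) shows that $x$ is a \wob/-point.

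No step is a real obstacle. The only point needing care is the bookkeeping for isolated points against the convention $\chi=\pi\chi=\om$, $\sp_t=\es$, and that is handled at the start.
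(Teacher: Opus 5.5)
Your proof is correct and takes the same route as the paper. The paper only remarks that the proposition follows from Propositions~\ref{p:1-1}, \ref{p:1+0} and~\ref{p:1+1}, and your cycle $(1)\Rightarrow(2)\Rightarrow(3)\Rightarrow(1)$, with the isolated-point convention handled at the start, spells out exactly how those three results combine.
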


\begin{proposition}[{\cite[Theorem 2.5]{WilliamsZhou1998}, \cite[Theorem 3.12 (iii)]{Juhasz1992}}]\label{p:1+3}
In a monotonically normal compact space the set of \wob/-points is dense.
\end{proposition}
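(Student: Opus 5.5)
The plan is to argue inside a small regular-closed piece of the space and use the minimality of a cardinal invariant together with the monotone normality operator $g(x,U)$. Fix a nonempty open $U\subset X$. By regularity of the compact space $X$, choose a nonempty open $W$ with $\cl W\subset U$, and put $K=\cl W$. It suffices to find a \wob/-point of $X$ in $W$. Interior points of $K$ have the same neighbourhood bases in $K$ and in $X$, so it is enough to find such a point in $W$ for the compact monotonically normal space $K$. If $W$ contains an isolated point we are done, because isolated points are trivially \wob/-points. So assume $W$ has no isolated points.

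Next I choose a point of $W$ minimizing a cardinal invariant. Let $\ka=\min\set{\pi\chi(y,K): y\in W}$, which is an infinite cardinal since $W$ has no isolated points. I would first show that the set of points of $W$ realizing this minimum is large enough to shrink into. Concretely, replace $W$ by a nonempty open $W'\subset W$ with $\cl{W'}\subset W$ such that every point of $W'$ has $\pi$-character $\ka$ in $K$. If this is not directly available, I will use the standard Šapirovskiĭ-type argument for compact spaces: $\pi\chi$-minimal points form a dense set in a suitable open subset. The choice of $\ka$ is what forces regularity: by Proposition~\ref{p:1}(5) and compactness, $\ka$ is the least element of the relevant tightness spectra, hence a regular cardinal.

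The core step uses monotone normality. Take $x\in W'$ and a $\pi$-base $\set{V_\al:\al<\ka}$ at $x$. Using the operator $g$, I build by transfinite recursion of length $\ka$ open sets $O_\al\ni x$ that decrease in $\al$, with $O_{\al+1}\subset g(x,O_\al)$. Each $O_\al$ should avoid a point chosen in the corresponding set $V_\al$, with the intent that $\set{O_\al:\al<\ka}$ becomes a neighbourhood base. The defining property of $g$, namely $g(x,O)\cap g(y,O')=\es$ whenever $y\notin O$ and $x\notin O'$, is what should guarantee that any neighbourhood $N$ of $x$ contains some $O_\al$. Otherwise we obtain points $y_\al\in O_\al\setminus N$ for all $\al<\ka$. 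Compactness then gives a complete accumulation point $z\neq x$ of $\set{y_\al}$. Comparing $g(x,\cdot)$ with $g(z,\cdot)$ along the sequence yields disjoint neighbourhoods that both have to meet the tail of the sequence, which is a contradiction. The $\pi$-minimality of $\ka$ at points near $x$ is needed to ensure that the limit points arising in this argument cannot have a smaller $\pi$-character, so that the recursion does not stop before length $\ka$. Once $\set{O_\al}$ is a base linearly ordered by inclusion, $x$ is a \wob/-point, and $x\in W'\subset U$.

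I expect the main obstacle to be this last step: showing that the decreasing family actually is a base, rather than merely a $\pi$-base or a pseudobase. This is exactly the place where $\chi(x,K)$ could exceed $\pi\chi(x,K)$, and Proposition~\ref{p:1+2} shows that this is precisely the failure of the \wob/ property. If the direct recursion fails, my fallback is to invoke Rudin's theorem that compact monotonically normal spaces are continuous images of compact LOTS. In a compact LOTS the \wob/-points are dense: by Proposition~\ref{p:1+0.5}, the endpoints and jump points of any subinterval are \wob/-points. The remaining task would then be to transfer this density through an irreducible restriction of the quotient map, where the images of \wob/-points still have linearly ordered bases.
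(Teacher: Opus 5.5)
There is a genuine gap, and it sits in your core step. Choosing $x$ of minimal $\pi$-character and recursing along a $\pi$-base of length $\ka$ cannot yield a neighbourhood base: $\pi$-character only measures the ``thin'' side of a point, and $\chi$ can be larger on the other side. Concretely, let $Z=(\om_1\times[0,1))+\sset{p}+(0,1]$, with the lexicographic order on $\om_1\times[0,1)$. This is the closed long ray with a unit interval attached at its top point $p$. It is a compact connected LOTS, and $U=W=W'=Z$ with $\ka=\om$ satisfies all your requirements: $Z$ has no isolated points and every point has $\pi$-character $\om$. Yet $\cf_l(p,Z)=\om_1\neq\om=\cf_r(p,Z)$, so $\chi(p,Z)=\om_1$ and $p$ is not a \wob/-point (Proposition~\ref{p:1+0}(3)). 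Your recursion at $x=p$ would nevertheless produce a countable base.

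The failure is visible in your contradiction step. Since $\cl{g(x,O)}\subset O$ and $O_{\al+1}\subset g(x,O_\al)$, the accumulation point $z$ of the $y_\al$ lies in $\cl{O_{\al+1}}\subset O_\al$ for every $\al$. Hence every $g(z,V)$ meets every $g(x,O_\al)$, and monotone normality gives nothing. To exclude $z$ you would need $\bigcap_\al O_\al=\sset{x}$, i.e.\ $\psi(x,K)\le\ka$. For compact $K$ this is the same as $\chi(x,K)\le\ka$, which is exactly what is to be proved, and a $\pi$-base of size $\ka$ gives no control over it.

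The preliminary steps also fail. In $\lex{\om_1}\B$ both $\homg l,\om{\om_1}$ (of $\pi$-character $\om$) and $\homm{\om_1}$ (of $\pi$-character $\om_1$) are dense by Proposition~\ref{p:blex:1+1}(3). So an open $W'$ of constant minimal $\pi$-character need not exist, and there the points of minimal $\pi$-character are precisely the non-\wob/ points. Moreover, your regularity argument for $\ka$ rests on $\pi\chi=wt=\min\sp_t$, which Proposition~\ref{p:1+1} establishes only for GO spaces.

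The fallback does not repair this. Besides resting on Rudin's much deeper theorem, its transfer step is false as stated: images of \wob/-points under an irreducible map need not be \wob/-points. In the compact LOTS $(\om_1\times[0,1))+\sset{p}+[0,1]$, the point $p$ and the left endpoint $0$ of $[0,1]$ form a jump, so both are \wob/-points (Proposition~\ref{p:1+0.5}). Collapsing this jump is an irreducible map onto $Z$, since all other fibres are singletons and there are no isolated points. It sends both points to the non-\wob/ point $p$ of $Z$. For a closed surjection $f$, a point $y$ is a \wob/-point exactly when the fibre $f^{-1}(y)$ has an outer base linearly ordered by inclusion. You give no argument that such fibres lie over every nonempty open set.

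For comparison, the paper does not prove this proposition itself. It cites Williams--Zhou and Juh\'asz, and proves only the LOTS case (Corollary~\ref{c:cfgos:1}), where convexity forces a maximal chain of intervals to shrink onto a single point or a jump pair. What both of your routes lack is a substitute for that convexity in an arbitrary compact monotonically normal space: a way to make some decreasing chain of open subsets of $U$ collapse onto a single point. That is exactly what has to be established.
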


We give a simple proof of a corollary of this fact.

\begin{cor}\label{c:cfgos:1}
In a compact LOTS the set of \wob/-points is dense.
\end{cor}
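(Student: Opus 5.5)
Given a nonempty open set in a compact LOTS $X$, we must find a \wob/-point inside it. Fix such an open set; it contains a nonempty open interval. If that interval contains a jump point, or a minimum or maximum of $X$, we are done by Proposition \ref{p:1+0.5}. If it contains an isolated point, that point is trivially a \wob/-point. So assume the interval has no jump points, hence is densely ordered. Shrinking it, we may take $a<b$ in $X$ with $\ilpp(a,b)\neq\es$, and work with a closed interval $\ilcc[c,d]\subset\ilpp(a,b)$, $c<d$, which is again a compact LOTS. It therefore suffices to find, inside $\ilpp(c,d)$, a point $x$ with $\cf_l(x,X)=\cf_r(x,X)$, and then apply Proposition \ref{p:1+0}(3)(c). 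Points of $\ilpp(c,d)$ are interior to $\ilcc[c,d]$, so cofinalities computed in $X$ and in $\ilcc[c,d]$ agree.

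To produce such a point, I would build two sequences of length $\om$ converging to a common point. Since the order is dense on $\ilcc[c,d]$, choose $c<x_0<y_0<d$ and inductively $x_n<x_{n+1}<y_{n+1}<y_n$, which density allows. Compactness (completeness of the order) gives $s=\sup_n x_n$ and $t=\inf_n y_n$ with $s\le t$, both in $\ilcc[x_0,y_0]\subset\ilpp(c,d)$. If $s=t$, then $x=s$ has $\cf_l(x,X)=\om=\cf_r(x,X)$, since the $x_n$ increase cofinally to $x$ from the left and the $y_n$ decrease to $x$ from the right. Moreover $x\in\cl{L_*}\cap\cl{R_*}$, so Proposition \ref{p:1+0}(3)(c) applies.

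The main obstacle is forcing $s=t$, because the interval $\ilcc[s,t]$ might be nondegenerate. To handle this, I would instead take $s=\sup_n x_n$ as the candidate point. It satisfies $\cf_l(s,X)=\om$, since the $x_n$ are strictly increasing and $s$ is not attained, because $x_n<x_{n+1}\le s$. We then need $\cf_r(s,X)=\om$ as well, so the plan is to choose the sequences so that the right side of $s$ is also countably approached. Concretely, at a limit in this process, if $s<t$, I would restart the construction inside $\ilpp(s,t)$, alternating steps so that the sequence approaching from the right is chosen adaptively.

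A cleaner fix avoids this adaptivity. Pick any point $p\in\ilpp(c,d)$ and consider $\kappa=\cf_r(p,X)$ and $\cf_l(p,X)$. If they are unequal, say $\cf_l(p,X)=\om<\kappa$ (as at $s$ above), take a decreasing sequence $z_\al\to p$ of length $\kappa$ from the right. Each interval $\ilpp(z_{\al+1},z_\al)$ is nonempty by density, so we can recursively build nested configurations. The simplest route is to choose $x_n$ increasing and $y_n$ decreasing with $y_n-x_n$ controlled by the requirement that $\ilpp(x_n,y_n)$ be eventually inside each interval. Since no metric is available, I expect the final argument to be: suppose no point of $\ilpp(c,d)$ has equal one-sided cofinalities. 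Then every such point has $\cf_l\neq\cf_r$, and applying the sup/inf construction yields $s\le t$ with $\cf_l(s)=\om=\cf_r(t)$. Since $\ilpp(s,t)=\es$ would make $s$ a jump point, which is excluded, we get $s<t$ with the order dense on $\ilpp(s,t)$. Then the construction is iterated transfinitely, and the intervals $\ilcc[s,t]$ shrink. The step I expect to be hardest is ensuring this transfinite process terminates in a point with matching cofinalities; I would try to close it by a closing-off argument with a regular cardinal that dominates all sizes involved.
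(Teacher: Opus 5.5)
There is a genuine gap. The whole content of the corollary is the existence of a point of $\ilpp(c,d)$ with $\cf_l=\cf_r$, and your proposal never establishes it. Your reductions are fine: isolated points, endpoints and jump points are \wob/-points, and on a jump-free interval without isolated points a point with $\cf_l(x,X)=\cf_r(x,X)$ is a \wob/-point by Proposition \ref{p:1+0}(3)(c). You also correctly see that the $\om$-step construction may end with $s<t$. But the remaining paragraphs only list candidate strategies, and they end by conceding that the transfinite process is not shown to terminate ``in a point with matching cofinalities''. A closing-off argument with a large regular cardinal is not the missing ingredient.

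What is missing is to build the left and right approximations simultaneously along one ordinal index, and to continue through limit stages. By transfinite recursion choose $x_\al<y_\al$ in $\ilpp(c,d)$ with $(x_\al)$ strictly increasing and $(y_\al)$ strictly decreasing:
\begin{itemize}
\item at successor stages, density gives $x_\al<x_{\al+1}<y_{\al+1}<y_\al$;
\item at a limit $\ga$, put $s=\sup_{\al<\ga}x_\al$ and $t=\inf_{\al<\ga}y_\al$, which exist by compactness and satisfy $s\le t$; if $s<t$, choose $s<x_\ga<y_\ga<t$, which is possible because there are no jumps.
\end{itemize}
Since $\al\mapsto x_\al$ is injective, the recursion must halt before $|X|^+$, and it can halt only at a limit $\ga$ with $s=t=:p$. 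Then $(x_\al)_{\al<\ga}$ and $(y_\al)_{\al<\ga}$ converge strictly to $p$ from the two sides, so $\cf_l(p,X)=\cf(\ga)=\cf_r(p,X)$ automatically. The matching comes from the common index set, not from any cardinal bookkeeping.

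This is the paper's proof in recursive form. The paper takes a maximal chain of open intervals inside $U$, ordered by inclusion of closures; its intersection $F$ is nonempty by compactness. Maximality forces $F$ to be either a single point, in which case the chain itself is a linearly ordered base, or a jump pair $a<b$, which is handled by Proposition \ref{p:1+0.5}.
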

\begin{proof}
Let $X$ be a compact LOTS. Let $U$ be a nonempty open subset of $X$.
Isolated points of $U$ are \wob/-points. Consider the case when $U$ has no isolated points.
Set $\cI=\set{\ilpp(a,b): a,b\in U,\ \es\neq\ilpp(a,b)\subset U}$. On $\cI$ consider the order: for $A,B\in \cI$, $A<B$ if $\cl B\subset A$.
By the Hausdorff maximum principle there is a maximal linearly ordered subset $\cU\subset \cI$. Let $F=\bigcap\cU$. Then $F\neq\es$. Let $a=\min F$ and $b=\max F$.

Let us show that $\ilpp(a,b)=\es$. Suppose otherwise. Since $U$ has no isolated points, the set $\ilpp(a,b)$ is infinite. Let $a',c,b'\in \ilpp(a,b)$, $a'<c<b'$. Then $I=\ilpp(a',b')\in \cI$ and $A<I$ for every $A\in \cU$, contradicting the maximality of $\cU$. Thus $\ilpp(a,b)=\es$.

If $a=b$, then $F=\sset{a}$ and $\cU$ is a linearly ordered base of the point $a$. If $a\neq b$, then $a$ and $b$ are jump points. By Proposition \ref{p:1+0.5}, $a$ and $b$ are \wob/-points.
\end{proof}


\begin{proposition}\label{p:1+0a}
Let $Y$ be a LOTS, $X\subset Y$.
On $X$ we consider the order inherited from $Y$ and the subspace topology of the space $Y$.
Let $L_x=\ilpp(-\infty,x)\cap X$ and $R_x=\ilpp(x,+\infty)\cap X$ for $x\in X$.
\begin{enumerate}
\item
The topology of $X$ coincides with the order topology on $X$ if and only if
for every $x\in X$ the following conditions hold:
\begin{enumerate}
\item
$\max L_x$ exists if $x\notin\cl{L_x}$;
\item
$\min R_x$ exists if $x\notin\cl{R_x}$.
\end{enumerate}
\item
The topology of $X$ coincides with the order topology on $X$ if
one of the following conditions holds:
\begin{enumerate}
\item
$x\in \cl{L_x}\cap \cl{R_x}$ for every $x\in X$;
\item
$X$ is dense in $Y$ and contains no jump points of $Y$;
\item
$X$ is compact.
\end{enumerate}
\end{enumerate}
\end{proposition}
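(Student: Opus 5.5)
The plan is to compare, point by point, the subspace topology $\tau_s$ on $X$ with the order topology $\tau_<$ of $(X,<)$. Since every open interval of $X$ is the trace on $X$ of an open interval (or ray) of $Y$, we always have $\tau_<\subset\tau_s$. So the whole question is when every $\tau_s$-neighbourhood of a point $x$ contains a $\tau_<$-neighbourhood of $x$.

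A $\tau_s$-basic neighbourhood of $x$ has the form $\ilpp(u,v)\cap X$ with $u<x<v$ in $Y$, possibly with rays. It splits into a left part and a right part, so it suffices to handle the two sides separately; by symmetry (pass to $\rev Y$) only the left side needs treatment. We split into two cases.

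\textbf{Case $x\in\cl{L_x}$.} For each $u<x$ in $Y$ there is $a\in L_x$ with $u<a$. Hence $\ilpp(a,x]\cap X\subset\ilpp(u,x]\cap X$, and the left side is always fine.

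\textbf{Case $x\notin\cl{L_x}$.} Some $\ilpp(u,x]\cap X=\sset{x}\cup\ldots$ has left part empty, so $\tau_s$ sees $x$ as left-isolated. Now $\tau_<$ makes $x$ left-isolated iff either $L_x=\es$ or $L_x$ has a maximum $a$; in the latter case $\ilpp(a,+\infty)$ realizes it. If instead $L_x\ne\es$ has no maximum, then every $\tau_<$-neighbourhood contains $\ilpp(a,x]\cap X$ for some $a\in L_x$, and this set meets $L_x$. So the topologies differ.

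This gives (1), with the caveat that condition (a) must be read as ``$L_x=\es$ or $\max L_x$ exists''; I will note that when $L_x=\es$ the point is trivially fine, e.g.\ $\cl{L_x}=\es$. The key step here is checking that the left and right conditions combine: a $\tau_<$-neighbourhood built from the left witness and one from the right witness intersect to give what is needed.

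For (2), each condition will be reduced to (1).
\begin{itemize}
\item[(a)] The hypotheses of (1) are vacuous, since $x\in\cl{L_x}\cap\cl{R_x}$ for every $x$.
\item[(b)] Suppose $X$ is dense in $Y$ and $x\notin\cl{L_x}$ with $L_x\neq\es$. Then there is $u<x$ in $Y$ with $\ilpp(u,x)\cap X=\es$, and density forces $\ilpp(u,x)=\es$. So $x$ is a right jump point of $Y$, contradicting the assumption. Hence (a) of (1) holds, and symmetrically (b).
\item[(c)] Suppose $X$ is compact and $x\notin\cl{L_x}$ with $L_x\ne\es$. Then $L_x=X\cap\ilpp(-\infty,x)$ is closed in $X$, being equal to $X\cap\ilpc(-\infty,u]$ for a suitable $u$, and hence compact. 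A compact subset of a LOTS $Y$ contains its supremum: otherwise the open cover $\set{\ilpp(-\infty,a):a\in L_x}$ of $L_x$ would have no finite subcover. Thus $\max L_x$ exists.
\end{itemize}

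The main obstacle is only the bookkeeping in (1), in particular the interpretation of (a) when $L_x$ is empty. Item (2)(c) needs the compactness argument, but it is short.
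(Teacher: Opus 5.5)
Your proposal is correct and is essentially the paper's argument: the paper says only that (1) is verified directly and that (2) follows from (1), and you carry out exactly that. You also rightly read (1)(a) as ``$L_x=\es$ or $\max L_x$ exists'' (and (1)(b) symmetrically), without which the statement is self-contradictory, since at the minimum of a nonempty compact $X$ the literal (1)(a) fails while (2)(c) asserts that the topologies coincide. The only wording fix is in (2)(c): ``contains its supremum'' should read ``has a maximum'', which is what your open-cover argument actually proves.
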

\begin{proof}
Item (1) is verified directly without difficulty. Item (2) follows from (1).
\end{proof}

The following statements are well known \cite{Nagata1985}.

\begin{proposition}\label{p:1+4}
Let $X$ be a LOTS.
\begin{enumerate}
\item
The space $X$ is locally compact if $X$ has no gaps.
\item
The space $X$ is compact if and only if $X$ has no gaps and $X$ has a minimum and a maximum.
\item
The space $X$ is connected if and only if $X$ has no gaps and no jumps.
\end{enumerate}
\end{proposition}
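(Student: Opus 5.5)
We would first isolate the common tool: \emph{a linearly ordered set without gaps is conditionally complete}, i.e.\ every nonempty subset $S$ that is bounded above has a supremum. To see this, put $B=\set{y\in X: s\le y \text{ for all } s\in S}$ and $A=X\setminus B$. Then $(A,B)$ is a proper cut: $B\ne\es$ by boundedness, and $A\ne\es$ unless $S$ already has a maximum. Since it is not a gap, either $B$ has a minimum, which is $\sup S$, or $A$ has a maximum $a$. In the latter case $a<s$ for some $s\in S$, and then $(a,s]\subset B$ forces $s=\min B$. We would also record the routine fact that a convex subset $C\subset X$ carries the order topology of its induced order as its subspace topology, and that a gap of $C$ yields a gap of $X$.

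For (2), necessity: if $X$ has no maximum, the cover $\set{\ilpp(-\infty,x):x\in X}$ has no finite subcover, and symmetrically for the minimum. If $(A,B)$ is a gap, then $\set{\ilpp(-\infty,a):a\in A}\cup\set{\ilpp(b,+\infty):b\in B}$ is an open cover without a finite subcover, since $A$ has no maximum and $B$ has no minimum. Sufficiency is the classical ``creeping'' argument. Fix an open cover $\cU$, let $m=\min X$, and let $D$ be the set of those $x$ for which $\ilcc[m,x]$ is covered by finitely many members of $\cU$. Let $s=\sup D$, which exists by the completeness lemma. Choose $U\in\cU$ containing $s$ and an interval around $s$ inside $U$. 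If $s>m$, that interval contains some $d\in D$ with $d<s$ or else $s$ has an immediate predecessor lying in $D$; in either case $s\in D$. Then either $s=\max X$, or $U$ contains a point $s'>s$ (or the successor of $s$), which gives an element of $D$ above $s$, a contradiction. Hence $\max X\in D$, so $\cU$ has a finite subcover.

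For (1), let $x\in X$ and pick $a\le x\le b$ such that $\ilcc[a,b]$ is a neighbourhood of $x$. If $x$ is not the minimum, take $a<x$; if $x=\min X$, take $a=x$; treat $b$ symmetrically. If $x$ has an immediate predecessor or successor, the relevant endpoint may be taken to be $x$ itself and the interval is still a neighbourhood. The set $\ilcc[a,b]$ is convex, so by the preliminary remark it is a LOTS with endpoints and without gaps, hence compact by (2).

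For (3), necessity: a jump $(a,b)$ splits $X$ into the disjoint open sets $\ilpp(-\infty,b)=\ilpc(-\infty,a]$ and $\ilpp(a,+\infty)$. A gap $(A,B)$ splits $X$ into $A=\bigcup_{a\in A}\ilpp(-\infty,a)$ and $B$, which are open for the same reason. Sufficiency: suppose $X=U\cup V$ with $U,V$ disjoint, nonempty and open, and take $u\in U$, $v\in V$ with $u<v$ (the other order is symmetric). Let $s=\sup\set{x\in\ilcc[u,v]: \ilcc[u,x]\subset U}$, which exists by the lemma. If $s\in U$, openness together with the absence of jumps yields points above $s$ still in $U$, with the whole segment up to them contained in $U$, contradicting the choice of $s$. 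If $s\in V$, openness yields $s'<s$ with $\ilpc(s',s]\subset V$; since there are no jumps, $\ilpp(s',s)$ is nonempty and contains points $x$ with $\ilcc[u,x]\subset U$, a contradiction.

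The main obstacle is the bookkeeping in the sup arguments, namely distinguishing the case where the sup is attained from the case where it is approached and handling immediate predecessors and successors. This is exactly where the completeness lemma and the no-jump hypothesis are used, so we would write out that lemma carefully first.
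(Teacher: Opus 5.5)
Your proof is correct. The paper does not prove this proposition: it quotes it as well known, with a reference to Nagata's book, so there is no argument of the paper to compare with. You supply the standard self-contained route. Absence of gaps gives conditional completeness; a supremum (``creeping'') argument then gives compactness of $X$ and, via convexity, of closed intervals; and a second supremum argument gives connectedness. What this buys over the citation is only self-containedness, but your argument is complete.

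A few details are worth tightening.
\begin{itemize}
\item In the creeping argument of (2), you can avoid the separate immediate-predecessor case. If $(p,q)\subset U$ is the interval around $s$ and $s>m$, then $p<\sup D$, so some $d\in D$ satisfies $p<d\le s$. Hence $[m,s]\subset[m,d]\cup(p,q)$ is finitely covered, so $s\in D$. (If the interval is $(-\infty,q)$, take $d=m$.)
\item In (3), write $E=\{x\in[u,v]: [u,x]\subset U\}$. In the branch $s\in U$ you use tacitly that $[u,s)\subset U$. This holds because every $y<s$ lies below some $x\in E$; it deserves a line, since it is what gives $s\in E$.
\item In the branch $s\in V$, the no-jump hypothesis is not actually needed. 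Since $s\notin E$ and $s'<s=\sup E$, the supremum property alone yields a point of $E\cap(s',s)$.
\item Both the statement of (2) and your necessity argument tacitly assume $X\neq\es$: the empty LOTS is compact but has no minimum or maximum.
\end{itemize}
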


\begin{proposition}\label{p:1+5}
Let $X$ be a compact LOTS.
The space $X$ is zero-dimensional if and only if for any $a,b\in X$, $a<b$, there is a jump $(A,B)$ such that $\ilpc(-\infty,a]\subset A$ and $\ilcp[b,+\infty)\subset B$.
\end{proposition}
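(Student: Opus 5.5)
We prove both directions of Proposition \ref{p:1+5} directly from the definitions, using the fact that $X$ is a compact Hausdorff space, so zero-dimensionality amounts to the existence of enough clopen sets. The key observation is this: for a jump $(A,B)$ with $a_0=\max A$ and $b_0=\min B$, the set $A=\ilpc(-\infty,a_0]=\ilpp(-\infty,b_0)$ is both closed and open, and so is $B$.

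($\larr$) Assume the jump condition. We show that every point $x$ has a base of clopen neighbourhoods. Let $W$ be an open neighbourhood of $x$; it is enough to treat a basic interval neighbourhood $\ilpp(c,d)\ni x$, where either end may be infinite.
\begin{itemize}
\item If $c$ is finite, then $c<x$. Applying the hypothesis to the pair $c<x$ gives a jump $(A,B)$ with $\ilpc(-\infty,c]\subset A$ and $x\in B$. The set $B$ is clopen, contains $x$, and misses $\ilpc(-\infty,c]$.
\item If $d$ is finite, the pair $x<d$ similarly gives a clopen $A'\ni x$ missing $\ilcp[d,+\infty)$.
\end{itemize}
Then $B\cap A'$ (or just one of them, when an end is infinite) is a clopen set with $x\in B\cap A'\subset\ilpp(c,d)$. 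Hence $X$ has a base of clopen sets. This direction does not use compactness.

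($\rarr$) Assume $X$ is zero-dimensional and compact, and fix $a<b$.
\begin{enumerate}
\item Since $X$ is compact Hausdorff and zero-dimensional, the closed sets $\ilpc(-\infty,a]$ and $\ilcp[b,+\infty)$ are separated by a clopen set. Indeed, cover the compact set $\ilpc(-\infty,a]$ by finitely many clopen sets missing the point $b$, and so on. More simply, there is a clopen $C$ with $\ilpc(-\infty,a]\subset C$ and $C\cap\ilcp[b,+\infty)=\es$.
\item Set $A=\set{x\in X:\ilpc(-\infty,x]\subset C}$, the largest initial segment of $X$ contained in $C$, and let $B=X\setminus A$. This is a cut, with $a\in A$ and $b\in B$, so it is proper. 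Then $\ilpc(-\infty,a]\subset A$ and, since $A$ is an initial segment not containing $b$, also $\ilcp[b,+\infty)\subset B$.
\item $A$ has a maximum. Let $s=\sup A$, which exists because a compact LOTS has no gaps (Proposition \ref{p:1+4}). Every point of $A$ lies in $C$ and $s\in\cl A\subset C$, as $C$ is closed. Hence $\ilpc(-\infty,s]\subset C$, so $s\in A$.
\item $B$ has a minimum. Let $t=\inf B$.
\begin{itemize}
\item If $t\in A$, then $t=s$ is the maximum of $A$ and also the infimum of $B$ without belonging to $B$. So $s\in\cl B$, every neighbourhood of $s$ meets $B$, and $s\in C$.
\item Since $C$ is open, some interval $\ilpp(u,v)\ni s$ lies in $C$. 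As $s=\inf B$ with $s\notin B$, the interval $(s,v)$ contains points $y$ of $B$, so $y>s$ and $y<v$. But then $\ilpc(-\infty,y]\subset \ilpc(-\infty,s]\cup\ilpp(u,v)\subset C$, which forces $y\in A$, a contradiction.
\end{itemize}
Hence $t\in B$ and $t=\min B$.
\end{enumerate}
So $(A,B)$ is a jump with the required containments.

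The main obstacle is step 4 of the second direction, namely showing that $B$ attains its infimum. This is exactly where the openness of $C$ enters, while its closedness was used in step 3 to show that $A$ has a maximum. Compactness is used twice: for the existence of suprema and infima (no gaps), and for the clopen separation in step 1.
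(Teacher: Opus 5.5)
Your proof is correct in both directions, and it takes a different route from the paper in each.

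For ($\Leftarrow$), the paper only observes that the jump condition makes $X$ totally disconnected. It then cites the theorem that a compact totally disconnected space is zero-dimensional (Engelking). You instead build clopen neighbourhoods directly from the jumps, using that both halves of a jump are clopen. This is more elementary, and, as you note, it shows that the jump condition implies zero-dimensionality in any LOTS, with no compactness needed.

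For ($\Rightarrow$), both proofs start from a clopen $C$ with $(-\infty,a]\subset C$, but they extract the jump differently. The paper takes $a'=\max C$, which exists because $C$ is compact. Since $C$ is open, $a'\neq\max X$, and no point of $C$ lies above $a'$, the point $a'$ must have an immediate successor. Hence $(-\infty,a']$ is directly one half of the required jump. Your largest initial segment inside $C$ gives a possibly smaller jump and needs the two separate checks of steps 3 and 4, so it is somewhat longer.

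It does have one advantage: it only uses $b\notin C$, not $C\cap[b,+\infty)=\varnothing$. So the loosely worded first version of your step 1 (clopen sets missing the point $b$) would already have sufficed for your argument. The paper's $\max C$ approach genuinely needs the stronger separation to guarantee $\max C<b$.
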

\begin{proof}
($\rarr$) Let $a<b$. Since $X$ is a zero-dimensional compact space, there is a clopen $U\subset X$ such that $\ilpc(-\infty,a]\subset U$ and $\ilcp[b,+\infty)\subset X\setminus A$. Set $a'=\max U$. Then the set $\ilpc(-\infty,a']$ is clopen.
Set $b'=\min \ilpp(a',+\infty)$, $A=\ilpc(-\infty,a']$ and $B=\ilcp[b,+\infty)$.
Then $(A,B)$ is a jump and $\ilpc(-\infty,a]\subset A$ and $\ilcp[b,+\infty)\subset B$.

($\larr$) The condition implies that $X$ is totally disconnected. Since $X$ is compact, $X$ is zero-dimensional \cite[Theorem 6.2.9]{EngelkingBookRu}.
\end{proof}

\begin{proposition}\label{p:1+6}
Let $X$ be a zero-dimensional compact LOTS. A base at a point $x\in X$ is formed by open intervals of the form $\ilcc[a,b]$, where $a$ is the minimum of $X$ or a right jump point, and $b$ is the maximum of $X$ or a left jump point.
\end{proposition}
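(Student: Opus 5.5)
The plan is to first check that these intervals are open, and then show that they form a base by applying Proposition~\ref{p:1+5} twice, once on each side of $x$.

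\emph{Openness.} Let $a$ be the minimum of $X$ or a right jump point, and let $b$ be the maximum of $X$ or a left jump point. If $a=\min X$, then $\ilcp[a,+\infty)=X$. If $a$ is the right point of a jump $(a',a)$, then $\ilcp[a,+\infty)=\ilpp(a',+\infty)$. In both cases $\ilcp[a,+\infty)$ is open. Symmetrically, $\ilpc(-\infty,b]$ is open. Hence $\ilcc[a,b]=\ilcp[a,+\infty)\cap\ilpc(-\infty,b]$ is open; it is in fact clopen, being closed in a LOTS.

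\emph{Base.} Let $x\in X$ and let $U$ be a neighbourhood of $x$. Choose an order-open interval neighbourhood of $x$ contained in $U$, with endpoints $c<x<d$ or with one or both sides unbounded. The left endpoint is handled as follows; the right endpoint is symmetric.
\begin{itemize}
\item If the interval is unbounded on the left, or if $x=\min X$, take $a=\min X$.
\item Otherwise $c<x$. Apply Proposition~\ref{p:1+5} to the pair $c<x$. This gives a jump $(A,B)$ with $\ilpc(-\infty,c]\subset A$ and $\ilcp[x,+\infty)\subset B$. Since $X$ is compact, $A$ is closed and $B$ is closed, so $A$ has a maximum $a'$ and $B$ has a minimum $a$; that is, $(A,B)$ is a genuine jump $(a',a)$. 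Now $a$ is a right jump point with $c<a\le x$, because $c\in A$ and $x\in B$.
\end{itemize}
Similarly we obtain a $b$ with $x\le b<d$, where $b$ is either the maximum of $X$ or a left jump point. Then $x\in\ilcc[a,b]\subset\ilpp(c,d)\subset U$, which proves the claim.

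The only step needing care is the degenerate case where $x$ is itself an endpoint of $X$, or where the basic neighbourhood is a ray; this is exactly why the minimum and maximum are allowed as endpoints. Note also that a jump $(A,B)$ in the paper's definition already has $\max A$ and $\min B$, so the compactness remark is only a sanity check. No serious obstacle is expected.
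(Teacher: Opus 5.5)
Your proof is correct and is essentially the paper's argument. On each side of $x$ you apply Proposition~\ref{p:1+5} to a point below (resp. above) $x$ in the interval neighbourhood, which gives a right jump point $a$ and a left jump point $b$ with $x\in[a,b]\subset U$; in the degenerate cases you use $\min X$ or $\max X$, where the paper sets $a=x$ or $b=x$. You also check explicitly that $[a,b]$ is open, via $[a,+\infty)=(a',+\infty)$ for a jump $(a',a)$, a detail the paper leaves implicit.
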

\begin{proof}
Let $U$ be an open interval containing $x$.

Let us find the point $a$.
If $\ilpp(-\infty,x)\cap U=\es$, set $a=x$. Otherwise, take $a'\in \ilpp(-\infty,x)\cap U$. By Proposition \ref{p:1+5} there exist $a'',a\in X$ such that $a'\leq a''<a\leq x$ and $\ilpp(a'',a)=\es$.

Let us find the point $b$.
If $\ilpp(x,+\infty)\cap U=\es$, set $b=x$. Otherwise, take $b'\in \ilpp(x,+\infty)\cap U$. By Proposition \ref{p:1+5} there exist $b'',b\in X$ such that $x\leq b<b''\leq b'$ and $\ilpp(b,b'')=\es$.
\end{proof}


\section{Compact subsets of homogeneous GO spaces} \label{sec:hcomp}

A locally compact power homogeneous monotonically normal space has countable character \cite[Corollary 3.22]{Arhangelskii2004}.
A power homogeneous GO space of pointwise countable type has countable character \cite[Corollary 29]{Arhangelskii2005}.

\begin{theorem}\label{t:main:1}
Let $X$ be a homogeneous GO space. Then
\begin{enumerate}
\item
$\chi(K)\leq\om_1$ and $|K|\leq 2^{\om_1}$ for every compact $K\subset X$;
\item
if there is a compact $K\subset X$ with $\chi(K)>\om$, then $\chi(X)=\om_1$ and $\pi\chi(X)=\om$.
\end{enumerate}
\end{theorem}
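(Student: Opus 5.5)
The plan is to control the tightness spectrum. By Proposition \ref{p:1}(1), $\sp_t(x,X)=\sp_t(X)$ for every $x\in X$. By Proposition \ref{p:1+0}(4), each $\sp_t(x,X)$ has at most two elements, namely $\cf_l(x,X)$ and/or $\cf_r(x,X)$. Fix a point $x_0\in X$ and set $S=\sp_t(X)=\sp_t(x_0,X)$, so $|S|\leq 2$. By Proposition \ref{p:1+1}, $\chi(X)=\max S$ and $\pi\chi(X)=\min S$ whenever $X$ is not discrete; the discrete case is trivial, since then every compact set is finite.

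Let $K\subset X$ be compact and infinite. By Proposition \ref{p:1}(2), $\sp_t(K)\subset S$, and by Proposition \ref{p:1}(4), $\om\in\sp_t(K)$. Hence $\om\in S$, and so $S=\{\om\}$ or $S=\{\om,\mu\}$ for a single regular $\mu>\om$. Now $K$ is a compact GO space, hence a compact LOTS by Proposition \ref{p:1+0a}(2c). Proposition \ref{p:1}(3) shows that $\sp_t(K)$ contains every infinite regular cardinal $\le t(K)$. If $t(K)\geq\om_2$, then $\om_1,\om_2\in\sp_t(K)\subset S$ together with $\om\in S$ gives $|S|\geq 3$, a contradiction. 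So $t(K)\leq\om_1$. By Proposition \ref{p:1+1} applied in $K$, $\chi(x,K)=t(x,K)\leq\om_1$, hence $\chi(K)\leq\om_1$.

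For part (2), suppose $\chi(K)>\om$. Then some $\chi(x,K)=\max\sp_t(x,K)>\om$. Since $\chi(K)\le\om_1$, this value is exactly $\om_1$, so $\om_1\in S$ and therefore $S=\{\om,\om_1\}$. It follows that $\chi(X)=\max S=\om_1$ and $\pi\chi(X)=\min S=\om$.

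For the cardinality bound, the plan is to use the fact that every point of $K$ is a $G_{\om_1}$ in $K$, since $\psi(K)\le\chi(K)\le\om_1$. Combined with $t(K)\le\om_1$, Arhangel'skii's inequality $|K|\leq 2^{\chi(K)L(K)}$ for compact Hausdorff spaces gives $|K|\le 2^{\om_1}$. In a compact space $\chi$ equals $\psi$, so this holds directly with $L(K)=\om$.

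The main obstacle is the first step, namely checking that $\sp_t$ behaves correctly: that Proposition \ref{p:1}(2) indeed gives $\sp_t(x,K)\subset\sp_t(x,X)$, so that $\sp_t(K)\subset S$. The relevant cofinalities are taken inside $K$, not $X$. The inclusion holds because $T(x,A)$ for $A\subset K$ does not depend on the ambient space.
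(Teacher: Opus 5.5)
Your proof is correct and follows the paper's argument essentially step by step. Homogeneity together with Proposition \ref{p:1+0}(4) gives $|\sp_t(X)|\le 2$, and Proposition \ref{p:1}(2)--(4) then forces $\sp_t(K)\subset\{\om,\om_1\}$ (equivalently $t(K)\le\om_1$). Proposition \ref{p:1+1} turns this into the bounds on $\chi$ and $\pi\chi$, and Arhangel'skii's inequality $|K|\le 2^{L(K)\chi(K)}$ gives $|K|\le 2^{\om_1}$. Your detour through $K$ being a LOTS and your mention of $\psi$ and $t$ in the cardinality step are unnecessary but harmless.
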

\begin{proof}
Let $K\subset X$ be compact.
By Proposition \ref{p:1+0}(4) and Proposition \ref{p:1}(1), $|\sp_t(X)|\leq 2$.
By Proposition \ref{p:1}(2), $\sp_t(K)\subset \sp_t(X)$, hence $|\sp_t(K)|\leq 2$.
By Proposition \ref{p:1+0}(3), $\sp_t(K)\subset\sset{\om,\om_1}$.

Let us prove (1). If $K$ is finite, the statement is obvious. Consider the case when $K$ is infinite.
By Proposition \ref{p:1+1}, $\chi(K)=\max \sp_t(K)\leq \om_1$.
By the Arhangel'skii theorem
\cite[Theorem 4.5]{Hodel1984handbook}, $|K|\leq 2^{\om_1}$.

Let us prove (2). Let $\chi(K)>\om$. By (1), $\chi(K)=\om_1\in \sp_t(K)$.
By Proposition \ref{p:1}(4), $\om\in \sp_t(K)$. Since $\sp_t(K)\subset\sset{\om,\om_1}$, we get $\sp_t(K)=\sset{\om,\om_1}$. Since $\sp_t(K)\subset \sp_t(X)$ and $|\sp_t(X)|\leq 2$, we get $\sp_t(X)=\sset{\om,\om_1}$. By Proposition \ref{p:1+1}, $\chi(X)=\om_1$ and $\pi\chi(X)=\om$.
\end{proof}

\begin{proposition}\label{p:main:1}
Let $\la$ be a regular cardinal and $C\subset \la$ a stationary subset. Then $\D(C)=\la$.
\end{proposition}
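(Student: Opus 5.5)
We need $\D(C)\le\la$ and $\D(C)\ge\la$. Here $C$ carries the subspace topology of the ordinal $\la$ with its order topology, so $C$ is a GO space.

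For the upper bound, note that $|C|\le\la$, so the complement of the diagonal has size at most $\la\cdot\la=\la$. Since $C$ is Hausdorff, each point $p=(x,y)\notin\D_C$ is excluded by the open set $C\times C\setminus\cl{\{p\}}$, which contains $\D_C$. Hence $\D_C=\bigcap_{p\notin\D_C}(C\times C\setminus\{p\})$ is an intersection of at most $\la$ open sets. So $\D(C)\le\la$.

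For the lower bound, suppose towards a contradiction that $\D_C=\bigcap_{n\in I}W_n$ with $|I|=\mu<\la$ and each $W_n$ open in $C\times C$. The plan is a pressing-down argument.

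1. Fix $n\in I$. For each $\al\in C\cap\operatorname{Lim}$ with $\al>0$, the point $(\al,\al)$ lies in $W_n$, and basic neighbourhoods in $C$ of a limit ordinal $\al$ have the form $\ilpc(\be,\al]\cap C$. Choose $f_n(\al)<\al$ with $\bigl(\ilpc(f_n(\al),\al]\cap C\bigr)^2\subset W_n$.
2. Define $g(\al)=\sup_{n\in I}f_n(\al)$. For regular limit $\al$ with $\al>\mu$ we have $g(\al)<\al$.
3. If $\mu$ is infinite, the set $C'$ of limit points of $C$ that are $>\mu$ is still stationary. If $\mu$ is finite, $\D(C)\ge\om$ is trivial unless $C$ has isolated points only, and $C$ is not discrete since it is stationary; the case $\la=\om$ also needs a separate word. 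On $C'$ we have $g(\al)<\al$ at least where $\cf\al>\mu$.

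Step 3 is the main obstacle. If $\la=\mu^+$, every $\al<\la$ has $\cf\al\le\mu$, so we cannot take the sup in one go. The first thing to try is to use a pressing-down argument for each $f_n$ separately rather than for $g$ directly.

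1. Fodor's lemma, applied to $f_n$ on the stationary set $C'$, gives a stationary $S_n$ and a value $\ga_n$ with $f_n\equiv\ga_n$ on $S_n$.
2. With $\mu$ many functions this does not directly intersect well. So instead use the closed unbounded sets $D_n=\{\de<\la:\de$ is closed under $f_n$-type witnesses$\}$.

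A cleaner route is to build one stationary set by iteration. Start from $S_0\subset C'$ on which $f_0$ is constant, then shrink for $f_1$, and so on. This works when $\mu=\om$, which is the case that matters for the paper, and transfinitely for $\mu<\la$ one can use normality of the club filter together with $\la$-completeness of the ideal of nonstationary sets. Concretely, first restrict to the club $E$ of $\de$ such that for each $n$ and each $\ga<\de$, if $\sup\{\al\in C':f_n(\al)\le\ga\}=\la$ then that set meets $(\ga,\de)$. Then pick $\be<\al$ in $C'\cap E$, both large, with $f_n(\al)<\be$ for all $n$.

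The contradiction is obtained by exhibiting two distinct points $\be<\al$ of $C$ with $(\be,\al)\in W_n$ for every $n$, i.e. $\be\in\ilpc(f_n(\al),\al]$ for all $n$. This contradicts $\bigcap_n W_n=\D_C$. To find them, choose $\al\in C'$ with $\cf\al>\mu$ when possible, so that $g(\al)<\al$ and any $\be\in C\cap\ilpp(g(\al),\al)$ works.

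When no such $\al$ exists (the case $\la=\mu^+$), we need two points $\be<\al$ in the Fodor-homogenized stationary set $S=\bigcap S_n$ so that $\be>\ga_n=f_n(\al)$ for all $n$. This requires the constant values $\sup_n\ga_n<\la$, which holds by regularity of $\la$ since $\mu<\la$. Then any $\be<\al$ in $S$ above that sup works. The one point needing care is making $\bigcap_n S_n$ stationary; I would handle this by the successive shrinking described above, relying on $\la$-completeness of the nonstationary ideal at each limit stage of the iteration.
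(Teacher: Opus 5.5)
Your upper bound is fine, and choosing regressive $f_n$ with $\bigl((f_n(\al),\al]\cap C\bigr)^2\subset W_n$ is the right start. The lower bound, however, has a genuine gap exactly at the point you flag.

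Your plan needs one $\al$ with $f_n(\al)<\be<\al$ for \emph{all} $n\in I$ at once. In the hard case you get it from a stationary set $\bigcap_n S_n$ on which every $f_n$ is constant. No such set need exist, even for $\mu=\om$. The $\la$-completeness of the nonstationary ideal says that fewer than $\la$ clubs meet in a club. It says nothing about intersections of stationary sets: if $T_k$ ($k\in\om$) are disjoint stationary sets, the sets $S_n=\bigcup_{k\ge n}T_k$ are decreasing and stationary, yet $\bigcap_n S_n=\varnothing$.

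More to the point, the principle you need is false. That principle is that countably many regressive functions on a stationary set are simultaneously constant on a stationary set. Counterexample: on the limit ordinals of $\om_1$, let $f_n(\al)$ be the $n$-th term of a fixed cofinal sequence in $\al$. Then $\al=\sup_n f_n(\al)$, so no two points share all values. In your Step~1 each $f_n(\al)$ may be freely increased towards $\al$, so for $\la=\om_1$, $\mu=\om$ nothing prevents your $f_n$ from having this form. Then $g(\al)=\al$ for every limit $\al\in C$, and no single $\al$ works.

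The club $E$ does not help: ``pick $\be<\al$ in $C'\cap E$ with $f_n(\al)<\be$ for all $n$'' simply restates the required conclusion. Also, the bad case is not only $\la=\mu^+$, since $C$ may consist entirely of ordinals of cofinality $\om$ for any $\la$.

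The missing idea is that you never need a single $\al$ that is good for all $n$. Apply Fodor's lemma to each $f_n$ separately to get a stationary, hence unbounded, $S_n$ with $f_n\equiv\ga_n$ on $S_n$. For $\al<\al'$ in $S_n$ the sets $(\ga_n,\al]\cap C\subset(\ga_n,\al']\cap C$ are nested, so
\[
\bigl((\ga_n,\la)\cap C\bigr)^2=\bigcup_{\al\in S_n}\bigl((\ga_n,\al]\cap C\bigr)^2\subset W_n .
\]
That is, each $W_n$ contains an entire tail square of $C$. Because $\mu<\la$ and $\la$ is regular, $\ga=\sup_{n\in I}\ga_n<\la$. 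Any two points $\be<\al$ of $C$ above $\ga$ then give $(\be,\al)\in\bigcap_n W_n\setminus\D_C$, the desired contradiction. This is the paper's argument; it needs no case split on cofinalities and no intersection of stationary sets.
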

\begin{proof}
Since $|C|=\la$, we have $\D(C)\leq\la$. Suppose $\D(C)\neq\la$. Then there is a family $\cW$ of open neighbourhoods of the diagonal $\D_C$ in the square $C\times C$ such that $\bigcap\cW=\D_C$ and $|\cW|<\la$.

For $W\in\cW$ find $x_W\in C$ such that $U_W\times U_W\subset W$, where $U_W=\set{x\in C: x> x_W}$. Let $c_0=\min C$ and $C_*=C\setminus \sset{c_0}$. For each $x\in C_*$ there is $f(x)\in C$, $f(x)<x$, such that $V_x \times V_x\subset W$, where $V_x=\set{y\in C: f(x)< y \leq x}$. By Fodor's lemma, the set $f^{-1}(x_W)$ is stationary in $\la$ for some $x_W\in C$. Then $U_W\times U_W=\bigcup_{x\in f^{-1}(x_W)}V_x\times V_x\subset W$.

Since $\la$ is a regular cardinal and $|\cW|<\la$, there is $a\in C$ such that $x_W<a$ for all $W\in \cW$. Take $b\in C$, $a<b$. Then $(a,b)\notin \D_C$ and $(a,b)\in W$ for all $W\in\cW$, contradicting $\bigcap\cW=\D_C$.
\end{proof}

\begin{theorem}\label{t:main:1+1}
Let $G$ be a semitopological group that is a GO space.
Then $G$ is hereditarily paracompact and
\[
P(X) = wt(X)= \pi\chi(X) = \D(X) =
\chi(X) = \psi(X) = t(X).
\]
\end{theorem}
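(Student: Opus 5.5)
The plan is to reduce everything to one statement: the identity $e$ is a \wob/-point of $G$, i.e.\ $\cf_l(e,G)=\cf_r(e,G)$ whenever $e$ is not isolated. Left translations $x\mapsto gx$ are homeomorphisms of $G$, so $G$ is homogeneous, and by Proposition \ref{p:1}(1) $\sp_t(x,G)=\sp_t(G)$ for every $x$. If $e$ is a \wob/-point, then $|\sp_t(G)|\le 1$ by Proposition \ref{p:1+2}. Proposition \ref{p:1+1} then gives $\pi\chi(x,G)=wt(x,G)=P(x,G)=\chi(x,G)=\psi(x,G)=t(x,G)$ at every point, hence equality of the global invariants $P$, $wt$, $\pi\chi$, $\chi$, $\psi$, $t$. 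The remaining invariant $\D$ is sandwiched: $\D(G)\ge\psi(G)$ always, so it suffices to prove $\D(G)\le\chi(G)$.

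For hereditary paracompactness I will use the Engelking--Lutzer criterion: a GO space is hereditarily paracompact if and only if it contains no subspace homeomorphic to a stationary subset of a regular uncountable cardinal. Suppose $C\subset\la$ is stationary and embedded in $G$. By Proposition \ref{p:main:1}, $\D(C)=\la$, so $\D(G)\ge\la$. On the other hand, points of $C$ that are limits of $C$ have character $\cf\al<\la$ inside $C$. I will try to use translations to move the left-sided cofinal sequence at such a point $c$ to $e$. Applying Fodor's lemma to the stationary set of limit points, I expect a single cardinal $\mu<\la$ to occur as $\cf_l$ on a stationary set, and I want to show $\chi(G)\le\mu$ there; then the inequality $\D(G)\le\chi(G)$ gives $\D(G)<\la$, contradicting Proposition \ref{p:main:1}. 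So both parts hinge on the bound $\D(G)\le\chi(G)$ together with the \wob/ property.

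For $\D(G)\le\chi(G)$ I would take a base $\cU$ at $e$ of size $\chi(G)$ and set $W_U=\bigcup_{g\in G} gU\times gU$. These sets are open because left translations are homeomorphisms, and they contain $\D_G$. Their intersection is $\D_G$ provided that, for each $a\ne e$, some $U\in\cU$ satisfies $a\notin U^{-1}U$. Here only separate continuity is available. My plan is to take $U$ to be convex intervals, using the linear order: if $a>e$, choose $U$ convex with $Ua$ disjoint from $U$, using continuity of the right translation $h\mapsto ha$ at $e$ and at $a^{-1}$. This yields $gU\cap gUa=\es$ and hence excludes the pair.

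The main obstacle is proving $\cf_l(e)=\cf_r(e)$. Suppose $\la=\cf_l(e)<\rho=\cf_r(e)$. Take a strictly decreasing sequence $\sq{y_\al}{\al<\rho}$ converging to $e$ from the right. The map $x\mapsto x^{-1}$ need not be continuous, but right translations $h\mapsto hy_\al^{-1}$ are continuous, and I would use them to transport left neighbourhoods of $y_\al$ back to $e$. This should produce, inside $\ilpp(e,+\infty)$, a closed copy of a stationary subset of $\rho$ (points $y_\al$ with $\cf\al=\om$, or with $\cf\al=\la$), which is then ruled out by the paracompactness argument above. To avoid circularity, I would prove paracompactness first, using only points where the one-sided characters are computed within $C$.
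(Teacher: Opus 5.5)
There is a genuine gap. Your reduction is sound as far as it goes, and your proof of $\Delta(G)\le\chi(G)$ is correct. But the step everything hinges on, namely that $e$ is a WOB-point, i.e.\ $\cf_l(e,G)=\cf_r(e,G)$, is not proved, and the plan for it is circular.

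\emph{The WOB sketch.} Transporting neighbourhoods by the right translations $h\mapsto hy_\alpha^{-1}$ does not show that the infima $\inf_{\alpha<\gamma}y_\alpha$ lie in $G$ for stationarily many $\gamma<\rho$. So no stationary copy is actually produced.

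\emph{The circularity.} You want to rule that copy out by hereditary paracompactness proved ``first'', but your paracompactness argument cannot work without WOB. Suppose $F\subset G$ is homeomorphic to a stationary $C\subset\lambda$. A non-isolated $x\in F$ only gives
$wt(x,G)=\pi\chi(x,G)\le T(x,F\setminus\{x\})<\lambda$.
This bounds only the smaller of $\cf_l(x,G)$ and $\cf_r(x,G)$. The other one may be $\ge\lambda$, in which case $\chi(G)\ge\lambda$, and $\Delta(G)\le\chi(G)$ gives no contradiction with $\Delta(G)\ge\lambda$. Fodor's lemma does not change this. So each half of your argument presupposes the other.

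\emph{What is missing} is the stronger bound $\Delta(G)\le\pi\chi(G)$ for semitopological groups. The paper quotes this from the literature, and it makes WOB an output rather than an input. Your own construction already gives it if you take $B$ from a $\pi$-base $\mathcal B$ at $e$ instead of from a base:
\begin{itemize}
\item $W_B=\bigcup_{g\in G}gB\times gB$ is still open.
\item $W_B$ still contains $\Delta_G$, because $B\neq\varnothing$: for $b\in B$ we have $x\in(xb^{-1})B$.
\item For $a\neq e$, take disjoint open sets $O\ni e$ and $O'\ni a$, then an open $V\ni e$ with $Va\subset O'$ (continuity of $h\mapsto ha$), then $B\in\mathcal B$ with $B\subset V\cap O$. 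Then $Ba\cap B=\varnothing$, i.e.\ $a\notin B^{-1}B$.
\end{itemize}
Hence $\chi(G)=\psi(G)\le\Delta(G)\le\pi\chi(G)\le\chi(G)$. Together with Proposition~\ref{p:1+1} this makes all seven invariants equal to a common value $\tau$; this is the paper's chain.

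Hereditary paracompactness then follows at once. A copy $F$ of a stationary $C\subset\lambda$ gives $\lambda=\Delta(F)\le\Delta(G)=\tau$ by Proposition~\ref{p:main:1}. On the other hand, by homogeneity $\tau=wt(x,G)\le T(x,F\setminus\{x\})<\lambda$ at any non-isolated $x\in F$, a contradiction. Your Engelking--Lutzer criterion works here just as well as the Balogh--Rudin theorem used in the paper.
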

\begin{proof}
Let $\tau=P(X)$. By Proposition \ref{p:1+1} and the homogeneity of $X$, $wt(X)= \pi\chi(X) = \tau$.
Since $G$ is a semitopological group and $\pi\chi(X) = \tau$, we get $\D(X)\leq\tau$ (\cite[Theorem 1]{Reznichenko2023-1}, \cite[Theorem 2.4]{arh-rezn2005}, and \cite[Corollary 5.7.5]{at2009} for countable $\tau$). Since $\psi(X)\leq \D(X)$, Proposition \ref{p:1+1} and the homogeneity of $X$ give $\chi(X) = t(X) = \psi(X) \leq \D(X)\leq \tau$. Since $\pi\chi(X)\leq \chi(X)$, we get $\chi(X) = t(X) = \psi(X) = \D(X) = \tau$.

Let us prove that $G$ is hereditarily paracompact. Suppose otherwise. Since a GO space is monotonically normal, $G$ is a monotonically normal, non-hereditarily-paracompact space. By \cite[Theorem I]{BaloghRudin1992}, there is a regular cardinal $\la$ and a stationary subset $C\subset \la$ homeomorphic to some $F\subset G$. By Proposition \ref{p:main:1}, $\D(F)=\la$. Since $\D(F)\leq \D(X)$, we get $\la\leq \tau$. A stationary subset is not discrete, so let $x\in F$ be a non-isolated point of $F$. Then $\tau=wt(x,X)\leq T(x,F\setminus\sset x)<\la$, contradicting $\la\leq \tau$.
\end{proof}

\begin{theorem}\label{t:main:2}
Let $G$ be a semitopological group that is a GO space and not a $P$-space.
Then $G$ has countable character, is submetrizable, and is hereditarily paracompact.
\end{theorem}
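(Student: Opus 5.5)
We would reduce everything to Theorem \ref{t:main:1+1}, which already gives hereditary paracompactness together with the equalities $P(G)=wt(G)=\pi\chi(G)=\D(G)=\chi(G)=\psi(G)=t(G)$. So only countable character and submetrizability remain to be proved.

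\emph{Countable character.} Since $G$ is not a $P$-space, some point $x\in G$ is not a $P$-point. We will show $P(x,G)=\om$. Being not a $P$-point means there is a countable family $\cU$ of neighbourhoods of $x$ with $x\notin\Int\bigcap\cU$; this gives $P(x,G)\le\om$. If $x$ were isolated we would have $G$ discrete, hence a $P$-space, which is excluded. So $P(x,G)=\om$, and since $G$ is homogeneous, $P(G)=\om$. Theorem \ref{t:main:1+1} then gives $\chi(G)=\D(G)=\om$. We should check that the supremum convention is consistent here: by homogeneity every point has the same value of $P$, so $P(G)=\om$ holds exactly.

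\emph{Submetrizability.} We have $\D(G)=\om$, so the diagonal is a $G_\de$ in $G\times G$. A paracompact space with a $G_\de$-diagonal need not be submetrizable in general, so we will use the GO structure. The known route is that a GO space with a $G_\de$-diagonal is submetrizable; we would cite this, or rather use the more usable metrization theory for GO spaces. Concretely, a GO space has a $G_\de$-diagonal if and only if it has a $\si$-point-finite (quasi-)$G_\de$-diagonal sequence. Lutzer's theorem says a GO space is metrizable if and only if it has a $G_\de$-diagonal, but that is only for LOTS. For GO spaces the standard results are that a GO space with a $G_\de$-diagonal is quasi-developable, and that a GO space with a $\si$-closed-discrete dense set is submetrizable.

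The alternative, which we would try first, avoids this literature. Take a countable family $\{W_n\}$ of open neighbourhoods of $\D_G$ with $\bigcap W_n=\D_G$, and use the group structure to produce a coarser metrizable topology. By separate continuity we can take symmetric open neighbourhoods $U_n$ of $e$ with $U_n\times U_n\subset$ a suitable translate condition, so that $\bigcap_n U_n=\{e\}$ and $\{U_n\}$ is a local base at $e$ (countable character). We would then define the coarser topology through left translates $gU_n$. The difficulty is that in a semitopological group the sets $gU_n$ do not generate a group topology, and a metric requires a triangle-type inequality $U_{n+1}U_{n+1}\subset U_n$, which fails without joint continuity.

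For this reason we expect the main obstacle to be submetrizability, and we would fall back on the ordered approach. A GO space $G$ with $G_\de$-diagonal and countable character embeds as a dense subspace of a LOTS $Y$ with the same properties on the relevant points. A LOTS with $G_\de$-diagonal is metrizable by Lutzer's theorem, and the restriction of the metric topology of $Y$ to $G$ is then a coarser metrizable topology. Two things must be verified. First, the Lutzer-type extension must preserve the $G_\de$-diagonal; we would attempt this via the characterization by a $\si$-disjoint family of convex open sets separating points. Second, the resulting metric topology must be coarser than the topology of $G$. The second point is clear, because the induced topology from $Y$ is the given topology of $G$.
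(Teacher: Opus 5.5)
Your countable-character and hereditary-paracompactness steps match the paper: a non-$P$-point $x$ gives $P(x,G)=\omega$, homogeneity gives $P(G)=\omega$, and Theorem \ref{t:main:1+1} yields $\chi(G)=\Delta(G)=\omega$ together with hereditary paracompactness. The gap is submetrizability, which the proposal never establishes.

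The decisive error is your claim that a paracompact space with a $G_\delta$-diagonal need not be submetrizable. It always is, and this one-line fact is exactly the paper's step. Here is why it holds:
\begin{itemize}
\item With $\bigcap_n W_n=\Delta_G$, put $\mathcal{G}_n=\{U\text{ open}: U\times U\subset W_n\}$; then $\bigcap_n \mathrm{st}(x,\mathcal{G}_n)=\{x\}$.
\item By paracompactness, choose open covers $\mathcal{U}_n$ with $\mathcal{U}_n$ refining $\mathcal{G}_n$ and $\mathcal{U}_{n+1}$ star-refining $\mathcal{U}_n$.
\item By the standard metrization lemma, this normal sequence yields a continuous pseudometric $d$ such that, for each $n$, some $d$-ball around $x$ lies in $\mathrm{st}(x,\mathcal{U}_n)\subset\mathrm{st}(x,\mathcal{G}_n)$.
\item Hence $d(x,y)=0$ forces $y=x$, so $d$ is a metric whose topology is coarser than that of $G$.
\end{itemize}
You already had both ingredients, paracompactness and $\Delta(G)=\omega$, and needed neither the group structure nor the order structure at this point.

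Your fallback ordered approach cannot be repaired, because it proves too much. Suppose $G$ is a subspace of a LOTS $Y$ with a $G_\delta$-diagonal. Then $Y$ is metrizable by Lutzer's theorem, and hence so is $G$. As your own ``second point'' observes, the metric of $Y$ would induce exactly the topology of $G$, giving metrizability rather than mere submetrizability.

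That conclusion is false under the hypotheses of the theorem. The Sorgenfrey line (Example \ref{e:2}) is a first countable GO paratopological group, not a $P$-space, and not metrizable. So no LOTS containing it can have a $G_\delta$-diagonal, and your ``first thing to verify'' fails in general. This is also why the paper treats separately the case where $G$ is itself a LOTS (Theorem \ref{t:main:3}).

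The translation-based attempt you sketch is abandoned without a result. As written, the proposal therefore leaves submetrizability unproved.
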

\begin{proof}
Since $G$ is not a $P$-space, $P(x,G)=\om$ for some $x\in G$.
By Theorem \ref{t:main:1+1}, $\chi(G)=\D(G)=\om$ and $G$ is hereditarily paracompact.
Paracompact spaces with a $G_\de$-diagonal are submetrizable \cite[Corollary 2.9]{gru1984}.
Note also that GO spaces with a $G_\de$-diagonal are (hereditarily) paracompact \cite[Theorem 4.5]{Lutzer1971}.
\end{proof}

In $P$-spaces compact subsets are finite.
In submetrizable spaces compact subsets are metrizable.
Hence the following statement follows from Theorem \ref{t:main:2}.

\begin{cor}\label{с:main:1}
In semitopological GO groups compact subsets are metrizable.
\end{cor}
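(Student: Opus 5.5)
The plan is a case split according to whether $G$ is a $P$-space, using only facts recorded just above the statement. Let $G$ be a semitopological group that is a GO space and let $K\subset G$ be compact.

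\emph{Case 1: $G$ is not a $P$-space.} By Theorem \ref{t:main:2}, $G$ is submetrizable. So there is a continuous bijection $f\colon G\to M$ onto a metrizable space $M$. The restriction $\restr{f}{K}$ is a continuous bijection from the compact space $K$ onto the Hausdorff space $f(K)$. It is therefore a homeomorphism, and $K$ is metrizable.

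\emph{Case 2: $G$ is a $P$-space.} I will show that every compact subset of a (Hausdorff) $P$-space is finite; finite Hausdorff spaces are discrete and hence metrizable. Suppose $K$ were infinite. Then $K$ contains a countably infinite discrete subset $D=\set{d_n:\nom}$, by the standard fact that any infinite Hausdorff space has one.

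In a $P$-space countable unions of closed sets are closed, since their complements are countable intersections of open sets. Hence every subset of $D$ is closed in $G$, being a countable union of points. It follows that $D$ is a closed discrete infinite subset of $K$. A closed subspace of a compact space is compact, and an infinite discrete space is not compact, so this is a contradiction.

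No step is a real obstacle, because all the content sits in Theorem \ref{t:main:2}. The only point needing care is the elementary fact that compact subsets of $P$-spaces are finite. If one prefers to avoid choosing a discrete sequence, there is a direct argument. Suppose $K$ were infinite and take distinct points $x_n\in K$. The set $A=\set{x_n:\nom}$ is closed, and in fact every subset of $A$ is closed. Thus $A$ is an infinite closed discrete subspace of the compact $K$, which is impossible.
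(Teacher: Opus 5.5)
Your proposal is correct and matches the paper's argument: the paper also splits on whether $G$ is a $P$-space, using that compact subsets of $P$-spaces are finite and that compact subsets of submetrizable spaces, which Theorem \ref{t:main:2} provides in the non-$P$-space case, are metrizable. The only difference is that you write out the routine verifications of these two facts, which the paper states without proof.
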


\begin{theorem}\label{t:main:3}
Let $G$ be a semitopological group that is a LOTS and not a $P$-space.
Then $G$ is metrizable.
\end{theorem}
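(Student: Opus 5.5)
By Theorem \ref{t:main:2}, $G$ has countable character and a $G_\de$-diagonal, and it is hereditarily paracompact. So the plan is to invoke a metrization criterion for LOTS. The natural candidate is the classical theorem of Lutzer: a LOTS is metrizable if and only if it has a $G_\de$-diagonal. Since $\D(G)=\om$ by Theorem \ref{t:main:1+1}, metrizability then follows immediately. The only issue is that this criterion holds for LOTS but fails for GO spaces; the Sorgenfrey line is the standard counterexample. This explains why the statement is restricted to LOTS and why the GO case only yields submetrizability in Theorem \ref{t:main:2}.

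If I want to avoid citing Lutzer's theorem, I would argue through a $\si$-discrete (or point-countable) base. A paracompact LOTS is metrizable as soon as it is \emph{quasi-developable}, or equivalently has a $G_\de$-diagonal. The usual route is this: in a LOTS, one reduces to checking that the set $E$ of jump points is $\si$-discrete in an appropriate sense. Equivalently, one shows that the \emph{Sorgenfrey-like} parts cannot occur. In a LOTS, every point that is not a jump point is two-sided, and by homogeneity the local structure is uniform. Here homogeneity via translations can be used: every point of $G$ has the same local type. So either $G$ is discrete, hence metrizable, or no point is isolated.

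In the non-discrete case, I would split into two cases. In the first, the jump points are absent; then every point has both $\cf_l=\cf_r=\om$ by countable character and Proposition \ref{p:1+1.5}. The space then has a $G_\de$-diagonal and is a LOTS without jumps, and standard arguments give metrizability. In the second, jump points exist; then by homogeneity every point would be a left or right jump point. In a LOTS, the jumps $\jumps G$ would then make $G$ look locally like the double arrow. The $G_\de$-diagonal rules out uncountably many jumps inside a compact interval, because compact subsets are metrizable by Corollary \ref{с:main:1}.

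The main obstacle is correctly applying the LOTS metrization theorem (Lutzer: a LOTS with $G_\de$-diagonal is metrizable), so I would cite it directly. Combined with $\D(G)=\om$, it finishes the proof. The remaining care is only to confirm that "LOTS" here means that the topology is exactly the order topology, not a GO topology, which is the hypothesis of the theorem.
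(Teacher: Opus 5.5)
Your proposal is correct and follows the paper's proof: Theorem~\ref{t:main:2} (through $\D(G)=\om$ from Theorem~\ref{t:main:1+1}) gives a $G_\delta$-diagonal, and Lutzer's theorem that a LOTS with a $G_\delta$-diagonal is metrizable finishes the argument. Discard the optional ``Lutzer-free'' sketch in the middle, though, because it contains false steps; for instance, homogeneity does not force every point to be a jump point once one exists, as the Cantor set is a homogeneous LOTS with both jump points and non-jump points.
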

\begin{proof}
By Theorem \ref{t:main:2}, $G$ has a $G_\de$-diagonal.
A LOTS with a $G_\de$-diagonal is metrizable \cite{Lutzer1969}, \cite[Theorem 2.3]{gru1984}.
Hence $G$ is metrizable.
\end{proof}

\begin{theorem}[{\cite[Theorem 2.6]{BuzyakovaVural2014}, \cite[Theorem 3.5]{WilliamsZhou1998}}]
\label{t:main:3+1}
Let $G$ be a monotonically normal paratopological group. Then $G$ is hereditarily paracompact and every compact subset of $G$ has countable tightness.
\end{theorem}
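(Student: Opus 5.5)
The statement is attributed to \cite[Theorem 2.6]{BuzyakovaVural2014} and \cite[Theorem 3.5]{WilliamsZhou1998}, so I plan to assemble it from these two results rather than reprove them from scratch. The argument splits into two independent parts: hereditary paracompactness, and countable tightness of compacta.

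For hereditary paracompactness I would follow the same route as in Theorem~\ref{t:main:1+1}, now using the continuity of multiplication in place of separate continuity. Suppose $G$ is not hereditarily paracompact. By \cite[Theorem I]{BaloghRudin1992}, $G$ contains a closed copy $F$ of a stationary subset $C$ of some regular uncountable cardinal $\la$. The group structure then has to rule this out. Since $G$ is a paratopological group, it is a homogeneous space, so it suffices to exploit a point $x\in F$ that is a limit point of $F$ of cofinality $\om$; such points exist because $C$ is stationary.

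This step is the main obstacle. One needs either a bound on the diagonal number, or a Fodor-type pressing-down argument applied to neighbourhoods of the identity $U$ with $U\cdot U\subset V$. Continuity of multiplication supplies a ``uniformity'' $\set{(g,h): g^{-1}h\in U}$. Using it, I would show that on a stationary set the pressing-down function stabilizes. This would make the left-uniform neighbourhoods of distinct points of $C$ overlap, contradicting Hausdorffness together with the monotone normality operator. For the details I would simply invoke \cite[Theorem 2.6]{BuzyakovaVural2014}.

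For the second part, let $K\subset G$ be compact. Then $K$ is a compact subspace of a hereditarily paracompact monotonically normal space. By \cite[Theorem 3.5]{WilliamsZhou1998}, compact subsets of monotonically normal spaces in which no stationary set embeds have countable tightness. The first part guarantees exactly this hypothesis: hereditary paracompactness excludes embedded stationary sets, again by the Balogh--Rudin theorem. Combining the two parts gives the theorem.
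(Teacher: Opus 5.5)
Your proposal is correct and is essentially the paper's proof: the paper cites \cite[Theorem 2.6]{BuzyakovaVural2014} for hereditary paracompactness and then \cite[Theorem 3.5]{WilliamsZhou1998} to get countable tightness of compact subsets. You can drop the exploratory sketch in your second and third paragraphs, since you invoke the citation anyway; note also that its claim that a stationary set must have a non-isolated point of countable cofinality is false (take a stationary subset of $\om_2$ consisting of ordinals of cofinality $\om_1$).
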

\begin{proof}
Monotonically normal paratopological groups are hereditarily paracompact \cite[Theorem 2.6]{BuzyakovaVural2014}, and hence compact subsets of such groups have countable tightness \cite[Theorem 3.5]{WilliamsZhou1998}.
\end{proof}


\begin{theorem}\label{t:main:2+1}
Let $G$ be a topological group that is a GO space and not a $P$-space.
Then $G$ is metrizable.
\end{theorem}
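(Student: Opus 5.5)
We reduce to the known fact that a topological group with a $G_\de$-diagonal and countable character is metrizable. A topological group is in particular a semitopological group, so Theorem \ref{t:main:2} applies to $G$. It shows that $G$ has countable character. (It also shows that $G$ is submetrizable and hereditarily paracompact, but only the character bound is needed.) By the Birkhoff--Kakutani theorem, a first-countable Hausdorff topological group is metrizable. $G$ is Hausdorff, indeed Tychonoff, because every GO space is monotonically normal and hence $T_1$ and regular.

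The argument therefore has two steps. First, from the assumption that $G$ is not a $P$-space, pick $x\in G$ with $P(x,G)=\om$. Theorem \ref{t:main:1+1} then gives $\chi(G)=P(G)=\om$. Here one uses homogeneity: all the pointwise invariants are constant on $G$, so $P(G)=P(x,G)=\om$. Second, apply Birkhoff--Kakutani to conclude that $G$ is metrizable.

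The main point to check is that Theorem \ref{t:main:1+1} really yields $\chi(G)=\om$ for topological groups without extra hypotheses. The chain of equalities there relies on the inequality $\D(G)\le\pi\chi(G)$ for semitopological groups. For topological groups this inequality is classical: a topological group of countable $\pi$-character has countable character. So the case of interest is fully covered.

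An alternative route, which avoids Birkhoff--Kakutani, is also available. Theorem \ref{t:main:2} shows that $G$ is submetrizable and paracompact with a $G_\de$-diagonal. One could then use the fact that a paracompact $p$-space with a $G_\de$-diagonal is metrizable, since first-countable spaces are $p$-spaces in Tychonoff settings; more precisely, a first-countable GO space with a $G_\de$-diagonal has a $G_\de$-diagonal and is a $\be$-space. The Birkhoff--Kakutani route is cleaner, however, and I would present that one.
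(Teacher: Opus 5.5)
Your main argument is correct and is exactly the paper's proof: Theorem \ref{t:main:2} (equivalently Theorem \ref{t:main:1+1}) gives $\chi(G)=\om$, and the Birkhoff--Kakutani theorem then gives metrizability. Drop the ``alternative route'', though: it is false that first-countable Tychonoff (or GO) spaces are $p$-spaces, and no argument that uses only countable character, a $G_\de$-diagonal and paracompactness can succeed, because the Sorgenfrey line (Example \ref{e:2}, a GO paratopological group) has all of these properties and is not metrizable --- continuity of inversion, which Birkhoff--Kakutani uses, is essential.
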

\begin{proof}
By Theorem \ref{t:main:2}, $G$ has countable character. Since $G$ has countable character, the Birkhoff--Kakutani theorem \cite[Theorem 3.3.12]{at2009} implies that the group $G$ is metrizable.
\end{proof}




\begin{theorem}\label{t:cssoh:1}
Let $X$ be an order-homogeneous LOTS.
Then every compact subset of $X$ is first countable.
\end{theorem}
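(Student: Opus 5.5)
The plan is to use that order automorphisms preserve the one-sided cofinalities, and then to produce inside $K$ a point where one of these cofinalities is visibly $\om$.

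\emph{Step 1 (uniform cofinalities).} An order automorphism $f\in\oip X$ maps $\lL(x,X)$ onto $\lL(f(x),X)$ and $\lR(x,X)$ onto $\lR(f(x),X)$ order-isomorphically. Hence $\cf_l(x,X)=\cf_l(f(x),X)$ and $\cf_r(x,X)=\cf_r(f(x),X)$. Order-homogeneity then gives cardinals $\la,\rho$ with $\cf_l(x,X)=\la$ and $\cf_r(x,X)=\rho$ for every $x\in X$. This step genuinely needs order-preserving maps: an order-reversing map would swap $\la$ and $\rho$, which is why mere homogeneity only gives the weaker Theorem~\ref{t:main:1}.

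\emph{Step 2 (reduction to one-sided limits in $K$).} Let $K\subset X$ be compact and $x\in K$. I regard $K$ as a GO space with the order inherited from $X$. Write $L_*=K\cap\lL(x,X)$ and $R_*=K\cap\lR(x,X)$.
\begin{itemize}
\item If $x$ is isolated in $K$, there is nothing to prove.
\item Otherwise Proposition~\ref{p:1+1} gives $\chi(x,K)=\max\sp_t(x,K)$.
\item By Proposition~\ref{p:1+0}(4), $\sp_t(x,K)$ consists of $\cf_l(x,K)$ when $x\in\cl{L_*}$ and of $\cf_r(x,K)$ when $x\in\cl{R_*}$.
\item By Proposition~\ref{p:1+0}(1,2), applied with $Y=X$ and the subset $K$, in these cases $\cf_l(x,K)=\cf_l(x,X)=\la$ and $\cf_r(x,K)=\cf_r(x,X)=\rho$ respectively.
\end{itemize}
So it suffices to show: if some point of $K$ is a limit from the left in $K$, then $\la=\om$; and symmetrically, if some point is a limit from the right in $K$, then $\rho=\om$.

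\emph{Step 3 (the key step).} Suppose $x\in\cl{L_*}$ with $x\notin L_*$. Then $L_*$ has no maximum, so I can choose a strictly increasing sequence $\set{x_n:\nom}\subset L_*$. By compactness this infinite set has an accumulation point $z\in K$. Since the sequence is increasing, $z$ is its supremum in $K$, and $z\notin\set{x_n:\nom}$. Every $k\in K$ with $k<z$ lies below some $x_n$, since otherwise $\ilpp(k,z)$ would be a neighbourhood of $z$ missing the sequence. Thus $\set{x_n}$ is cofinal in $K\cap\lL(z,X)$, so $\cf_l(z,K)=\om$. By Proposition~\ref{p:1+0}(1), $\cf_l(z,X)=\om$, and by Step~1 this gives $\la=\om$. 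The right-hand case is symmetric, using a strictly decreasing sequence and its infimum, and gives $\rho=\om$.

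\emph{Conclusion and main obstacle.} Combining Steps 2 and 3, $\sp_t(x,K)\subset\sset{\om}$ for every non-isolated $x\in K$. Hence $\chi(x,K)=\om$ for all $x\in K$, and $K$ is first countable. The main point to get right is Step~3: one must transfer the countable one-sided cofinality found inside $K$ at the auxiliary point $z$ back to $\cf_l(z,X)$, via Proposition~\ref{p:1+0}(1). Only then can Step~1 propagate $\cf_l=\om$ from $z$ to the originally given point $x$.
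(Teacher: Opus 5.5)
Your proof is correct. It rests on the same mechanism as the paper's proof: order automorphisms make $\cf_l$ and $\cf_r$ constant on $X$, while compactness forces a point of $K$ with countable one-sided cofinality. The execution, however, differs.

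The paper argues by contradiction. From $\chi(K)>\om$ it extracts a monotone copy $f(\om_1+1)\subset K$ and compares $\cf_l(f(\om),X)=\om$ with $\cf_l(f(\om_1),X)=\om_1$. The existence of that copy is stated without detail; it needs a transfinite construction, taking suprema in $K$ at limit stages and using that the bad side has uncountable cofinality.

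You need only one increasing $\om$-sequence and its supremum $z$ in $K$. You then let Propositions~\ref{p:1+0} and~\ref{p:1+1} translate $\chi(x,K)$ into the global cofinalities $\la,\rho$. This is more elementary and fully self-contained. It also shows slightly more: a single left (right) accumulation point of any compact subset already forces $\la=\om$ ($\rho=\om$) on all of $X$.

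The paper's formulation has its own merit: it isolates the concrete obstruction $\om_1+1$. This contrasts with Example~\ref{e:4}, where $\om_1+1$ does embed in the $\pm$-order-homogeneous space $X_{\om_1}$.

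One slip to fix in Step~3: the neighbourhood of $z$ that misses the sequence is $\ilpp(k,+\infty)$, not $\ilpp(k,z)$, which does not contain $z$.
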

\begin{proof}
Let $K\subset X$ be compact. Suppose $\chi(K)>\om$. Then $\om_1+1$ embeds monotonically in $K$. Without loss of generality, let $f: \om_1+1 \to K$ be an increasing embedding. Then $\om=\cf_l(f(\om),X)\neq \cf_l(f(\om_1),X)=\om_1$, contradicting the order-homogeneity of $X$.
\end{proof}


\section{The product of linearly ordered spaces with the lexicographic order} \label{sec:lex}


Let $X$ be a linearly ordered set, $\la$ an ordinal. On the set $X^\la$ we consider the lexicographic order: $f<g$ for distinct $f,g\in X^\la$ if $f(\al)<g(\al)$, where $\al=\min \set{\be<\la: f(\be)\neq g(\be)}$. 
Denote $\blex f\al\la X=\set{g\in X^\la: g(\be)=f(\be)\text{ for }\be<\al}$ for $\al<\la=\dom f$.

On $X^\la$ we consider the order topology.

\begin{proposition}\label{p:2}
If $X$ is a linearly ordered set with no greatest and no least element, $\la$ is a limit ordinal, and $f\in\lex \la X$, then the family $\set{\blex f\al \la X: \al<\la}$ forms a base of open neighbourhoods of the point $f$ in the linearly ordered space $\lex \la X$.
\end{proposition}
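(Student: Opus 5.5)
We will prove two things: each set $\blex f\al\la X$ is an open neighbourhood of $f$, and every open interval containing $f$ contains some $\blex f\al\la X$.

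\emph{Openness.} Fix $\al<\la$. Since $X$ has no least and no greatest element, we can choose $a,b\in X$ with $a<f(\al)<b$. Define $g_-$ and $g_+$ in $\lex\la X$ by
\[
g_\pm(\be)=f(\be)\ \text{for}\ \be<\al,\qquad g_-(\al)=a,\qquad g_+(\al)=b,
\]
with arbitrary values for $\be>\al$. Then $\ilpp(g_-,g_+)$ is an open interval containing $f$. It is not itself contained in $\blex f\al\la X$, because its elements may differ from $f$ at the coordinate $\al$. So instead we show that every $h\in\blex f\al\la X$ is an interior point. Given such $h$, the same construction with $h$ in place of $f$ at coordinate $\al$ yields an interval around $h$. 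To keep this interval inside the ball, we modify it at coordinate $\al+1<\la$ (this exists because $\la$ is limit). Let
\[
g_\pm(\be)=h(\be)\ \text{for}\ \be\le\al,\qquad g_-(\al+1)<h(\al+1)<g_+(\al+1).
\]
Any $k$ with $g_-<k<g_+$ agrees with $h$ on $\al+1$ coordinates: the first coordinate at which $k$ differs from $g_-$ (respectively $g_+$) cannot lie at or below $\al$ without violating one of the two inequalities. Hence $k\in\blex h{\al+1}\la X\subset\blex f\al\la X$. This also shows that the balls are open, and $f$ clearly belongs to every $\blex f\al\la X$.

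\emph{Base property.} Let $U=\ilpp(p,q)$ contain $f$; the half-infinite intervals are handled in the same way, using only one side. Since $p<f$, let $\ga_p=\min\set{\be: p(\be)\ne f(\be)}$; similarly define $\ga_q$. Put $\al=\max(\ga_p,\ga_q)+1<\la$, using again that $\la$ is a limit ordinal. Any $h\in\blex f\al\la X$ agrees with $f$ up to and including coordinate $\ga_p$, so the first difference between $p$ and $h$ occurs at $\ga_p$ with $p(\ga_p)<f(\ga_p)=h(\ga_p)$. Thus $p<h$, and symmetrically $h<q$. Hence $\blex f\al\la X\subset U$.

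The main point to watch is the openness step. The naive interval built at coordinate $\al$ is too large, and the fix is to work at coordinate $\al+1$. This uses both hypotheses: the limit ordinal $\la$ guarantees that $\al+1<\la$, and the absence of endpoints in $X$ guarantees that the bounds $g_-(\al+1)$ and $g_+(\al+1)$ exist. The remaining verifications are routine lexicographic comparisons.
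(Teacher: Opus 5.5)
Your proof is correct and is essentially the paper's argument: openness is shown by placing each point of the ball in an open interval contained in the ball, and the base property by taking the index $\max(\gamma_p,\gamma_q)+1$. The one flaw is your remark that the interval built at coordinate $\alpha$ is not contained in $\mathfrak{B}_X(f,\alpha)$; this is false, since the ball only fixes coordinates $\beta<\alpha$, so your own first-difference argument shows that interval already lies in it (this is what the paper uses), which makes the passage to $\alpha+1$ harmless but unnecessary and means the limit hypothesis is needed only in the base step.
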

\begin{proof}
Let us show that the set $\blex f\al \la X$ is open. Let $g\in \blex f\al \la X$. There exist $a,b\in \blex f\al \la X$ such that $a(\al)<g(\al)<b(\al)$. Then $g\in (a,b)\subset \blex f\al \la X$.

Let us show that the family $\set{\blex f\al \la X: \al<\la}$ forms a base of neighbourhoods of $f$. Let $a,b\in \lex \la X$ and $a< f <b$. Let $\al=\min\set{\de<\la: a(\de)\neq f(\de)}$, $\be=\min\set{\de<\la: b(\de)\neq f(\de)}$, $\te=\max \al,\be$. Then $f\in \blex f{\te+1} \la X\subset (a,b)$.
\end{proof}

\begin{proposition}\label{p:h:tg1}
Let $\la$ be a regular cardinal and $G=\lex \la \R$ with the order topology.
Then $G$ is a LOTS topological group and $\chi(G)=\la$.
The space $G$ is a $P_\la$-space.
\end{proposition}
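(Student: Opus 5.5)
We take the group operation on $G=\lex\la\R$ to be coordinatewise addition, $(f+g)(\al)=f(\al)+g(\al)$, and use the lexicographic order. The plan is: (i) show that translations are order automorphisms and inversion is an order reversing automorphism; (ii) prove that $G$ is a $P_\la$-space and compute the character from the base of Proposition \ref{p:2}; (iii) deduce continuity of the operations from the neighbourhood base at the identity.

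\emph{Step (i).} The group $G$ is abelian. For $f<g$ let $\al$ be the first coordinate where $f$ and $g$ differ. Then for every $h$ the first coordinate where $f+h$ and $g+h$ differ is again $\al$, and $(f+h)(\al)<(g+h)(\al)$. Hence every translation $f\mapsto f+h$ is an order automorphism. Likewise $f\mapsto -f$ reverses the order. By the remark in Section \ref{sec:defs}, both maps are therefore homeomorphisms of the LOTS $G$.

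\emph{Step (ii).} Since $\R$ has no least or greatest element and $\la$ is a limit ordinal, Proposition \ref{p:2} applies. So at each $f$ the sets $\blex f\al\la\R$, $\al<\la$, form a decreasing base of open neighbourhoods.

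For the $P_\la$ property, let $\cU$ be a family of neighbourhoods of $f$ with $|\cU|<\la$. For each $U\in\cU$ choose $\al_U<\la$ with $\blex f{\al_U}\la\R\subset U$. By regularity of $\la$, $\be=\sup_U\al_U<\la$. Then $\blex f\be\la\R\subset\bigcap\cU$, so $\bigcap\cU$ is a neighbourhood of $f$.

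For the character, the base above gives $\chi(f,G)\le\la$. The family $\set{\blex f\al\la\R:\al<\la}$ has intersection $\sset f$, and $f$ is not isolated. Hence $\sset f$ is not open, so $f$ is not an interior point of this intersection, and $P(f,G)\le\la$. On the other hand, $G$ is a $P_\la$-space, so $P(f,G)\ge\la$. Since $\chi\ge P$, we get $\chi(G)=\la$. Alternatively, the same conclusion follows from Proposition \ref{p:1+1.5}, since $\cf_l(f,G)=\cf_r(f,G)=\la$: the sequence $f-e_\al$, where $e_\al$ is the indicator of the coordinate $\al$, increases to $f$ cofinally.

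\emph{Step (iii), the main point.} Let $\mathbf 0$ be the identity and write $B_\al=\blex{\mathbf 0}\al\la\R$. Each $B_\al$ is a subgroup of $G$, since it consists of the sequences vanishing below $\al$. Also $\blex f\al\la\R=f+B_\al$.

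For continuity of addition at $(f,g)$, take a basic neighbourhood $f+g+B_\al$ of $f+g$. Then
\[(f+B_\al)+(g+B_\al)=f+g+B_\al.\]
For continuity of inversion, $-(f+B_\al)=-f+B_\al$.

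Thus $G$ is a topological group whose topology is given by the order, and it is a LOTS topological group. The only delicate point is making sure the order topology coincides with the coset topology of the subgroups $B_\al$. This is exactly what Proposition \ref{p:2} supplies, and it needs $\R$ to have no endpoints.
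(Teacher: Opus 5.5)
Your proof is correct and takes the same route as the paper. The paper's proof only notes that, by Proposition \ref{p:2}, the open subgroups $\blex 0\al\la\R$, $\al<\la$, form a neighbourhood base at zero. You fill in what that leaves implicit: the coset bases $f+B_\al$ at every point, continuity of addition and inversion, and the regularity argument giving the $P_\la$ property and $\chi(G)=\la$.
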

\begin{proof}
On $G$ we consider the group operation induced from $\R^\la$, addition of functions. By Proposition \ref{p:2}, the family of open subgroups $\set{\blex 0\al \la \R: \al<\la}$ forms a base of neighbourhoods of the zero function.
\end{proof}

The following statement follows from \cite[Proposition 1.3(b)]{AkinHrbacek2002}.

\begin{proposition}\label{p:h:1}
Let $\la$ be an ordinal and $X$ a compact LOTS.
Then the LOTS $\lex \la X$ is compact.
\end{proposition}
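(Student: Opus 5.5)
By Proposition \ref{p:1+4}(2), a LOTS is compact if and only if it has no gaps and has a minimum and a maximum. The plan is to verify these two conditions for $\lex \la X$ directly, which is what \cite[Proposition 1.3(b)]{AkinHrbacek2002} does.

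If $X$ is empty or $\la=0$ the statement is trivial, so assume $X\neq\es$ and $\la>0$. Since $X$ is compact, $X$ has a minimum $0_X$ and a maximum $1_X$. The constant functions with values $0_X$ and $1_X$ are then the minimum and maximum of $\lex \la X$.

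\textbf{No gaps.} Let $(A,B)$ be a proper cut of $\lex \la X$. We build $h\in \lex\la X$ by transfinite recursion on $\al<\la$, maintaining the invariant
\[
\text{both } A\cap\blex h\al\la X \text{ and } B\cap\blex h\al\la X \text{ are nonempty, or one of them is empty and } h\restriction\al \text{ is extreme for the other.}
\]
The cleaner way to run this is the following. At step $\al$, with $h\restriction\al$ fixed, look at the cut of $X$ induced by $(A,B)$ on the set $\blex h\al\la X$, sorted by the value $g(\al)$.

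\begin{itemize}
\item Let $A_\al$ be the set of values $x\in X$ such that some $g\in A$ extends $h\restriction\al$ with $g(\al)\geq x$.
\item Let $h(\al)=\sup A_\al$ in $X$. This supremum exists because $X$ is compact: a compact LOTS is complete, having no gaps.
\end{itemize}

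Equivalently, we define $h$ as the pointwise-by-level ``supremum'' of $A$. Then we show that $h$ is either $\max A$ or $\min B$, so the cut is not a gap.

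\textbf{Main obstacle: limit stages.} The key case is a limit $\al$. There one must check that the sets $\blex h\al\la X=\bigcap_{\be<\al}\blex h\be\la X$ still meet the appropriate side of the cut. If they do not, one shows directly that $h$ itself sits at the boundary:
\begin{itemize}
\item every $g<h$ differs from $h$ first at some $\be$ with $g(\be)<h(\be)=\sup A_\be$, which forces $g\in A$;
\item symmetrically, every $g>h$ lies in $B$.
\end{itemize}
Hence $h$ is the maximum of $A$ or the minimum of $B$, and $(A,B)$ is not a gap. The delicate point is precisely this argument that no element strictly between $A$ and $B$ escapes; it relies on the fact that the lexicographic comparison of $g$ and $h$ is decided at the first index of difference.

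Alternatively, one could simply cite \cite[Proposition 1.3(b)]{AkinHrbacek2002}, as the paper indicates.
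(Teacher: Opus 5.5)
Your proof is correct. It differs from the paper only in that the paper gives no argument: it simply cites \cite[Proposition 1.3(b)]{AkinHrbacek2002}, whereas you verify the criterion of Proposition \ref{p:1+4}(2) directly.

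The decisive step is sound. Define $h(\al)=\sup\{\,g(\al): g\in A,\ \restr{g}{\al}=\restr{h}{\al}\,\}$, taking $\sup\es=\min X$; this is legitimate because a nonempty compact LOTS has endpoints and no gaps, so every subset has a supremum. If $g<h$ and they first differ at $\be$, then $g(\be)<h(\be)$ is not an upper bound of these values. So some $g'\in A$ has $\restr{g'}{\be}=\restr{g}{\be}$ and $g'(\be)>g(\be)$, giving $g<g'$ and hence $g\in A$. If $g>h$ and they first differ at $\be$, then $g\in A$ would force $g(\be)\le h(\be)$, so $g\in B$. Hence $h=\max A$ or $h=\min B$.

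That already finishes the proof, so you should delete the garbled ``invariant'' and the ``limit stages'' paragraph. No property of $h$ at limit ordinals is used, since every comparison with $h$ is decided at a single coordinate. Also, the case $g>h$ is not symmetric to $g<h$; it is the easier one, immediate from $h(\be)$ being an upper bound.

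As for what each approach buys: the citation is a one-liner, while your argument is elementary and self-contained, and it shows that completeness of $X$ with endpoints is all that is used. A direct order-theoretic argument is natural here. The lexicographic order topology on $X^\la$ is in general not the product topology (for $X=\B$ and $\la=\om+1$ the minimum is isolated), so Tychonoff's theorem is not available.
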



\section{Binary sequences with the lexicographic order} \label{sec:blex}

Henceforth we regard $\B$ as the linearly ordered space $\sset{0,1}$.
Let $\la$ be an ordinal.
Henceforth on $\lexab$ we consider the lexicographic order and the order topology.

Denote by $R_1$ the order-reversing bijection of $\B$ onto itself: $R_1(0)=1$ and $R_1(1)=0$.
Denote
\[
R_\la=(R_1)^\la: \lexab\to \lexab,\ f\mapsto R_1\circ f.
\]
The map $R_\la$ is
an order reversing isomorphism of $\lexab$ onto itself.

Denote by $0_\la$ (resp., $1_\la$) the function on $\la$ identically equal to zero (resp., one). Note that $0_\la =\min \lexab$ and $1_\la =\max \lexab$.

\begin{proposition}\label{p:blex:1}
Let $\la$ be a regular cardinal, $x\in \lexab$, $\ka_0=\cf x^{-1}(0)$ and $\ka_1=\cf x^{-1}(1)$. Then 
\begin{enumerate}
\item
$\cf_l(x,\lexab)=\ka_1$, $\cf_r(x,\lexab)=\ka_0$, and $\la\in\sset{\ka_0,\ka_1}$;
\item
$\chi(x,\lexab)=\la$, 
\[
\pi\chi(x,\lexab)=\begin{cases}
\min \sset{\ka_0, \ka_1}, & \ka_0\text{ and }\ka_1\text{  are infinite,}
\\
\la, & \ka_0\text{ or }\ka_1\text{ is finite.}
\end{cases}
\]
\item
the point $x$ is a \wob/-point if and only if one of the following conditions holds:
\begin{enumerate}
\item
$|x^{-1}(0)|=|x^{-1}(1)|=\la$;
\item
$x^{-1}(0) =\es$  or $x^{-1}(1) =\es$;
\item
the set $x^{-1}(0)$ has a maximum;
\item
the set $x^{-1}(1)$ has a maximum.
\end{enumerate}
\item
the point $x$ is a left jump point if and only if the set $x^{-1}(0)$ has a maximum; denoting $\mu = \max x^{-1}(0)$, in this case $x=\restr x\mu \symblecnc 0 \symblecnc 1_\la$ and $r=\restr x\mu \symblecnc 1 \symblecnc 0_\la$ is the right jump point;
\item
the point $x$ is a right jump point if and only if the set $x^{-1}(1)$ has a maximum; denoting $\mu = \max x^{-1}(0)$, in this case $x=\restr x\mu \symblecnc 1 \symblecnc 0_\la$ and $l=\restr x\mu \symblecnc 0 \symblecnc 1_\la$ is the left jump point.
\end{enumerate}
\end{proposition}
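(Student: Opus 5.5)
The plan is to compute $\cf_l$ and $\cf_r$ at $x$ directly in the lexicographic order, and then read off items (2)--(5) from Propositions \ref{p:1+1.5}, \ref{p:1+0}(3) and the definition of a jump.

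For item (1), first compute $\cf_l(x,\lexab)$. An element $y<x$ is determined by its first point of difference $\al$, at which $y(\al)=0$ and $x(\al)=1$; after $\al$ the values of $y$ are arbitrary. For each $\al\in x^{-1}(1)$ put
\[
y_\al=\restr x\al\symblecnc 0\symblecnc 1_\la .
\]
This is the largest element below $x$ that differs from $x$ first at $\al$. Moreover $\al<\be$ in $x^{-1}(1)$ implies $y_\al<y_\be$. Hence $\set{y_\al:\al\in x^{-1}(1)}$ is cofinal in $\lL(x,\lexab)$ and order isomorphic to $x^{-1}(1)$, so $\cf_l(x,\lexab)=\cf x^{-1}(1)=\ka_1$. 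The conventions $\cf\es=0$ and $\cf=1$ when a maximum exists match the cases of no predecessors and of a left neighbour. Applying the reversal $R_\la$, which swaps $0$ and $1$ and reverses the order, gives $\cf_r(x,\lexab)=\ka_0$. Since $\la=x^{-1}(0)\cup x^{-1}(1)$ and $\la$ is regular, one of the two sets is unbounded in $\la$, so its cofinality is $\la$. Hence $\la\in\sset{\ka_0,\ka_1}$.

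Items (2) and (3) then follow from Proposition \ref{p:1+1.5} and the regularity of $\la$.
\begin{itemize}
\item[(2)] We get $\chi(x,\lexab)=\om+\max(\ka_0,\ka_1)=\la$, because both $\ka_0,\ka_1\le\la$ and one of them equals $\la$. For the $\pi$-character, $\om+\min(\ka_0,\ka_1)$ equals $\min(\ka_0,\ka_1)$ when both are infinite. When one is finite, the same formula gives $\om$, so the stated value $\la$ in that case needs a closer look. The issue is whether the formula is applied to the cofinalities themselves or whether a side with cofinality $0$ or $1$ does not accumulate. If $\ka_1\in\sset{0,1}$, the point is isolated from the left, so Proposition \ref{p:1+0}(4) gives $\sp_t=\sset{\ka_0}=\sset\la$. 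Then Proposition \ref{p:1+1} yields $\pi\chi=\la$, which is the intended reading when the other side is the unbounded one. I expect this bookkeeping of the degenerate cases $0,1$ to be the only delicate point.
\item[(3)] We use Proposition \ref{p:1+0}(3). The point is a \wob/-point iff one side does not accumulate, i.e.\ $\ka_0\le1$ or $\ka_1\le1$, or else $\ka_0=\ka_1$ (necessarily $=\la$). The case $\ka_i=0$ is (b), and $\ka_i=1$ is (c) or (d). The case $\ka_0=\ka_1=\la$ means both sets are unbounded in the regular $\la$, which is equivalent to both having size $\la$, i.e.\ (a). Conversely, unequal infinite cofinalities $\ka_0\ne\ka_1$ force one set to be bounded with infinite cofinality. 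Such a set has no maximum, is nonempty, and has size $<\la$, so it satisfies none of (a)--(d).
\end{itemize}

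For items (4) and (5): $x$ is a left jump point iff $\cf_r(x)=1$, i.e.\ $\ka_0=1$, i.e.\ $x^{-1}(0)$ has a maximum $\mu$. Then $x$ equals $1$ after $\mu$, so $x=\restr x\mu\symblecnc0\symblecnc1_\la$. Its successor is obtained by flipping, giving $r=\restr x\mu\symblecnc1\symblecnc0_\la$, and one checks directly that nothing lies strictly between $x$ and $r$. Item (5) is symmetric via $R_\la$, with $\mu=\max x^{-1}(1)$ (the statement's ``$x^{-1}(0)$'' there is evidently a typo).
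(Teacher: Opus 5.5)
Your proof is correct and follows the paper's argument. You compute $\cf_l(x,\lexab)$ from a cofinal family indexed by $x^{-1}(1)$ and transfer it to $\cf_r$ via $R_\la$. You obtain $\chi$ and $\pi\chi$ from $\sp_t(x,\lexab)$ through Propositions~\ref{p:1+0}(4) and~\ref{p:1+1}, which is exactly how the paper gets the value $\la$ in the degenerate case, so you were right not to use the literal formula of Proposition~\ref{p:1+1.5} there. You then read off jump points from $\cf_r=1$ and $\cf_l=1$.

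Two small points in your favour. Your family $\restr x\al\symblecnc 0\symblecnc 1_\la$ treats $\ka_1\in\{0,1\}$ and $\ka_1\geq\om$ at once, where the paper splits into cases. You are also right that in (5) $\mu$ should be $\max x^{-1}(1)$.
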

\begin{proof}
Let us prove (1). Since $\la$ is a regular cardinal and $\la=x^{-1}(0)\cup x^{-1}(1)$, either $\ka_0=\la$ or $\ka_1=\la$.
Denote $M=x^{-1}(1)$. Then $\ka_1=\cf M$.

If $\ka_1=0$, then $x=0_\la$ and $\cf_l(x,\lexab)=\ka_1=0$.

Consider the case $\ka_1=1$. Then $\mu=\max M$ exists.
Set $l=\restr x\mu \symblecnc 0 \symblecnc 1_\la$.
Then $l$ is a left jump point, $x$ is a right jump point, $l=\max \ilpp(-\infty,x)$, and $\cf_l(x,\lexab)=\ka_1=1$.

Consider the case $\ka_1\geq\om$.
Let $\set{\eta_\al:\al<\ka_1}\subset M$ be a strictly increasing cofinal sequence.
For $\al<\ka_1$ define $y_\al\in \lexab$:
\begin{equation*}
y_\al(\nu) = 
\begin{cases}
x(\nu),	& \nu\neq \eta_\al; 
\\
0,	& \nu= \eta_\al.
\end{cases}
\end{equation*}
Then the strictly increasing sequence $\set{y_\al:\al<\ka}\subset \lL(x,\lexab)$ converges to $x$.
Hence $\cf_l(x,\lexab)=\ka_1$.

Since $x^{-1}(0)=\left(R_\la(x)\right)^{-1}(1)$, we have
\[
\cf_r(x,\lexab)=\cf_l(R_\la(x),\lexab)=\cf \left(R_\la(x)\right)^{-1}(1)= \cf x^{-1}(0)=\ka_0.
\]

Let us prove (2). By (1) and Propositions \ref{p:1+0} and \ref{p:1+1}, $\chi(x,\lexab)=\la$. If $\ka_0$ or $\ka_1$ is finite, then $\sp_t(\lexab)=\sset \la$, and by Proposition \ref{p:1+1}, $\pi\chi(x,\lexab)=\la$. Otherwise, $\sp_t(\lexab)=\sset{\ka_0,\ka_1}$, and by Proposition \ref{p:1+1}, $\pi\chi(x,\lexab)=\min \ka_0,\ka_1$.

Let us prove (3). By (2), each of conditions (a), (b), (c), (d) is equivalent to $\pi\chi(x,\lexab)=\chi(x,\lexab)$. By Proposition \ref{p:1+2}, the latter equality is equivalent to $x$ being a \wob/-point.

Let us prove (4). The point $x$ is a left jump point if and only if $\cf_r(x,\lexab)=1$, that is, by (1), $\cf x^{-1}(0)=\ka_0=1$. The condition $\cf x^{-1}(0)=1$ is equivalent to $x^{-1}(0)$ having a maximal element. Let $\mu = \max x^{-1}(0)$. Then $x=\restr x\mu \symblecnc 0 \symblecnc 1_\la$, and the right jump point $r=\min \lR(x,X)$ equals $\restr x\mu \symblecnc 1 \symblecnc 0_\la$.

Item (5) follows from (4).
\end{proof}

For a regular cardinal $\la$ and a cardinal $\ka$ set
\begin{align*}
\homg l,\ka\la &= \set{ x\in \lexab: \cf_l(x,\lexab) = \ka \text{ and } \cf_r(x,\lexab) = \la},
\\
\homg r,\ka\la &= \set{ x\in \lexab: \cf_r(x,\lexab) = \ka  \text{ and } \cf_l(x,\lexab) = \la},
\\
\homg lr,\ka\la &= \homg l,\ka\la \cup \homg r,\ka\la,
\\
\homm\la &= \set{ x\in \lexab: \cf_r(x,\lexab) = \la  \text{ and } \cf_l(x,\lexab) = \la},
\end{align*}

Clearly, $\homm\la = \homg l,\la\la = \homg r,\la\la = \homg lr,\la\la$.

Note that by Proposition \ref{p:blex:1}(1), if $\ka$ is a regular cardinal and $\ka<\la$, then $\homg l,\ka\la = \set{ x\in \lexab: \cf_l(x,\lexab) = \ka}$ and $\homg r,\ka\la = \set{ x\in \lexab: \cf_r(x,\lexab) = \ka}$.

\begin{proposition}\label{p:blex:1+1}
Let $\la$ be a regular cardinal.
\begin{enumerate}
\item
$\homg l,0\la=\sset{1_\la}$ and $\homg r,0\la=\sset{0_\la}$;
\item
$\homg l,1\la$ is the set of right jump points and $\homg r,1\la$ is the set of left jump points; these sets are dense in $\lexab$;
\item
if $\ka\leq \la$ is a regular cardinal, then 
\begin{enumerate}
\item
the sets $\homg l,\ka\la$ and $\homg r,\ka\la$ are dense in $\lexab$;
\item
the topologies of $\homg l,\ka\la$, $\homg r,\ka\la$ and $\homg lr,\ka\la$ coincide with the order topology with respect to the order inherited from $\lexab$; accordingly, the spaces listed are LOTS;
\item
if $X\in\sset{\homg l,\ka\la,\homg r,\ka\la,\homg lr,\ka\la}$ and $x\in X$, then $\chi(x,X)=\la$ and $\pi\chi(x,X)=\ka$;
\item
the spaces $\homg l,\ka\la$, $\homg r,\ka\la$ and $\homg lr,\ka\la$ are $P_\ka$-spaces;
\end{enumerate}
\item
if $\ka< \la$ is a regular cardinal, then 
\begin{enumerate}
\item
the sets $\homg l,\ka\la$ and $\homg r,\ka\la$ are disjoint;
\item
$\homg lr,\ka\la=\set{x\in\lexab: \pi\chi(x,\lexab)=\ka}$.
\end{enumerate}
\item
the family
\begin{gather*}
\{ \homg l,0\la, \homg r,0\la, \homg l,1\la, \homg r,1\la, \homm\la \} \cup 
\\
\set{\homg l,\ka\la, \homg r,\ka\la: \ka<\la\text{ is a regular cardinal}}
\end{gather*}
is a partition of $\lexab$;
\item
if $\ka>\om$ is a regular ordinal, then compact subsets of $\homg l,\ka\la$, $\homg r,\ka\la$ and $\homg lr,\ka\la$ are finite;
\item
the space $\homg lr,\om\la$ is countably compact;
\item
the transfinite line $\om_1+1$ embeds in the space $\homg lr,\om{\om_1}$.
\end{enumerate}
\end{proposition}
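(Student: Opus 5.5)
The plan is to reduce every item to Proposition \ref{p:blex:1}(1), which computes $\cf_l(x,\lexab)=\cf x^{-1}(1)$ and $\cf_r(x,\lexab)=\cf x^{-1}(0)$ and shows that one of them equals $\la$. Once this is in hand, the items become bookkeeping about which cofinality pairs $(\ka_1,\ka_0)$ occur and where.

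\emph{Items (1), (2), (4), (5).}
\begin{itemize}
\item For (1), $\cf_l(x)=0$ iff $x^{-1}(1)=\es$, which would make $x=0_\la$. So presumably the intended reading is $\homg l,0\la=\sset{0_\la}$, with the labels as stated in the paper; I would simply check which of $0_\la,1_\la$ has empty left side.
\item Item (2) is Proposition \ref{p:blex:1}(4),(5). Density: given $a<b$, let $\al$ be the first coordinate where they differ and pass to a longer initial segment. Any prescribed tail can then be appended past a large coordinate to land in $\ilpp(a,b)$. Tails of the form $0\fun 1_\la$ give jump points.
\item Item (4) follows because $\ka<\la$ forces the other cofinality to be $\la\neq\ka$. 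By Proposition \ref{p:1+1.5} together with (1), $\pi\chi=\min=\ka$ exactly on $\homg lr,\ka\la$.
\item Item (5) is the case split on the values of $\ka_0,\ka_1\in\{0,1\}\cup\{\text{regular}\le\la\}$, again using that one of them is $\la$.
\end{itemize}

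\emph{Item (3).}
\begin{itemize}
\item (a) Density uses the same tail trick, with the tail $1$ placed on a cofinal subset of order type $\ka$ and $0$ elsewhere, or the reverse.
\item (b) I would apply Proposition \ref{p:1+0a}(2)(a). For $x$ in the subspace $X$, both sides are limits of points of $X$ in $\lexab$. The approximating sequences $y_\al$ from the proof of Proposition \ref{p:blex:1}, which change one coordinate, have the same tail cofinalities as $x$ except near that coordinate, so they stay in $X$. The cofinalities $\cf_l(x,X)$ and $\cf_r(x,X)$ are then unchanged, and these sequences witness $x\in\cl{L_x}\cap\cl{R_x}$.
\item (c) Given (b), Proposition \ref{p:1+1.5} gives $\chi=\max=\la$ and $\pi\chi=\min=\ka$.
\item (d) This is a $P_\ka$ property. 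A family of fewer than $\ka$ neighbourhoods of $x$ contains intervals with endpoints below and above $x$. Since both side cofinalities in $X$ are at least $\ka$ and regular, there are an endpoint bound below and one above, and the interval between them lies in the intersection.
\end{itemize}

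\emph{Items (6)–(8).}
\begin{itemize}
\item (6) By (3)(d) the space is a $P_\ka$-space with $\ka\ge\om_1$, in particular a $P$-space. Its compact subsets are therefore finite, as noted before Corollary \ref{с:main:1}.
\item (7) This item is the main obstacle. An infinite subset contains a monotone sequence, say increasing, $\{x_n\}$. Its supremum $s$ exists in the compact space $\lexab$ (Proposition \ref{p:h:1}) and has $\cf_l(s)=\om$. Then $\cf_r(s)$ must be $\la$ by (1), so $s\in\homg l,\om\la$. A decreasing sequence is handled symmetrically via $R_\la$. The care needed is to ensure $s$ is not among the $x_n$ and is a genuine cluster point in the subspace topology, which (3)(b) provides.
\item (8) I would construct $f(\al)$ for $\al\le\om_1$ increasing and continuous. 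A natural choice is $f(\al)=1_{\om\cdot\al}\fun 0\fun\ldots$, arranged so that each value has $\cf$ of the $1$-set equal to $\om$ and a $0$-tail of length $\om_1$, with suitable interleaving at limits. Continuity at limits $\al<\om_1$ holds because sups are taken coordinatewise on initial segments. At $\om_1$ itself the value is set to a point of $\homg lr,\om{\om_1}$ of $\om$-cofinality on one side, for instance by using $R_\la$ to reflect so that it lies in $\homg r,\om{\om_1}$. Verifying that $f(\om_1)$ is the supremum of the earlier values while still lying in the subspace is the delicate step. I expect to adjust the construction, placing the reflection on the right side, so that the $\om_1$-limit occurs on the $\la$-cofinal side.
\end{itemize}
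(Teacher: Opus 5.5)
\paragraph{The gap: item (8).} Your ``natural choice'' $f(\alpha)=1_{\omega\cdot\alpha}{}^\frown 0_{\omega_1}$ has three good properties: it is strictly increasing, it is continuous at countable limits, and it lies in $\mathfrak{D}_l(\omega,\omega_1)$ for $\alpha\geq 1$. But $\sup_{\alpha<\omega_1}f(\alpha)=1_{\omega_1}$ is the maximum of $\mathbf{2}^{\omega_1}$, and that point is not in $\mathfrak{D}_{lr}(\omega,\omega_1)$. You cannot ``set'' $f(\omega_1)$ to some other point, because continuity forces $f(\omega_1)=\sup_{\alpha<\omega_1}f(\alpha)$. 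Composing with $R_{\omega_1}$ does not help either: it only produces a decreasing sequence converging to $0_{\omega_1}$. Your proposal leaves exactly this step unresolved.

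The missing idea is to fix the endpoint first. A point of $\mathfrak{D}_{lr}(\omega,\omega_1)$ that is the supremum of a strictly increasing $\omega_1$-sequence has $\cf_l=\omega_1$. So it lies in $\mathfrak{D}_r(\omega,\omega_1)$, i.e.\ it has the form $s{}^\frown 1_{\omega_1}$ with $s^{-1}(0)$ of cofinality $\omega$.

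The paper takes such a point $r$ and, using the density from (3)(a), picks a strictly increasing sequence $(x_\alpha)_{\alpha<\omega_1}\subset\mathfrak{D}_l(\omega,\omega_1)$ converging to $r$. Its closure is a copy of $\omega_1+1$. The only new points are $r$ and the suprema at countable limit stages. These have $\cf_l=\omega$, hence $\cf_r=\omega_1$ by Proposition \ref{p:blex:1}(1), so they stay in the subspace.

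Your explicit map can be repaired by a prefix. Take $s\in\mathbf{2}^\omega$ with $s(2n)=0$, $s(2n+1)=1$. Put $f(\alpha)=s{}^\frown 1_{\omega\cdot\alpha}{}^\frown 0_{\omega_1}$ for $\alpha<\omega_1$ and $f(\omega_1)=s{}^\frown 1_{\omega_1}$. Then $f(\alpha)\in\mathfrak{D}_l(\omega,\omega_1)$ for all $\alpha<\omega_1$, and $f(\omega_1)\in\mathfrak{D}_r(\omega,\omega_1)$. Also $f$ is strictly increasing and preserves suprema at limits. Hence it is a continuous injection of the compact space $\omega_1+1$, i.e.\ an embedding.

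\paragraph{The other items.} The rest of your plan is sound and largely parallels the paper. Two of your routes differ from the paper's, and both work:
in (3)(b) you use Proposition \ref{p:1+0a}(2)(a) with the one-coordinate perturbations, where the paper uses density plus the absence of jump points;
in (7) you take the supremum of a monotone sequence, where the paper uses a closure argument through $wt=\pi\chi$ and (4)(b).

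You are right that (1) has its labels swapped. Since $\cf_l(x,\mathbf{2}^\lambda)=\cf x^{-1}(1)=0$ forces $x=0_\lambda$, we get $\mathfrak{D}_l(0,\lambda)=\{0_\lambda\}$ and $\mathfrak{D}_r(0,\lambda)=\{1_\lambda\}$.

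Two small corrections. For (4)(b) with $\kappa=\omega$, cite Proposition \ref{p:blex:1}(2), not Proposition \ref{p:1+1.5}. As printed, the latter gives $\pi\chi=\omega$ at the endpoints and jump points, whose true $\pi$-character is $\lambda$, and so would wrongly place them in $\{x:\pi\chi(x,\mathbf{2}^\lambda)=\omega\}$. In (7), the step ``$\cf_r(s)$ must be $\lambda$'' needs $\lambda>\omega$. For $\lambda=\omega$ the space $\mathfrak{D}_{lr}(\omega,\omega)$ is not countably compact: the points $1_n{}^\frown 0{}^\frown t$, with $t$ alternating, converge to $1_\omega$. The paper's argument makes the same tacit assumption.
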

\begin{proof}
Items (1) and (2) follow from Proposition \ref{p:blex:1} (1) and (2).

Let us prove (3). Let us prove (3)(a). Let $U$ be a nonempty open subset of $\lexab$. By Proposition \ref{p:1+6}, there exist $a,b\in \lexab$ such that $\ilcc[a,b]\subset U$ is a nonempty open subset of $\lexab$, $a$ is the minimum of $\lexab$ or a right jump point, and $b$ is the maximum of $\lexab$ or a left jump point. By Proposition \ref{p:blex:1}(1), $\cf_r(a,\lexab)=\cf_l(b,\lexab)=\la$. Let $\sq{x_\al}{\al<\la}\subset \ilcc[a,b]$ be a strictly increasing sequence converging to the point $b$. Let $y$ be the limit of the sequence $\sq{x_\al}{\al<\ka}$. Then $y\in \ilcc[a,b] \cap \homg l,\ka\la\subset U$. Similarly, $\homg r,\ka\la\cap U\neq\es$.

Let us prove (3)(b). The sets $\homg l,\ka\la$ and $\homg r,\ka\la$ are dense in $\lexab$ and contain no jump points and no minimum or maximum. Then (b) follows from Proposition \ref{p:1+0a}.

Item (3)(c) follows from Proposition \ref{p:blex:1} (1) and (2).

Let us prove (3)(d). By Proposition \ref{p:1+1} and item (4)(c), $P(x,\lexab)=\ka$ for all $x\in \homg lr,\ka\la$. Hence $\homg lr,\ka\la$ is a $P_\ka$-space.

Let us prove (4). Item (4)(a) follows from $\ka<\la$ and Proposition \ref{p:blex:1} (1) and (2).
Item (4)(b) follows from $\ka<\la$, Proposition \ref{p:blex:1} (1), (2), and Proposition \ref{p:1+1}.

Item (5) follows from the previous items.

Let us prove (6). By (3)(d), $\lexab$ is a $P$-space. Hence compact subsets of $\homg lr,\ka\la$ are finite.

Let us prove (7). Let $M\subset \homg lr,\om\la$ and $|M|=\om$. Let $F$ be the closure of $M$ in $\lexab$. Let us show that $F\subset \homg lr,\om\la$. Let $x\in F\setminus M$. Then $wt(x,\lexab)=\om$. By Proposition \ref{p:1+1}, $\pi\chi(x,\lexab)=\om$. By (5)(b), $x\in \homg lr,\om\la$. Thus $F\subset \homg lr,\om\la$.

Let us prove (8). Let $r\in \homg r,\om{\om_1}$. Take a strictly increasing sequence $\sq{x_\al}{\al<\om_1}\subset \homg l,\om{\om_1}$ converging to $r$. Let $F$ be the closure of $\sq{x_\al}{\al<\om_1}$ in $\lexab$. Then $F\subset \homg lr,\om{\om_1}$ and $F$ is homeomorphic to $\om_1+1$.
\end{proof}

\begin{proposition}\label{p:blex:2-1}
Let $f: X\to Y$ be a surjective increasing map between LOTS $X$ and $Y$.
If $f^{-1}(y)$ is a closed interval in $X$ for every $y\in Y$, then $f$ is continuous.
\end{proposition}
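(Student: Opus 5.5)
The plan is to check continuity on the subbasic open sets of the order topology on $Y$, namely the open rays $\ilpp(-\infty,b)$ and $\ilpp(a,+\infty)$. It suffices to show that each preimage $f^{-1}(\ilpp(a,+\infty))$ is open in $X$. The case of $f^{-1}(\ilpp(-\infty,b))$ is symmetric: pass to $\rev X$ and $\rev Y$. The map $f$ is still increasing from $\rev X$ to $\rev Y$, its fibres are still closed intervals, and the order topologies do not change.

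Fix $a\in Y$ and set $U=f^{-1}(\ilpp(a,+\infty))$. Since $f$ is increasing, $U$ is an upward closed subset of $X$: if $x\in U$ and $x<x'$, then $f(x')\geq f(x)>a$. The complement of $U$ is $f^{-1}(\ilpc(-\infty,a])$, which is downward closed and contains the fibre $F=f^{-1}(a)$. This fibre is nonempty because $f$ is surjective, and by hypothesis it is a closed interval $\ilcc[p,q]$. Here ``closed interval'' should be read as an interval with both endpoints in $X$, so that $q=\max F$ exists. I expect the only real care in the proof to lie here, in making sure the hypothesis supplies the endpoint $q$.

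Next we identify $U$. If $x>q$, then $f(x)\geq a$ and $x\notin F$, so $f(x)>a$ and $x\in U$. If $x\leq q$, then $f(x)\leq f(q)=a$, so $x\notin U$. Hence $U=\ilpp(q,+\infty)$, which is an open ray in the order topology of $X$, and $U$ is open.

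By symmetry, $f^{-1}(\ilpp(-\infty,b))=\ilpp(-\infty,p')$, where $p'=\min f^{-1}(b)$. Preimages of subbasic sets are therefore open, and $f$ is continuous. There is no substantial obstacle in this argument; the only point to watch is that the fibres over the points $a$ and $b$ are nonempty, which is exactly what surjectivity of $f$ provides.
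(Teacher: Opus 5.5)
Your proof is correct and follows essentially the same route as the paper. The paper also reduces continuity to the open rays. Writing $f^{-1}(y)=[u,v]$, it observes that $f^{-1}((-\infty,y))=(-\infty,u)$ and $f^{-1}((y,+\infty))=(v,+\infty)$, which is exactly your identification of these preimages via the endpoints of the fibres.
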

\begin{proof}
It suffices to show that $f^{-1}(\lL(y,-\infty))$ and $f^{-1}(\lR(y,+\infty))$ are open for every $y\in Y$.
Let $\ilcc[u,v]=f^{-1}(y)$. Then $f^{-1}(\lL(y,-\infty))=\lL(u,-\infty)$ and $f^{-1}(\lR(y,-\infty))=\lR(v,+\infty)$.
\end{proof}

For ordinals $\ka\leq \la$ denote by
\[
\pi_\ka^\la: \lexab \to \B^\ka,\ x\mapsto \restr x\ka.
\]
the projection.

The following statement follows from Proposition \ref{p:blex:2-1}.

\begin{proposition}\label{p:blex:2}
Let $\ka\leq \la$ be ordinals. The projection $\pi^\la_\ka: \lexab \to \B^\ka$ is continuous.
\end{proposition}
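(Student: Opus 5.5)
The plan is to deduce Proposition~\ref{p:blex:2} from Proposition~\ref{p:blex:2-1}, applied with $X=\lexab$, $Y=\B^\ka$ and $f=\pi^\la_\ka$. This requires three facts: $\pi^\la_\ka$ is surjective, it is increasing, and each fibre is a closed interval of $\lexab$.

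\emph{Surjectivity.} Given $y\in\B^\ka$, the function $y\symblecnc 0$ (extending $y$ by zeros on $\la\setminus\ka$) is a preimage of $y$.

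\emph{Monotonicity.} Let $x<x'$ in $\lexab$ and let $\al=\min\set{\be<\la: x(\be)\neq x'(\be)}$.
\begin{itemize}
\item If $\al<\ka$, then $\restr x\ka$ and $\restr{x'}\ka$ first differ at $\al$, with $x(\al)<x'(\al)$, so $\restr x\ka<\restr{x'}\ka$.
\item If $\al\geq\ka$, the two restrictions coincide.
\end{itemize}
In both cases $\pi^\la_\ka(x)\leq\pi^\la_\ka(x')$.

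\emph{Fibres.} For $y\in\B^\ka$ the fibre is
\[
(\pi^\la_\ka)^{-1}(y)=\set{x\in\lexab:\restr x\ka=y}.
\]
I claim it equals $\ilcc[u,v]$, where $u=y\symblecnc 0$ and $v=y\symblecnc 1$ (the extensions by $0$, respectively $1$, on $\la\setminus\ka$).
\begin{itemize}
\item Every $x$ in the fibre satisfies $u\leq x\leq v$: the first disagreement with $u$ occurs at some coordinate $\be\geq\ka$, where $u(\be)=0$, and symmetrically for $v$.
\item Conversely, suppose $u\leq x\leq v$ but $\restr x\ka\neq y$, and let $\al<\ka$ be the least coordinate where they differ. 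If $x(\al)<y(\al)$ then $x<u$; if $x(\al)>y(\al)$ then $x>v$. Either way we contradict $u\leq x\leq v$.
\end{itemize}
Since a closed interval $\ilcc[u,v]$ is closed in the order topology, the fibre is a closed interval.

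The edge cases are harmless. If $\ka=\la$, the map is the identity. If $\ka=0$, then $\B^0$ is a single point and the fibre is all of $\lexab=\ilcc[0_\la,1_\la]$. Proposition~\ref{p:blex:2-1} now gives continuity.

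The only slightly delicate step is identifying the fibres as exact intervals with endpoints $y\symblecnc 0$ and $y\symblecnc 1$. Once that is in place, the rest is bookkeeping.
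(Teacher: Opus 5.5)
Your proof is correct and follows the paper's route: the paper simply says Proposition~\ref{p:blex:2} follows from Proposition~\ref{p:blex:2-1}, and you supply the verification of its hypotheses (surjectivity, monotonicity, and each fibre equal to $\ilcc[u,v]$ with $u,v$ the extensions of $y$ by $0$ and by $1$). Proposition~\ref{p:blex:2-1} only needs each fibre to have the form $\ilcc[u,v]$, which you established directly, so your closing remark that such an interval is topologically closed is superfluous but harmless.
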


\begin{theorem}\label{t:blex:t1}
Let $X$ be a zero-dimensional compact LOTS, $\la$ a regular cardinal.
The following conditions are equivalent:
\begin{enumerate}
\item
$X$ and $\lexab$ are order isomorphic;
\item
$X$ and $\lexab$ are homeomorphic;
\item
$\chi(x,X)=\la$ for all $x\in X$ and $X$ embeds in a product $\prod_{\al<\la}X_\al$ of spaces such that $\chi(X_\al)<\la$ for $\al<\la$;
\item
$\chi(x,X)=\la$ for all $x\in X$ and the set $M$ of jump points of the LOTS $X$ decomposes into a union $M=\bigcup_{\al<\la} M_\al$ of subsets such that $\chi(\cl{M_\al})<\la$ for $\al<\la$.
\end{enumerate}
\end{theorem}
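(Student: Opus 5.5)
The plan is to prove the cycle $(1)\rarr(2)\rarr(3)\rarr(4)\rarr(1)$. The step $(1)\rarr(2)$ is immediate, since an order isomorphism is a homeomorphism of LOTS.

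\emph{Step $(2)\rarr(3)$.} The conditions in (3) are topological, so it suffices to check them for $\lexab$ itself.
\begin{enumerate}
\item By Proposition \ref{p:blex:1}(2), $\chi(x,\lexab)=\la$ for every $x$.
\item For the embedding, take the diagonal of the projections $\pi^\la_\al:\lexab\to\B^\al$, $\al<\la$. These are continuous by Proposition \ref{p:blex:2}.
\item The diagonal map is injective: if $x\neq y$, they differ at some $\be<\la$, and then $\pi^\la_{\be+1}$ separates them.
\item Being a continuous injection of a compact space into a Hausdorff product, it is an embedding.
\item It remains to see that $\chi(\B^\al)<\la$ for $\al<\la$. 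In the LOTS $\B^\al$ the cofinalities $\cf_l$ and $\cf_r$ are cofinalities of subsets of $\al$, hence at most $|\al|+\om<\la$ (for $\la=\om$ the space $\B^\al$ is finite). Then apply Proposition \ref{p:1+1.5}.
\end{enumerate}

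\emph{Step $(3)\rarr(4)$.} Let $e=(e_\al)_{\al<\la}$ be the embedding; I assume the factors are regular, which is harmless since they may be replaced by the closures of the images.
\begin{enumerate}
\item For a jump $(a,b)$ we have $e(a)\neq e(b)$. Put $a,b$ into $M_\al$, where $\al$ is the least coordinate with $e_\al(a)\neq e_\al(b)$.
\item To show $\chi(\cl{M_\al})<\la$, I use Proposition \ref{p:1+1.5}, or rather its GO version Proposition \ref{p:1+0}. Suppose some $x\in\cl{M_\al}$ has one-sided cofinality $\la$ in $\cl{M_\al}$. Then there is, say, a strictly increasing sequence of jumps $(a_\xi,b_\xi)$, $\xi<\la$, from $M_\al$ converging to $x$.
\item Let $\{U_i:i<\mu\}$, $\mu<\la$, be a base at $e_\al(x)$ in $X_\al$. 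Since the factors are regular, I may assume the $U_i$ are closed neighbourhoods with $\bigcap_i U_i=\sset{e_\al(x)}$.
\item For each $\xi$, one of $e_\al(a_\xi),e_\al(b_\xi)$ differs from $e_\al(x)$, so it lies outside some $U_{i(\xi)}$.
\item By regularity of $\la$, one index $i$ serves for $\la$ many $\xi$. This contradicts $e_\al(a_\xi),e_\al(b_\xi)\to e_\al(x)$.
\end{enumerate}

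\emph{Step $(4)\rarr(1)$.} This is the main step. I will build, by recursion on $\al\le\la$, a system of closed intervals $I_s$, $s\in\B^\al$, with the following properties:
\begin{enumerate}
\item the intervals on each level partition $X$ and are ordered lexicographically;
\item each $I_s$ is clopen for $\al<\la$;
\item each $I_s$ is split as $I_{s\fun0}<I_{s\fun1}$ at a jump of $X$.
\end{enumerate}
At successor stages I split every nondegenerate $I_s$ at a jump chosen by bookkeeping from $\bigcup_{\be\le\al}M_\be$. The bookkeeping must ensure every jump is eventually used as a splitting point, so that at level $\la$ all intervals are singletons. This uses Proposition \ref{p:1+5} and the fact that jumps are exactly what separate points.

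At a limit $\al<\la$, set $I_s=\bigcap_{\be<\al}I_{s|\be}$. This is a nonempty closed interval; it is not a singleton, because a point would then have character $\le|\al|<\la$. I expect the main obstacle to be showing that this intersection is again clopen, i.e.\ that its endpoints are jump points (or the extremes of $X$). The endpoint $c$ is a limit of fewer than $\la$ chosen jump points, all lying in $\cl{M_\be}$ for $\be<\al$. The character bound on $\cl{M_\be}$ should then force $c$ to have small cofinality from that side. Since $\chi(c,X)=\la$, the other side has cofinality $\la$. The remaining work is to use this, with a careful choice of splitting jumps (first I would try always splitting at jumps from the earliest available $M_\be$), to show that $c$ is itself a jump point of the correct type.

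Finally, map $x$ to the unique branch $s\in\B^\la$ with $x\in I_{s|\al}$ for all $\al$. This map is increasing and injective. It is surjective because every branch gives a nonempty intersection, by compactness. An increasing bijection of linear orders is an order isomorphism, which gives (1).
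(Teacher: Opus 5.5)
Your $(1)\rarr(2)\rarr(3)$ agrees with the paper. Your $(3)\rarr(4)$ is correct and somewhat leaner than the paper's. You index a jump by the single coordinate where $e(a)$ and $e(b)$ first differ, then pigeonhole over a base at $e_\al(x)$. The paper instead uses projections $p_\be$ onto initial subproducts $P_\be=\prod_{\al<\be}X_\al$, and shows that the $G_\ka$-fibre $p_\be^{-1}(p_\be(x))$, $\ka<\la$, would contain a whole jump from $M_\be$. The gap is in $(4)\rarr(1)$.

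First, the property you expect to be the main obstacle is false for $\la\ge\om_1$, whatever jumps are chosen. Along the branch $1_\om$, each $I_{1_{n+1}}$ begins at the right point of the jump that splits $I_{1_n}$. So the left endpoints strictly increase, and $c=\min I_{1_\om}$ has $\cf_l(c,X)=\om$. Nor is $c$ a left jump point, since then $\chi(c,X)=\om<\la$. Hence $c$ is not a jump point and $I_{1_\om}$ is not open. Equivalently, a successful construction carries $I_s$ onto the cylinder $\{y\in\lexab: y|\al=s\}$, and the cylinder of $1_\om$ has left endpoint $1_\om\fun 0_\la$, whose left cofinality is $\om$ by Proposition \ref{p:blex:1}.

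Clopenness is not needed anyway. Splitting a closed interval $[u,v]$ with $u<v$ at a jump only requires Proposition \ref{p:1+5}. Non-degeneracy of every $I_s$, $s\in\B^\al$, $\al<\la$, follows from $\chi\equiv\la$; this includes successor levels, where a half of a split could a priori be a singleton. An endpoint of $I_s$ has cofinality $<\la$ on its outer side, hence $\la$ on its inner side. The paper phrases this via continuity of the monotone map $f_\al\colon X\to\B^\al$ (Proposition \ref{p:blex:2-1}), whose fibres are the $I_s$: a singleton fibre would be a $G_{<\la}$-point.

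Second, and this is the real gap, the heart of $(4)\rarr(1)$ is left to unspecified bookkeeping. That heart is showing all intervals at level $\la$ are singletons, i.e.\ injectivity. Meanwhile your only use of the hypothesis $\chi(\cl{M_\be})<\la$ goes toward the false clopenness claim. If the jumps could be enumerated in type $\la$, splitting each interval at its least-indexed jump would work. But $|\jumps X|$ may exceed $\la$: for $X=\lex{\om_1}\B$ it is $2^\om$, which exceeds $\om_1$ unless $\mathrm{CH}$ holds. Also, restricting stage $\al$ to jumps meeting $\bigcup_{\be\le\al}M_\be$ may leave an interval with nothing to split.

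The decomposition $M=\bigcup_{\be<\la}M_\be$ acts as a coarse enumeration of type $\la$, and the missing argument runs as follows.
\begin{enumerate}
\item Let $\Om(a,b)$ be the least $\be$ with $\{a,b\}\cap M_\be\neq\es$. Split each interval at a jump inside it with minimal $\Om$; this is your ``earliest available $M_\be$''.
\item Suppose along a branch $s$ the interval $[u^s,v^s]=\bigcap_{\al<\la}I_{s|\al}$ were non-degenerate. It would contain a jump of some index $\te^s$.
\item The minimal indices $\te_\al$ of jumps in $I_{s|\al}$ are non-decreasing and $\le\te^s<\la$. By regularity they equal some $\te$ from some $\al_0$ on.
\item The intervals shrink properly at every step, so one endpoint is never reached; say $u_\al<u^s$ for all $\al$. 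Then the left endpoints move cofinally often.
\item Each such move goes to the right point $b_\al$ of a splitting jump $(a_\al,b_\al)$ with $\Om(a_\al,b_\al)=\te$, so $a_\al$ or $b_\al$ lies in $M_\te$.
\item This gives a $\la$-sequence in $M_\te$ increasing to $u^s$. Hence $\chi(u^s,\cl{M_\te})=\la$, contradicting (4).
\end{enumerate}
Without this stabilization argument nothing guarantees that your splitting rule exhausts the jumps, so injectivity of your final map remains unproved.
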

\begin{proof}
(1) $\rarr$ (2). Obvious.

(2) $\rarr$ (3). By Proposition \ref{p:blex:2}, $\lexab$ embeds via the map $\diag_{\al<\la} \pi_\al^\la$ into the product $\prod_{\al<\la} \lex\al\B$. It remains to note that $\chi(\lex\al\B)\leq |\al|<\la$ for $\al<\la$.

(3) $\rarr$ (4). Assume $X\subset \prod_{\al<\la}X_\al$. We may assume that each $X_\al$ is compact. Denote $P_\be = \prod_{\al<\be}X_\al$ for $\be\leq\la$ and let $p_\be$ be the projection from $P_\la$ onto $P_\be$ for $\be<\la$.

Let $\be<\la$. Since $\la$ is a regular cardinal and $\chi(X_\al)<\la$ for $\la<\be$, we have $\ka'=\sup\{ \chi(X_\al) : \al < \be \}<\la$. Set $\ka=\ka' + |\be|$. Then $\chi(P_\be)\leq\ka<\la$.

For $(a,b)\in\jumps X$ set
\[
\te_{a,b}=\min \set{\al<\la: p_\al(\ilpc(-\infty,a]) \cap  p_\al(\ilcp[b,+\infty))=\es}.
\]
Set $M_\be=\set{a,b : (a,b)\in\jumps X,\ \te_{a,b}=\be}$ for $\be<\la$. Then $M=\bigcup_{\be<\la} M_\be$. Let us show that $\chi(\cl{M_\be})<\la$. Suppose otherwise. Then $\chi(x,\cl{M_\be})=\la$ for some $x\in \cl{M_\be}$. Let $L=\ilpp(-\infty,x)\cap M_\be$ and $R=\ilpp(x,+\infty)\cap M_\be$. Then $\chi(x,\cl{L})=\la$ or $\chi(x,\cl{R})=\la$. Assume, without loss of generality, that $\chi(x,\cl{R})=\la$. Let $\ka=\chi(p_\be(x), P_\be)$ and $F=p_\be^{-1}(p_\be(x))$. Since $\chi(x,\cl{R})=\la$, $F$ is a set of type $G_\ka$ and $\ka<\la$, there is $y>x$ with $\ilpp(x,y)\subset F$, and the set $\ilpp(x,y)\cap M_\be$ is infinite. There is $(a,b)\in\jumps X$ with $\sset{a,b}\subset \ilpp(x,y)\cap M_\be$. Then $\sset{a,b}\subset F$, and hence $p_\be(a)=p_\be(b)=x$, contradicting $p_\be(\ilpc(-\infty,a]) \cap  p_\be(\ilcp[b,+\infty))=\es$.

(4) $\rarr$ (1). Set $\cM_\al=\set{(a,b)\in \jumps X: \sset{a,b}\cap M_\al\neq\es}$ for $\al<\la$ and define the map
\[
\Om: \jumps X \to \la,\ (a,b) \mapsto \min\set{\al<\la: (a,b)\in \cM_\al}.
\]
Set $\te_{u,v}=\min  \Om(\jumps{\ilcc[u,v]})$ and $\cJ_{u,v}=\Om^{-1}(\te_{u,v})\cap \jumps{\ilcc[u,v]}$ for $u,v\in X$, $u<v$. Fix $(a_{u,v},b_{u,v})\in \cJ_{u,v}$ for every $u,v\in X$, $u<v$.

By induction on $\al<\la$, we construct increasing continuous maps $f_\al: X\to \B^\al$ such that the following conditions hold:
\begin{enumerate}
\item
$f_\al=\pi_\al^\be\circ f_\be$, where $\al<\be<\la$ and $\pi_\al^\be: \B^\be\to \B^\al$ is the projection;
\item
if $\al<\la$, 
$s\in \B^\al$ and $\ilcc[u,v]=f^{-1}_\al(s)$, then $f^{-1}_{\al+1}(s\fun 0)=\ilcc[u,a_{u,v}]$ and $f^{-1}_{\al+1}(s\fun 1)=\ilcc[b_{u,v},v]$.
\end{enumerate}
Base of the induction, $\al=0$. Then $\B^\al=\sset\es$; let $f_\al: X\to \sset\es$ be the identity map.

Inductive step, $0<\al<\la$. Consider the case when $\al$ is a limit ordinal. For $x\in X$ define $f_\al(x)\in \B^\al$: $f_\al(x)(\be)=f_{\be+1}(x)(\be)$ for $\be<\al$. Consider the case when $\al$ is a non-limit ordinal, $\al=\be+1$.
Define the map $g_\al: X\to \B$. Let $x\in X$. Since $\chi(x,X)=\la$ and $\chi(\B^\al)<\la$, we have $u<v$, where $\ilcc[u,v]=f^{-1}_\be(f_\be(x))$. Set
\[
g_\al(x) = \begin{cases}
0,&	x\in \ilcc[u,a_{u,v}],
\\
1,&	x\in \ilcc[b_{u,v},v].
\end{cases}
\]
Define $f_\al(x)\in \B^\al$:
\[
f_\al(x)(\ka) = \begin{cases}
f_\be(x)(\ka),&	\ka<\be,
\\
g_\al(x),&	\ka=\be.
\end{cases}
\]
The maps $f_\al$ for $\al<\la$ have been constructed.

Define the map $f: X\to \lexab$. For $x\in X$ set $f(x)(\al)=f_{\al+1}(x)(\al)$.
Since each $f_\al$ is a monotone map, $f$ is a monotone map.

For $s\in \lexab$ and $\al<\la$ denote 
\begin{align*}
\ilcc[u_\al^s,v_\al^s]&=f^{-1}_\al(\pi_\al^\la(s)), 
&
(a_\al^s,b_\al^s)&=(a_{u_\al^s,v_\al^s},b_{u_\al^s,v_\al^s}).
\end{align*}
Then
\begin{align*}
\ilcc[u_{\be+1}^s,v_{\be+1}^s] &=
\begin{cases}
\ilcc[u_{\be}^s,a_{\be}^s], &  s(\be)=0,
\\
\ilcc[b_{\be}^s,v_{\be}^s], &  s(\be)=1,
\end{cases}
\\
\ilcc[u_{\ga}^s,v_{\ga}^s] &= \bigcap_{\al<\ga}\ilcc[u_\al^s,v_\al^s],
\end{align*}
for $\be<\la$ and limit $\ga<\la$. Hence 
\[
f^{-1}(s) = \ilcc[u^s,v^s]=\bigcap_{\al<\la}\ilcc[u_\al^s,v_\al^s]\neq\es.
\]
Since $f^{-1}(s)\neq\es$ for all $s\in\lexab$, $f$ is surjective.
Since $f$ is a monotone surjection and $f^{-1}(s)$ is a closed interval for all $s\in\lexab$, Proposition \ref{p:blex:2-1} implies that $f$ is continuous.

Let us show that $f$ is a homeomorphism; for this it suffices to check that $f$ is injective. Suppose otherwise. Then $|f^{-1}(s)|>1$, and hence $u^s<v^s$ for some $s\in\lexab$.
Denote $\te^s=\te_{u^s,v^s}$ and $\te_\al=\te_{u_\al^s,v_\al^s}$ for $\al<\la$.
Then
\[
\ilcc[u^s,v^s] \subsetneq \ilcc[u^s_\be,v^s_\be] \subsetneq \ilcc[u^s_\al,v^s_\al]
\qquad\text{ and } \qquad \te_\al\leq \te_\be\leq  \te^s < \la
\]
for $\al< \be < \la$. 
Hence either $u^s_\al<u^s$ for all $\al<\la$, or $v^s<v^s_\al$ for all $\al<\la$. Assume, without loss of generality, that $u^s_\al<u^s$ for all $\al<\la$.
Since $\la$ is a regular cardinal and the sequence $\sq{\te_\al}{\al<\la}$ is bounded and increasing, the sequence $\sq{\te_\al}{\al<\la}$ stabilizes: for some $\te<\la$ and $\al_0<\la$, $\te_\al=\te$ for all $\al$ with $\al_0\leq \al < \la$. Set
\[
C=\set{ \al<\la: \al_0<\al,\ u^s_{\al_0} < u^s_{\al} < u^s_{\al+1} }
\]
Then $|C|=\la$, $u^s_{\al} < a^s_\al< b^s_\al=u^s_{\al+1}$, and $(a^s_\al,b^s_\al)\in \cM_\te$ for $\al\in C$.
Set $A=\set{a^s_\al:\al\in C}$, $B=\set{b^s_\al:\al\in C}$, and $S=M_\te\cap(A\cup B)$. Since $\sset{a^s_\al,b^s_\al}\cap M_\te\neq\es$ for $\al\in C$, we get $u^s\in \cl S$ and $\chi(u^s,\cl S)=\la$. Hence $u^s\in \cl{M_\te}$ and $\chi(u^s,\cl{M_\te})=\la$, contradicting $\chi(\cl{M_\te})<\la$.
\end{proof}

\begin{proposition}\label{p:blex:ct1}
Let $\la$ be a regular cardinal, $X$ a closed subset of $\lexab$.
The following conditions are equivalent:
\begin{enumerate}
\item
$X$ and $\lexab$ are order isomorphic;
\item
$\chi(x,X)=\la$ for all $x\in X$.
\end{enumerate}
\end{proposition}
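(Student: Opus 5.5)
The implication (1)$\rarr$(2) is immediate: by Proposition \ref{p:blex:1}(2), $\chi(x,\lexab)=\la$ for every $x$, and an order isomorphism is a homeomorphism. So the work is in (2)$\rarr$(1). Here I will apply Theorem \ref{t:blex:t1}, using the implication (3)$\rarr$(1).

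First, $X$ is a zero-dimensional compact LOTS. It is compact, being closed in the compact space $\lexab$ (Proposition \ref{p:h:1}). Its subspace topology coincides with its order topology by Proposition \ref{p:1+0a}(2)(c). It is zero-dimensional as a subspace of the zero-dimensional compact space $\lexab$; alternatively, one can verify Proposition \ref{p:1+5} directly. The hypothesis gives $\chi(x,X)=\la$ for all $x\in X$; here the character is computed in the order topology of $X$, which equals the subspace topology.

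Second, $X$ embeds in a product of spaces of small character. By Proposition \ref{p:blex:2} and the proof of (2)$\rarr$(3) in Theorem \ref{t:blex:t1}, the diagonal map $\diag_{\al<\la}\pi^\la_\al$ embeds $\lexab$ into $\prod_{\al<\la}\B^\al$, and $\chi(\B^\al)\le|\al|<\la$. Restricting this embedding to $X$ gives an embedding of $X$ into the same product. Hence condition (3) of Theorem \ref{t:blex:t1} holds for $X$, and $X$ is order isomorphic to $\lexab$ by that theorem.

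The only delicate point is making sure that Theorem \ref{t:blex:t1} is applied to $X$ as a LOTS with its own order topology. The map $\diag\pi_\al^\la$ must be checked to be an embedding of the subspace $X$, but this is automatic, since restricting an embedding to a subspace gives an embedding. If the infinite case $\la$ finite is a concern, note that $\la$ is a regular cardinal, and the theorem is stated for such $\la$, so the same case conventions apply. I would expect no real obstacle beyond confirming that $\chi(\B^\al)<\la$ for every $\al<\la$, including the finite $\al$, where $\B^\al$ is finite and discrete.
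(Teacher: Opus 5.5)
Your proof is correct and is essentially the paper's. For (2)$\rarr$(1) the paper likewise just notes that condition (3) of Theorem \ref{t:blex:t1}, which holds for $\lexab$ via the diagonal of the projections $\pi^\la_\al$, passes to the subspace $X$; your extra checks (compactness, zero-dimensionality, and agreement of the subspace and order topologies by Proposition \ref{p:1+0a}(2)(c)) only make explicit what the paper leaves tacit. On the point you flag: the paper sets $\chi=\om$ at isolated points, so $\chi(\B^\al)<\la$ for finite $\al$ holds only when $\la>\om$, and for $\la=\om$ the statement itself degenerates (e.g.\ a singleton satisfies (2)); this caveat affects the paper's argument in exactly the same way.
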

\begin{proof}
(1) $\rarr$ (2) follows from Theorem \ref{t:blex:t1}.
Let us prove (2) $\rarr$ (1). Since, by Theorem \ref{t:blex:t1}, condition (3) of Theorem \ref{t:blex:t1} holds for $\lexab$ and $X\subset \lexab$, condition (3) of Theorem \ref{t:blex:t1} holds for $X$.
Hence $X$ and $\lexab$ are order isomorphic.
\end{proof}

\begin{proposition}\label{p:blex:3-1}
Let $X$ be a compact zero-dimensional LOTS. Then $w(X)=|\jumps X|$.
\end{proposition}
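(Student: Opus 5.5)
We prove $w(X)=|\jumps X|$ for a compact zero-dimensional LOTS $X$ by establishing the two inequalities separately. Throughout we assume $X$ is infinite. If $X$ is finite, then $\jumps X$ has $|X|-1$ elements, so the formula holds up to the usual convention that weights are infinite, or exactly if one counts jumps together with endpoints. We also use the elementary fact that every infinite compact zero-dimensional LOTS has infinitely many jumps. Indeed, by Proposition \ref{p:1+5} any two points are separated by a jump. Given $n$ jumps, the points lie in $n+1$ clopen intervals, at least one of which is infinite, and inside it two points are again separated by a new jump.

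\emph{Upper bound $w(X)\le|\jumps X|+\om$.} Let $\cB$ consist of all intervals $\ilcc[a,b]$ such that $a$ is $\min X$ or a right jump point, and $b$ is $\max X$ or a left jump point. Each such interval is clopen. By Proposition \ref{p:1+6}, $\cB$ is a base of $X$. Every endpoint of a member of $\cB$ is either one of the two extremes of $X$ or a coordinate of some element of $\jumps X$. Hence there are at most $2|\jumps X|+2$ possible endpoints, and so $|\cB|\le(2|\jumps X|+2)^2$. When $\jumps X$ is infinite this is $|\jumps X|$.

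\emph{Lower bound $|\jumps X|\le w(X)$.} Fix a base $\cU$ with $|\cU|=w(X)$. By compactness we may pass to the family of finite unions of members of $\cU$ without changing the cardinality. For each jump $(a,b)$ the set $\ilpc(-\infty,a]$ is clopen, hence compact. So there is a finite union $U$ of members of $\cU$ with $\ilpc(-\infty,a]\subset U\subset \ilpc(-\infty,a]$; that is, $U=\ilpc(-\infty,a]$ exactly. Different jumps give different clopen initial segments. Therefore the assignment $(a,b)\mapsto \ilpc(-\infty,a]$ is an injection from $\jumps X$ into the family of finite unions of $\cU$, which gives $|\jumps X|\le w(X)$.

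The only delicate point is the treatment of finite cases and the convention that $w$ is infinite. The rest consists of combining Proposition \ref{p:1+6} with the compactness argument that identifies each clopen initial segment as an exact finite union of basic sets.
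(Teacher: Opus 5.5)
Your proof is correct and follows essentially the same route as the paper, which observes that the clopen initial and final segments cut off by jumps form a subbase; your base from Proposition~\ref{p:1+6} is exactly the family of their finite intersections, so the upper bound is the same count. For the reverse inequality the paper just writes ``Hence'', and your compactness argument (each clopen segment $(-\infty,a]$ is a finite union of basic sets, and distinct jumps give distinct segments) is the standard justification it leaves implicit.
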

\begin{proof}
Since $X$ is a compact zero-dimensional LOTS, $\cU=\set{\ilpp(-\infty,a), \ilpp(b,+\infty): (a,b)\in \jumps X}$ forms a clopen subbase of the zero-dimensional compact space $X$. Hence $w(X)=|\jumps X|$.
\end{proof}

\begin{proposition}\label{p:blex:3}
Let $\la$ be a limit ordinal. Then $w(\lexab)=c(\lexab)=\sup\set{2^\al: \al<\la}$.
\end{proposition}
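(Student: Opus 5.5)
We prove the three quantities equal by two inequalities, $c(\lexab)\geq \sup\set{2^\al:\al<\la}$ and $w(\lexab)\leq \sup\set{2^\al:\al<\la}$. Since $c\leq w$ holds for every space, this closes the chain. Write $\mu=\sup\set{2^\al:\al<\la}$. Note that $\lexab$ is a compact zero-dimensional LOTS: it is compact by Proposition \ref{p:h:1}, and zero-dimensional by Proposition \ref{p:1+5}, since between any two distinct points there is a jump of the form $(t\fun 0\fun 1_\la,\ t\fun 1\fun 0_\la)$.

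\emph{Upper bound for weight.} By Proposition \ref{p:blex:3-1}, $w(\lexab)=|\jumps{\lexab}|$. By Proposition \ref{p:blex:1}(4), every jump has the form
\[
\bigl(t\fun 0\fun 1_\la,\ t\fun 1\fun 0_\la\bigr),\qquad t\in\B^\be,\ \be<\la.
\]
The proof of Proposition \ref{p:blex:1}(4) only uses that $x^{-1}(0)$ has a maximum, so it applies to any limit ordinal $\la$, not only to regular cardinals. Hence the jumps are indexed by $\bigcup_{\be<\la}\B^\be$, a set of cardinality at most $|\la|\cdot\mu=\mu$. Here we use $|\la|\leq\mu$, because $2^\al\geq|\al|$ and $\la$ is a limit ordinal, so $|\la|\leq \sup_{\al<\la}|\al|$ when $\la$ is infinite. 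The case of finite $\la$ does not arise, since $\la$ is a limit ordinal (and $\la=0$ is trivial).

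\emph{Lower bound for cellularity.} Fix $\al<\la$. For each $t\in\B^\al$, consider the set $\blex t\al\la\B=\set{g: \restr g\al=t}$, more precisely its interior. This set contains the nonempty open interval
\[
\ilpp\bigl(t\fun 0\fun 0\fun 1_\la,\ t\fun 1\fun 1\fun 0_\la\bigr).
\]
This interval makes sense because $\al+2<\la$, as $\la$ is a limit ordinal. It is nonempty because it contains, for example, $t\fun 0\fun 1\fun 0_\la$. These intervals are pairwise disjoint for distinct $t$, since their elements have different restrictions to $\al$. This gives $2^{|\al|}$ disjoint nonempty open sets. Note that $|\B^\al|=2^{|\al|}$, which is exactly the meaning of $2^\al$ here. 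Hence $c(\lexab)\geq 2^\al$ for every $\al<\la$, so $c(\lexab)\geq\mu$.

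One subtlety concerns the convention for $c$ as a supremum versus a maximum. That is only an issue for attainment, and the statement is about the sup, so it is fine.

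The main obstacle is purely bookkeeping. First, confirm that the jump description from Proposition \ref{p:blex:1}(4) holds for non-regular limit $\la$; this needs re-checking the direct argument, which uses only the existence of $\max x^{-1}(0)$. Second, handle the cardinal arithmetic $|\la|\le\mu$. Neither should be hard.
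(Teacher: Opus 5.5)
Your proof is correct in outline and takes a different route from the paper for the upper bound on $w(\lexab)$. Your lower bound on $c(\lexab)$ is the paper's own argument: the paper says only that each fibre of $\pi^\la_\al$ has nonempty interior, and your interval is an explicit witness.

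\textbf{Upper bound: the two routes.} The paper embeds $\lexab$ into $\prod_{\al<\la}\B^\al$ via the projections $\pi^\la_\al$ (Proposition \ref{p:blex:2}) and bounds the weight of that product. You instead use compactness and zero-dimensionality to get $w(\lexab)=|\jumps{\lexab}|$ (Proposition \ref{p:blex:3-1}), and then identify the jumps with $\bigcup_{\be<\la}\B^\be$.

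Your identification of the jumps holds for every ordinal $\la$, and the check is direct. Suppose $a<b$ first differ at $\be$ and nothing lies between them. Flipping a later $0$ of $a$, or a later $1$ of $b$, would produce a point strictly between them. Hence $a=t\fun 0\fun 1_\la$ and $b=t\fun 1\fun 0_\la$ with $t=a|_\be$.

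Your route gives the exact value $w(\lexab)=|\bigcup_{\be<\la}\B^\be|$. The cost is verifying compactness, zero-dimensionality and the description of the jumps. The paper's route is shorter and needs only continuity of the projections.

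\textbf{One justification is wrong as written.} The claim $|\la|\le\sup_{\al<\la}|\al|$ fails for successor cardinals. For $\la=\om_1$ the right-hand side is $\om$, and $\la=\om_1$ is exactly the case the paper uses later.

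The inequality you actually need, $|\la|\le\mu$, is still true, but it requires Cantor's strict inequality $2^{|\al|}>|\al|$. Split into two cases:
\begin{itemize}
\item If $\la=\ka^+$, then $|\la|\le 2^\ka\le\mu$, because $\ka<\la$.
\item If $\la$ is not a successor cardinal, then $|\la|=\sup_{\al<\la}|\al|\le\mu$.
\end{itemize}
The paper's product argument silently uses the same bound, since its product has $|\la|$ factors. So this is a step to repair, not a weakness peculiar to your route.
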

\begin{proof}
By Proposition \ref{p:blex:2}, $\lexab$ embeds via the map $\diag_{\al<\la} \pi_\al^\la$ into the product $\prod_{\al<\la} \lex\al\B$. Hence $w(\lexab)\leq \tau= \sup\set{2^\al: \al<\la}$.
For $\al<\la$, under the map $\pi^\la_\al$ the preimage of each point has nonempty interior. Hence $c(\lexab)\geq \tau$. It remains to note that $c(\lexab)\leq w(\lexab)$.
\end{proof}

\begin{theorem}\label{t:blex:1}
Let $\la$ be a regular cardinal, $X$ a compact zero-dimensional LOTS with $w(X)=\la$.
The following conditions are equivalent:
\begin{enumerate}
\item
$X$ and $\lexab$ are order isomorphic;
\item
$\chi(x,X)=\la$ for all $x\in X$.
\end{enumerate}
If either of these conditions holds, then $\la=\sup\set{2^\al: \al<\la}$.
\end{theorem}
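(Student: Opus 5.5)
The plan is to reduce everything to Theorem \ref{t:blex:t1} and to Propositions \ref{p:blex:3-1} and \ref{p:blex:3}.

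For (1) $\rarr$ (2), I would argue directly. Order isomorphisms are homeomorphisms, so $\chi(x,X)=\chi(f(x),\lexab)$ for an order isomorphism $f$. By Proposition \ref{p:blex:1}(2), $\chi(y,\lexab)=\la$ for every $y\in\lexab$, so $\chi(x,X)=\la$ for all $x\in X$.

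The substantive direction is (2) $\rarr$ (1). I will verify condition (4) of Theorem \ref{t:blex:t1}. Since $X$ is compact, zero-dimensional and $w(X)=\la$, Proposition \ref{p:blex:3-1} gives $|\jumps X|=\la$. Hence the set $M$ of jump points has $|M|\le\la$, and I enumerate it as $M=\set{m_\al:\al<\la}$, repeating points if $M$ is smaller. Put $M_\al=\sset{m_\al}$. Then $\cl{M_\al}=M_\al$ is a single point, so its character (the paper's convention for isolated points) is $\om$. If $\la>\om$ this gives $\chi(\cl{M_\al})=\om<\la$, condition (4) holds, and Theorem \ref{t:blex:t1} yields (1).

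The main obstacle is the case $\la=\om$, where $\chi(\cl{M_\al})<\la$ cannot hold literally. I would handle it separately. Here $X$ is a compact zero-dimensional LOTS of countable weight, so it is metrizable. The hypothesis $\chi(x,X)=\om$ for all $x$ says nothing beyond that, and $X$ might even be finite or countable. Comparing with $\B^\om$ would then need $X$ to be a Cantor set. So I would either read the statement with $\la$ uncountable, or note that the intended case is $\la>\om$. If $\la=\om$ is really intended, I would use Brouwer's characterization, but then I would need to verify that $X$ has no isolated points, and (2) does not give this. I would flag this point.

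For the final assertion, assume (1). By Proposition \ref{p:blex:3}, $w(X)=w(\lexab)=\sup\set{2^\al:\al<\la}$, since $\la$ is a limit ordinal. Combined with $w(X)=\la$, this gives $\la=\sup\set{2^\al:\al<\la}$.
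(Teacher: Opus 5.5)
Your proof is correct for $\la>\om$, but it goes through a different condition of Theorem~\ref{t:blex:t1} than the paper does.

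\textbf{The paper's route.} The paper verifies condition (3). A space of weight $\la$ embeds in $[0,1]^\la$, and $\chi([0,1])=\om<\la$. Theorem~\ref{t:blex:t1} then passes internally through its (3)~$\rarr$~(4) step. That step is a nontrivial argument: it builds the sets $M_\be$ from the projections $p_\be$, using that their fibres are sets of type $G_\ka$ with $\ka<\la$. The payoff is a one-line reduction that needs only the standard embedding theorem and nothing about jumps.

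\textbf{Your route.} You verify condition (4) directly. Proposition~\ref{p:blex:3-1} gives only $\la$ jumps, so the jump points split into $\la$ singletons, each with closure of character $\om<\la$. This bypasses the (3)~$\rarr$~(4) machinery entirely. It also makes clear that the weight hypothesis is used only to count jumps.

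Your direct proof of (1)~$\rarr$~(2) via Proposition~\ref{p:blex:1}(2), and your derivation of $\la=\sup\set{2^\al: \al<\la}$ from Proposition~\ref{p:blex:3}, agree with the paper.

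\textbf{The case $\la=\om$.} Your caveat is justified, and it affects the paper's proof equally, since the paper also needs $\chi([0,1])<\la$. Under the paper's convention that isolated points have character $\om$, the space $\om+1$ is a compact zero-dimensional LOTS of weight $\om$ satisfying (2). It is countable, so it is not order isomorphic to $\B^\om$. The statement should therefore be read with $\la>\om$.
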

\begin{proof}
By Proposition \ref{p:blex:3}, (1) implies $\la=\sup\set{2^\al: \al<\la}$.
(1) $\rarr$ (2) follows from Theorem \ref{t:blex:t1}.
Let us prove (2) $\rarr$ (1). Since $w(X)=\la$, $X$ embeds in $[0,1]^\la$, so condition (3) of Theorem \ref{t:blex:t1} holds for $X$. Hence $X$ and $\lexab$ are order isomorphic.
\end{proof}

The following statement follows from Theorem \ref{t:blex:1}.

\begin{cor}\label{c:blex:1}
{\rm ($\mathrm{CH}$)}
Let $X$ be a compact zero-dimensional LOTS.
The following conditions are equivalent:
\begin{enumerate}
\item
the LOTS $X$ is order isomorphic to $\lex {\om_1} \B$;
\item
$w(X)=\om_1$ and $\chi(x,X)=\om_1$ for all $x\in X$.
\end{enumerate}
\end{cor}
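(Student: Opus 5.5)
The plan is to derive Corollary~\ref{c:blex:1} directly from Theorem~\ref{t:blex:1} with $\la=\om_1$, using $\mathrm{CH}$ only to check the weight condition. First note that $\om_1$ is a regular cardinal. Moreover $\sup\set{2^\al:\al<\om_1}=2^\om$, because every $\al<\om_1$ is countable and infinite $\al$ give $2^{|\al|}=2^\om$. Under $\mathrm{CH}$ this supremum equals $\om_1$, so the side condition $\la=\sup\set{2^\al:\al<\la}$ from Theorem~\ref{t:blex:1} holds.

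For (1) $\rarr$ (2), assume $X$ is order isomorphic to $\lex{\om_1}\B$. Then $\chi(x,X)=\om_1$ for every $x$, by Proposition~\ref{p:blex:1}(2) (or by Theorem~\ref{t:blex:t1}). By Proposition~\ref{p:blex:3}, $w(X)=w(\lex{\om_1}\B)=\sup\set{2^\al:\al<\om_1}=2^\om$, and $\mathrm{CH}$ turns this into $w(X)=\om_1$.

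For (2) $\rarr$ (1), $X$ is a compact zero-dimensional LOTS with $w(X)=\om_1=\la$ and $\chi(x,X)=\la$ for all $x$. This is exactly the hypothesis of Theorem~\ref{t:blex:1}, whose implication (2) $\rarr$ (1) gives the order isomorphism with $\lex{\om_1}\B$.

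There is no real obstacle. The only point needing care is that $\mathrm{CH}$ is used only to identify $w(\lex{\om_1}\B)=2^\om$ with $\om_1$, which is needed for the direction (1) $\rarr$ (2) and to make the weight hypothesis of Theorem~\ref{t:blex:1} consistent.
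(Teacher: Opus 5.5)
Your proposal is correct and follows the paper's route: the paper simply derives the corollary from Theorem~\ref{t:blex:1} with $\la=\om_1$. You also make explicit the step the paper leaves implicit. For (1)~$\rarr$~(2), Proposition~\ref{p:blex:3} gives $w(\lex{\om_1}\B)=\sup\set{2^\al:\al<\om_1}=2^\om$, which equals $\om_1$ by $\mathrm{CH}$.
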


\begin{proposition}\label{p:blex:5}
$\lex {\om_1} \B$ does not embed homeomorphically in a homogeneous GO space.
\end{proposition}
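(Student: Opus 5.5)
Suppose, towards a contradiction, that $K$ is a homeomorphic copy of $\lex{\om_1}\B$ inside a homogeneous GO space $X$; identify $K$ with $\lex{\om_1}\B$. The plan is to play the tightness spectrum of $K$ against Theorem~\ref{t:main:1}. Since $K$ is compact and $\chi(K)=\om_1>\om$ by Proposition~\ref{p:blex:1}(2), Theorem~\ref{t:main:1}(2) gives $\chi(X)=\om_1$ and $\pi\chi(X)=\om$. Its proof also shows $\sp_t(X)=\sset{\om,\om_1}$, and, via Proposition~\ref{p:1}(1), $\sp_t(x,X)=\sset{\om,\om_1}$ for every $x\in X$.

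By Proposition~\ref{p:1+0}(4), at every point $x\in X$ one one-sided cofinality is $\om$ and the other is $\om_1$, and $x$ is a two-sided limit in $X$. In particular, no point of $X$ is a jump point or an endpoint of $X$ in its order.

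Now pick a point of $K$ that has a jump on one side inside $K$. A natural choice is $x=0_{\om_1}$, the minimum of $K$, or a left jump point of $K$. At such a point $\sp_t(x,K)=\sset{\om_1}$ (Proposition~\ref{p:blex:1}(1), $\la$ being the only infinite cofinality), so $x$ is a \wob/-point of $K$ with character $\om_1$. Fix an embedding and consider the order of $X$ restricted to $K$. This order need not agree with the order of $\lex{\om_1}\B$, since a homeomorphism need not be monotone. The key point, however, is that by Proposition~\ref{p:1+0} applied to $K\subset X$ (with the order inherited from $X$), the character and $\pi$-character of $x$ in $K$ are determined by the one-sided cofinalities of $K$ at $x$ in the $X$-order. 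We know $\chi(x,K)=\pi\chi(x,K)=\om_1$ for $x$ in the dense set of points where both $\ka_0,\ka_1\in\sset{1,\om_1}$ or one side is empty.

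The contradiction should come from counting $\sp_t$ at a point of $K$ carrying the value $\om$. By Proposition~\ref{p:1}(3),(4), $\om\in\sp_t(K)$, so some $y\in K$ has, in the $X$-order, one side of $K$ accumulating at $y$ with cofinality $\om$. Choose $y\in\homg lr,\om{\om_1}$, with $\pi\chi(y,K)=\om$ and $\chi(y,K)=\om_1$. Then the two sides of $K$ at $y$ in the $X$-order have cofinalities $\om$ and $\om_1$, matching the two sides of $X$. So far this is consistent, and the contradiction must instead be sought at a \wob/-point of $K$ with character $\om_1$, for instance a jump point $a$ of $\lex{\om_1}\B$.

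There, $K$ converges to $a$ from only one $X$-side, or from both with cofinality $\om_1$. In the one-sided case, the other $X$-side has cofinality $\om$ and consists of points of $X\setminus K$. The task is then to exploit homogeneity to transport this local picture, and the main obstacle is producing a genuine contradiction at this step.

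The first attempt will be the following. Since $\chi(X)=\om_1$ is achieved on one side at every point, the homogeneity map $h$ sending $y$ to $a$ carries a neighbourhood of $y$ in $K$ homeomorphically near $a$. Combined with Theorem~\ref{t:blex:t1}(3), applied to closed subsets of $K$ near $a$ that consist of points of $\pi$-character $\om$, this should show that $X$ near $a$ must contain a closed copy of $\homg lr,\om{\om_1}$-type behaviour on both sides. That would force $\sp_t(a,X)$ to contain $\om$ from the $K$-side, contradicting the one-sided $\om_1$ structure of $K$ at $a$.

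I expect this final step, reconciling the arbitrary (non-monotone) embedding with the one-sided cofinalities, to be where the real work lies.
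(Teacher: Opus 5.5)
\textbf{There is a genuine gap: the proposal never reaches a contradiction, and it looks for one at the wrong points.}

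Your reduction is correct and useful. Theorem~\ref{t:main:1} together with Proposition~\ref{p:1}(1) gives $\sp_t(x,X)=\sset{\om,\om_1}$ for every $x\in X$, so no point of $X$ has $\cf_l(x,X)=\cf_r(x,X)=\om_1$.

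The trouble is where you then look. At an endpoint or a jump point $a$ of $\lex{\om_1}\B$, the configuration you describe is perfectly consistent with $\sp_t(a,X)=\sset{\om,\om_1}$: $K$ accumulates at $a$ from one $X$-side with cofinality $\om_1$, and points of $X\setminus K$ accumulate from the other side with cofinality $\om$. No contradiction can be extracted there.

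The transport step does not repair this, for two reasons.
\begin{enumerate}
\item A self-homeomorphism $h$ of $X$ with $h(y)=a$ need not map $K$ into $K$, so it says nothing about how $K$ sits near $a$.
\item On the side from which $K$ accumulates at $a$, the $X$-cofinality equals the $K$-cofinality $\om_1$ by Proposition~\ref{p:1+0}(1),(2). So $\om$ can enter $\sp_t(a,X)$ only from the other side, where it already does.
\end{enumerate}
You acknowledge that this last step is where the real work lies; as proposed, it cannot be carried out.

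\textbf{The missing idea is that the worry about a non-monotone embedding is unfounded.} Give $K$ the order inherited from a LOTS $Y\supset X$.
\begin{enumerate}
\item Since $K$ is compact, its subspace topology coincides with this order topology (Proposition~\ref{p:1+0a}(2)(c)).
\item So $K$ is a compact zero-dimensional LOTS homeomorphic to $\lex{\om_1}\B$. Theorem~\ref{t:blex:t1}, (2)$\Rightarrow$(1), then makes it \emph{order isomorphic} to $\lex{\om_1}\B$.
\item Now use the points you set aside: $x\in\homm{\om_1}$, for instance $x$ with $x^{-1}(0)$ and $x^{-1}(1)$ both unbounded in $\om_1$. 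There $K$ accumulates from both sides with cofinality $\om_1$.
\item Since $x$ is a limit of $K\subset X$ from each side, Proposition~\ref{p:1+0}(1),(2) give $\cf_l(x,X)=\cf_r(x,X)=\om_1$.
\item Hence $\sp_t(x,X)=\sset{\om_1}$, contradicting your reduction.
\end{enumerate}
The paper ends the same way, phrased differently: $x$ is a \wob/-point of $X$ of character $\om_1$, hence a $P$-point. By homogeneity $X$ is then a $P$-space, which is impossible because $K$ is infinite and compact.

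If you want to avoid Theorem~\ref{t:blex:t1}, you need some intrinsic topological property that separates points of $\homm{\om_1}$ from one-sided points. One option: a small clopen neighbourhood of the point, with the point removed, splits into two relatively clopen pieces both accumulating at it. This fails at a one-sided limit of cofinality $\om_1$ in a compact LOTS. Some argument of this kind is indispensable, and your proposal does not contain one.
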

\begin{proof}
Suppose $\lex {\om_1} \B$ embeds homeomorphically in a homogeneous GO space $X$, with $X\subset Y$ where $Y$ is a LOTS. Let $K\subset X$ be a compact subset homeomorphic to $\lex {\om_1} \B$.
By Theorem \ref{t:blex:t1}, $K$ is order isomorphic to $\lex {\om_1} \B$. Then $\cf_l(x,K)=\cf_r(x,K)=\om_1$ for some $x\in K$. Hence $\cf_l(x,X)=\cf_r(x,X)=\om_1$. By Proposition \ref{p:1+0}, $x$ is a WOB-point in $X$ and $\chi(x,X)=\om_1$. Hence $x$ is a $P$-point in $X$. By homogeneity of $X$, $X$ is a $P$-space, contradicting the fact that compact subsets of $P$-spaces are finite.
\end{proof}

\begin{theorem}\label{t:blex:2}
Let $\la$ be a regular cardinal, $a,b\in \lexab$, $a<b$.
The following conditions are equivalent:
\begin{enumerate}
\item
the LOTS $\ilcc[a,b]$ and $\lexab$ are order isomorphic;
\item
$\cf_r(a,\lexab)=\cf_l(b,\lexab)=\la$.
\end{enumerate}
\end{theorem}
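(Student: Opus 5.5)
The plan is to reduce the statement to Proposition \ref{p:blex:ct1}. The interval $\ilcc[a,b]$ is a closed subset of $\lexab$. Its subspace topology coincides with its order topology by Proposition \ref{p:1+0a}(2)(c), since it is compact. So by Proposition \ref{p:blex:ct1}, $\ilcc[a,b]$ is order isomorphic to $\lexab$ exactly when $\chi(x,\ilcc[a,b])=\la$ for all $x\in\ilcc[a,b]$. It therefore suffices to compute this character pointwise and to compare it with condition (2).

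For an interior point $x$, with $a<x<b$, the interval $\ilpp(a,b)$ is an open neighbourhood of $x$ in $\lexab$. Hence $\chi(x,\ilcc[a,b])=\chi(x,\lexab)$, and this equals $\la$ by Proposition \ref{p:blex:1}(2). Now look at the endpoint $a$. In $\ilcc[a,b]$ it is the minimum, so its left side is empty and its character is governed by the right side alone. Precisely, Proposition \ref{p:1+1.5} applied to the LOTS $\ilcc[a,b]$ gives
\[
\chi(a,\ilcc[a,b])=\om+\cf_r(a,\ilcc[a,b]),
\]
with the convention that $a$ is isolated if $\cf_r=1$. Moreover $\cf_r(a,\ilcc[a,b])=\cf_r(a,\lexab)$, because cofinality from the right only depends on points arbitrarily close to $a$, and these lie in $\ilpp(a,b)$. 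By Proposition \ref{p:blex:1}(1) this value is $\ka_0=\cf a^{-1}(0)$, which is a regular cardinal $\le\la$, or $0$ or $1$. Thus $\chi(a,\ilcc[a,b])=\la$ holds exactly when $\cf_r(a,\lexab)=\la$, provided $\la$ is uncountable. Symmetrically, $\chi(b,\ilcc[a,b])=\la$ exactly when $\cf_l(b,\lexab)=\la$.

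This gives both directions. For (2)$\rarr$(1): every point of $\ilcc[a,b]$ has character $\la$, so Proposition \ref{p:blex:ct1} applies. For (1)$\rarr$(2): an order isomorphism $\ilcc[a,b]\to\lexab$ sends $a$ to $0_\la$ and $b$ to $1_\la$. Right cofinality is an order invariant, so $\cf_r(a,\ilcc[a,b])=\cf_r(0_\la,\lexab)$. By Proposition \ref{p:blex:1}(1), $\cf_r(0_\la,\lexab)=\cf 0_\la^{-1}(0)=\cf\la=\la$. Likewise $\cf_l(b,\ilcc[a,b])=\la$. Transferring back to $\lexab$ as above yields (2). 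I prefer this order-invariant argument to comparing characters, because it also covers $\la=\om$.

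The step needing most care is this endpoint bookkeeping. There are two issues:
\begin{itemize}
\item the finite cases, where $\cf_r(a)$ equals $1$ and $a$ is isolated in the interval, or equals $0$, which is impossible since $a<b$;
\item the case $\la=\om$, where "character $=\om$" does not distinguish $\cf_r=1$ from $\cf_r=\om$.
\end{itemize}
For (2)$\rarr$(1) with $\la=\om$, I would check directly that Proposition \ref{p:blex:ct1} still applies. Otherwise I would handle that case by noting that $\ilcc[a,b]$ is then a compact metrizable zero-dimensional LOTS with no isolated points, since its endpoints are not jump points. By the characterization via Theorem \ref{t:blex:t1}(4) it is then order isomorphic to the Cantor set $\B^\om$.
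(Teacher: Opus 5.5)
Your argument is correct and is essentially the paper's: you reduce to Proposition \ref{p:blex:ct1} by computing $\chi(x,\ilcc[a,b])$ at interior points via Proposition \ref{p:blex:1}(2) and at the endpoints via Proposition \ref{p:1+1.5}. The one variation is in (1)$\rarr$(2), where you transport the endpoint cofinalities through the isomorphism ($a\mapsto 0_\la$, $b\mapsto 1_\la$) instead of comparing characters. This handles $\la=\om$ cleanly: there $\chi(a,\ilcc[a,b])=\om$ alone does not exclude $\cf_r(a,\lexab)=1$, and the paper covers this only by asserting that $a,b$ are non-isolated. In (2)$\rarr$(1) no extra care for $\la=\om$ is needed, since (2) gives $\chi(a,\ilcc[a,b])=\om+\la=\la$ directly.
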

\begin{proof}
The points $a$ and $b$ are not isolated in $\ilcc[a,b]$, so Proposition \ref{p:1+1.5} gives $\chi(a,\ilcc[a,b])=\cf_r(a,\lexab)$. For $x\in \ilpp(a,b)$, $\chi(x,\ilcc[a,b])=\chi(x,\lexab)=\la$.
Hence condition (2) is equivalent to $\chi(x,\ilcc[a,b])=\la$ for $x\in \ilcc[a,b]$.
Then (1) $\lrarr$ (2) follows from Proposition \ref{p:blex:ct1}.
\end{proof}

\begin{theorem}\label{t:blex:3}
Let $\la$ be a regular cardinal, $a,b,c,d\in \lexab$, $a<b$ and $c<d$.
The following conditions are equivalent:
\begin{enumerate}
\item
the LOTS $\ilcc[a,b]$ and $\ilcc[c,d]$ are order isomorphic and have no isolated points;
\item
$\cf_r(a,\lexab)=\cf_r(c,\lexab)\geq \om$ and
$\cf_l(b,\lexab)=\cf_l(d,\lexab)\geq \om$.
\end{enumerate}
\end{theorem}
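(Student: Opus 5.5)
We prove (1)$\rarr$(2) by reading off invariants that order isomorphisms preserve. We prove (2)$\rarr$(1) by cutting each interval into a \emph{canonical} ordered sum of copies of $\lexab$. The canonical form will depend only on $\mu=\cf_r(a,\lexab)$ and $\nu=\cf_l(b,\lexab)$.

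\textbf{(1)$\rarr$(2).} Since $a<b$, the set $\ilpc(a,b]$ is nonempty. Hence $\cf_r(a,\ilcc[a,b])=\cf_r(a,\lexab)$, and likewise $\cf_l(b,\ilcc[a,b])=\cf_l(b,\lexab)$. An order isomorphism $\ilcc[a,b]\to\ilcc[c,d]$ sends $a\mapsto c$ and $b\mapsto d$ and preserves one-sided cofinalities. Therefore $\cf_r(a,\lexab)=\cf_r(c,\lexab)$ and $\cf_l(b,\lexab)=\cf_l(d,\lexab)$. Because $a$ is not isolated in $\ilcc[a,b]$, we have $\cf_r(a,\lexab)\ne 1$; it is also nonzero, so it is at least $\om$. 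The same argument applies to $b$.

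\textbf{(2)$\rarr$(1): splitting in the middle.} Choose a jump $(m,m^+)$ with $a<m<m^+<b$. It exists because $\ilpp(a,b)$ is infinite and jump points are dense (Proposition \ref{p:blex:1+1}(2)). Then
\[
\ilcc[a,b]=\ilcc[a,m]+\ilcc[m^+,b].
\]
By Proposition \ref{p:blex:1}(1), $\cf_l(m,\lexab)=\cf_r(m^+,\lexab)=\la$. It therefore suffices to prove the following claim.

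\emph{Claim.} If $\cf_r(a,\lexab)=\mu\ge\om$ and $m$ is a left jump point, then $\ilcc[a,m]$ is order isomorphic to $\sset{a}+\sum_{\al\in\rev\mu}\lexab$, a point followed by copies of $\lexab$ indexed by $\mu$ in reverse order.

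The mirror statement for $\ilcc[m^+,b]$ follows by applying $R_\la$.

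\textbf{Proof of the claim.} Fix a strictly decreasing sequence $\sq{x_\al}{\al<\mu}$ coinitial in $\ilpc(a,m]$. By transfinite recursion, build a strictly decreasing sequence of left jump points $l_\al$ with right partners $r_\al$, with $l_0=m$ and $l_\al<x_\al$.

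At a successor step, pick a jump strictly between $a$ and $l_\be$.

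At a limit $\ga<\mu$, set $z_\ga=\inf_{\be<\ga}l_\be$; it exists by compactness and satisfies $z_\ga>a$ since $\ga<\mu$. Then $\cf_r(z_\ga,\lexab)=\cf\ga<\la$, so Proposition \ref{p:blex:1}(1) gives $\cf_l(z_\ga,\lexab)=\la$. Now pick the jump $(l_\ga,r_\ga)$ below $z_\ga$ and also below $x_\ga$.

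The pieces are:
\begin{itemize}
\item $\ilcc[r_{\be+1},l_\be]$ for each $\be$;
\item $\ilcc[r_\ga,z_\ga]$ for limit $\ga$;
\item $\ilpc(z_\ga,l_\be]$, which is already absorbed, since $z_\ga$ is the infimum of the $l_\be$.
\end{itemize}
Every right endpoint has $\cf_l=\la$ and every left endpoint has $\cf_r=\la$. By Theorem \ref{t:blex:2}, each piece is order isomorphic to $\lexab$.

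The pieces are pairwise disjoint, and they are linearly ordered as $\rev$ of an ordinal of order type $\mu$ (relabel limit pieces into the index set). They cover $\ilpc(a,m]$: for $x>a$, take the least $\al$ with $l_\al<x$, which exists by coinitiality. If $\al$ is a successor, $x$ lies in the preceding successor piece. If $\al$ is a limit, then $x\le z_\al$, so $x\in\ilcc[r_\al,z_\al]$.

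The order type of the resulting sum depends only on $\mu$. Gluing the isomorphisms piece by piece gives the claim, and hence $\ilcc[a,b]\cong\ilcc[c,d]$.

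\textbf{Main obstacle.} The delicate point is the limit stage: showing that $z_\ga$ exists, that $z_\ga>a$, and that $\cf_l(z_\ga,\lexab)=\la$. All three rest on $\cf\ga<\mu\le\la$ together with the dichotomy $\la\in\sset{\ka_0,\ka_1}$ of Proposition \ref{p:blex:1}(1). The bookkeeping that the pieces exactly cover $\ilpc(a,m]$ with the right order type also requires care.
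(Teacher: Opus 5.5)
Your proof is correct and follows essentially the same route as the paper. Like the paper, you split $\ilcc[a,b]$ at a jump so that each half has cofinality $\la$ at its inner end. You then cut each half along a transfinite sequence of length $\cf_r(a,\lexab)$ (resp.\ $\cf_l(b,\lexab)$) into intervals that are copies of $\lexab$ by Theorem~\ref{t:blex:2}, and glue; comparing both intervals with a canonical sum instead of matching $\ilcc[a,b]$ and $\ilcc[c,d]$ directly is only a cosmetic difference. Your explicit limit step is in fact more careful than the paper's sketch: you cut at $z_\ga=\inf_{\be<\ga}l_\be$ and use $\cf\ga<\la$ to get $\cf_l(z_\ga,\lexab)=\la$, whereas the paper's limit cut points $x_\ga$ must be suprema of the earlier ones (so they cannot in general be chosen in $M$) and are handled only implicitly.
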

\begin{proof}
Let 
$\al=\cf_r(a,\lexab)=\cf_r(c,\lexab)$ and
$\be = \cf_l(b,\lexab)=\cf_l(d,\lexab)$.

Clearly, (1) $\rarr$ (2). Let us prove (2) $\rarr$ (1).
Take $(a',b'),(c',d')\in \jumps{\lexab}$ such that $a< a'<b'< b$ and $c< c'<d'< d$.
By Proposition \ref{p:blex:1}, $\cf_l(a',\lexab)=\cf_r(b',\lexab)=\cf_l(c',\lexab)=\cf_r(d',\lexab)=\la$. Since $\ilcc[a,b]=\ilcc[a,a']\cup \ilcc[b',b]$ and $\ilcc[c,d]=\ilcc[c,c']\cup \ilcc[d',d]$, it suffices to prove that $\ilcc[a,a']$ is order isomorphic to $\ilcc[c,c']$ and $\ilcc[b',b]$ is order isomorphic to $\ilcc[d',d]$. This fact follows from the theorem being proved in the case $\al=\la$ or $\be=\la$. Let us prove (2) $\rarr$ (1) in the case $\al=\la$. The set $M=\set{x\in\lexab: \pi\chi(x,\lexab)=\la}$ is dense in $\lexab$. Take strictly increasing sequences $\sq{x_\te}{\te<\be}$ and $\sq{y_\te}{\te<\be}$ in $\lexab$ with $x_0=a$, $y_0=c$, $x_\te,y_\te\in M$ for $\te>0$, $\sq{x_\te}{\te<\be}$ cofinal in $\ilcp[a,b)$, and $\sq{y_\te}{\te<\be}$ cofinal in $\ilcp[c,d)$. Then $\cf_r(x_\te,\lexab)=\cf_l(x_{\te+1},\lexab)=\cf_r(y_\te,\lexab)=\cf_l(y_{\te+1},\lexab)=\la$ for $\te<\be$. By Theorem \ref{t:blex:2} there is an order isomorphism $f_\te: \ilcc[x_\te,x_{\te+1}]\to \ilcc[y_\te,y_{\te+1}]$ for $\te<\be$. Define an order isomorphism $f: \ilcc[a,b]\to \ilcc[c,d]$:
\[
f(x)=\begin{cases}
y_\te, & \text{if }x=x_\te\text{ for some }\te<\be,
\\
f_\te(x), & \text{if }x\in \ilpp(x_\te,x_{\te+1})\text{ for some }\te<\be,
\\
d, & \text{if }x=b
\end{cases}
\]
for $x\in \ilcc[a,b]$.
\end{proof}


\section{Homogeneous subspaces of the space of binary sequences} \label{sec:blexhome}
\begin{proposition}\label{p:blexhome:1-1}
Let $\la$ be a regular cardinal and $x,y\in \lexab$.
The following conditions are equivalent:
\begin{enumerate}
\item
$f(x)=y$ for some $f\in\oip\lexab$;
\item
$\cf_r(x,\lexab)=\cf_r(y,\lexab)$ and
$\cf_l(x,\lexab)=\cf_l(y,\lexab)$.
\end{enumerate}
\end{proposition}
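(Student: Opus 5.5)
The forward implication (1) $\rarr$ (2) is immediate: an order automorphism $f$ maps $\ilpp(-\infty,x)$ order-isomorphically onto $\ilpp(-\infty,y)$ and $\ilpp(x,+\infty)$ onto $\ilpp(y,+\infty)$, so cofinalities from the left and from the right are preserved.

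For (2) $\rarr$ (1), let $\al=\cf_l(x,\lexab)=\cf_l(y,\lexab)$ and $\be=\cf_r(x,\lexab)=\cf_r(y,\lexab)$. We build the automorphism as a union of two order isomorphisms, $g:\ilcc[0_\la,x]\to\ilcc[0_\la,y]$ and $h:\ilcc[x,1_\la]\to\ilcc[y,1_\la]$, each with $x\mapsto y$. The union is then an increasing bijection of $\lexab$ sending $x$ to $y$. By symmetry, applying $R_\la$ (which swaps $\cf_l$ and $\cf_r$ and maps $1_\la\mapsto 0_\la$), it suffices to construct $h$.

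First dispose of the degenerate cases.
\begin{itemize}
\item If $\be=0$, then $x=y=1_\la$ by Proposition \ref{p:blex:1+1}(1), and $h$ is the identity.
\item If $\be=1$, then $x,y$ are left jump points. Let $x',y'$ be the corresponding right jump points given by Proposition \ref{p:blex:1}(4). It suffices to have an order isomorphism $\ilcc[x',1_\la]\to\ilcc[y',1_\la]$, extended by $x\mapsto y$. Now $\cf_r(x',\lexab)=\cf_r(y',\lexab)=\la$ by Proposition \ref{p:blex:1}(1), and $\cf_l(1_\la,\lexab)=\la$. So Theorem \ref{t:blex:2} makes both intervals order isomorphic to $\lexab$, hence to each other.
\end{itemize}

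In the remaining case $\be\geq\om$, and neither interval $\ilcc[x,1_\la]$ nor $\ilcc[y,1_\la]$ has isolated points. We apply Theorem \ref{t:blex:3} to $a=x$, $b=1_\la$, $c=y$, $d=1_\la$. The hypotheses hold: $\cf_r(x)=\cf_r(y)=\be\geq\om$ and $\cf_l(1_\la)=\la\geq\om$. This yields an order isomorphism $h:\ilcc[x,1_\la]\to\ilcc[y,1_\la]$, which necessarily sends the minimum $x$ to the minimum $y$.

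The left half is symmetric. Finally, gluing $g$ and $h$ along $x\mapsto y$ gives an increasing bijection of $\lexab$, hence an element of $\oip\lexab$.

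The main obstacle is the bookkeeping at the endpoints rather than any deep step. One must check that $x=0_\la$ or $x=1_\la$ forces $y$ to be the same endpoint; this holds because $\cf_l=0$ characterizes $0_\la$ and $\cf_r=0$ characterizes $1_\la$. One must also check that the jump cases match up on both sides. Everything substantive is contained in Theorems \ref{t:blex:2} and \ref{t:blex:3}.
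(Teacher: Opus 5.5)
Your proof is correct and follows essentially the paper's argument: glue order isomorphisms $\ilcc[0_\la,x]\to\ilcc[0_\la,y]$ and $\ilcc[x,1_\la]\to\ilcc[y,1_\la]$ obtained from Theorems \ref{t:blex:2} and \ref{t:blex:3}, split at the jump when $x$ is a jump point, and use conjugation by $R_\la$ for the mirror-image case. The only difference is organizational: you run the case analysis separately on each half rather than on the pair $(\cf_l,\cf_r)$ jointly, which keeps the left/right jump bookkeeping cleaner than in the paper's write-up.
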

\begin{proof}
Let 
$\al=\cf_r(x,\lexab)=\cf_r(y,\lexab)$ and
$\be = \cf_l(x,\lexab)=\cf_l(y,\lexab)$.
By Proposition \ref{p:blex:1}, $\la\in\sset{\al,\be}$.
Clearly, (1) $\rarr$ (2). Let us prove (2) $\rarr$ (1).
If $0\in\sset{\al,\be}$, then $x=y$ and the statement is obvious.

Consider the case $1\in\sset{\al,\be}$. Consider the case $\al=1$. Then $x$ and $y$ are right jump points and $\be=\la$. Let $x'$ and $y'$ be the corresponding left jump points. Then $\cf_r(0_\la)=\cf_l(x')=\cf_r(x)=\cf_l(y')=\cf_r(y)=\cf_l(1_\la)=\la$. By Theorem \ref{t:blex:3} there exist order isomorphisms $f_1: \ilcc[0_\la,x']\to \ilcc[0_\la,y']$ and $f_2: \ilcc[x,1_\la]\to \ilcc[y,1_\la]$. For $u\in \lexab$, set
\[
f(u)=\begin{cases}
f_1(u), & u \leq x',
\\
f_2(u), & u \geq x.
\end{cases}
\]
Then $f\in\oip{\lexab}$ and $f(x)=y$.
Consider the case $\be=1$. Then $\cf_l(R_\la(x),\lexab)=\cf_l(R_\la(y),\lexab)=1$ and $\cf_r(R_\la(x),\lexab)=\cf_r(R_\la(y),\lexab)=\la$. Hence $f'(R_\la(x))=R_\la(y)$ for some $f'\in \oip{\lexab}$. We obtain $f(x)=y$ for $f=R_\la\circ f' \circ R_\la\in \oip\lexab$.

Consider the case when $\al$ and $\be$ are infinite cardinals.
Since $0_\la<x<1_\la$, $0_\la<y<1_\la$, $\al\geq \om$, and $\be\geq \om$, Theorem \ref{t:blex:3} gives order isomorphisms $f_1: \ilcc[0_\la,x]\to \ilcc[0_\la,y]$ and $f_2: \ilcc[x,1_\la]\to \ilcc[y,1_\la]$. For $u\in \lexab$, set
\[
f(u)=\begin{cases}
f_1(u), & u < x,
\\
y, & u=x,
\\
f_2(u), & u > x.
\end{cases}
\]
Then $f\in\oip{\lexab}$ and $f(x)=y$.
\end{proof}

\begin{proposition}\label{p:blexhome:1}
Let $\la$ be a regular cardinal and $\ka\leq\la$ a cardinal.
\begin{enumerate}
\item
If $x,y\in \homg l,\ka\la$ or $x,y\in \homg r,\ka\la$, then there is $f\in\oip X$ with $f(x)=y$.
\item
If $x\in \homg l,\ka\la$, $y\in \homg r,\ka\la$, or $x\in \homg r,\ka\la$, $y\in \homg l,\ka\la$, then there is $f\in\oim X$ with $f(x)=y$.
\item
If $x,y\in \homg lr,\ka\la$, then there is $f\in\oipm X$ with $f(x)=y$ and $f(\homg lr,\ka\la)=\homg lr,\ka\la$.
\item
if $f\in\oip X$, then $f(\homg l,\ka\la)=\homg l,\ka\la$, $f(\homg r,\ka\la)=\homg r,\ka\la$, and $f(\homg lr,\ka\la)=\homg lr,\ka\la$;
\item
$R_\la(\homg l,\ka\la)=\homg r,\ka\la$, $R_\la(\homg r,\ka\la)=\homg l,\ka\la$, and $R_\la(\homg lr,\ka\la)=\homg lr,\ka\la$;
\item
if $f\in\oim X$, then $f(\homg l,\ka\la)=\homg r,\ka\la$, $f(\homg r,\ka\la)=\homg l,\ka\la$, and $f(\homg lr,\ka\la)=\homg lr,\ka\la$.
\end{enumerate}
\end{proposition}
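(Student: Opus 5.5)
Here $X$ in the statement means $\lexab$, and the whole argument rests on Proposition \ref{p:blexhome:1-1}. That result says $x$ and $y$ lie in the same $\oip\lexab$-orbit exactly when $\cf_l(x,\lexab)=\cf_l(y,\lexab)$ and $\cf_r(x,\lexab)=\cf_r(y,\lexab)$. Each class $\homg l,\ka\la$ and $\homg r,\ka\la$ is defined by prescribing precisely this pair of cofinalities. So I would first prove the invariance statements (4), (5), (6), and then get the transitivity statements (1), (2), (3) from them together with Proposition \ref{p:blexhome:1-1}.

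For (4), take $f\in\oip X$. Since $f$ is an order automorphism, it maps $\ilpp(-\infty,x)$ onto $\ilpp(-\infty,f(x))$ and $\ilpp(x,+\infty)$ onto $\ilpp(f(x),+\infty)$, preserving the order. Cofinalities are order invariants, so $\cf_l(f(x),X)=\cf_l(x,X)$ and $\cf_r(f(x),X)=\cf_r(x,X)$. Hence $f(\homg l,\ka\la)\subset\homg l,\ka\la$, and applying the same to $f^{-1}$ gives equality; the same holds for $\homg r,\ka\la$ and their union. For (6), an order reversing automorphism maps $\ilpp(-\infty,x)$ onto $\ilpp(f(x),+\infty)$ with the order reversed, which gives $\cf_l(x,X)=\cf_r(f(x),X)$ and $\cf_r(x,X)=\cf_l(f(x),X)$. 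The classes $\homg l,\ka\la$ and $\homg r,\ka\la$ are therefore swapped, and their union $\homg lr,\ka\la$ is preserved. Statement (5) is the special case $f=R_\la\in\oim X$.

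For (1), if $x,y\in\homg l,\ka\la$ then $\cf_l(x)=\cf_l(y)=\ka$ and $\cf_r(x)=\cf_r(y)=\la$, so Proposition \ref{p:blexhome:1-1} yields $f\in\oip X$ with $f(x)=y$; the case of $\homg r,\ka\la$ is the same. For (2), with $x\in\homg l,\ka\la$ and $y\in\homg r,\ka\la$, we have $R_\la(x)\in\homg r,\ka\la$ by (5). By (1) there is $g\in\oip X$ with $g(R_\la(x))=y$, and then $f=g\circ R_\la\in\oim X$ satisfies $f(x)=y$. The symmetric case is handled the same way. For (3), depending on which of the two classes $x$ and $y$ lie in, either (1) or (2) gives $f\in\oipm X$ with $f(x)=y$. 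Then (4) or (6) shows $f(\homg lr,\ka\la)=\homg lr,\ka\la$.

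I see no real obstacle here. The only care points are the degenerate values of $\ka$ and the definitions. For $\ka\in\{0,1\}$ the classes are singletons or the jump points, and Proposition \ref{p:blexhome:1-1} still covers them. When $\ka=\la$ all the classes coincide with $\homm\la$; then both (1) and (2) must hold inside one class, and this is fine because $R_\la$ maps $\homm\la$ to itself. I would check explicitly that the composition trick in (2) is valid in this coincident case.
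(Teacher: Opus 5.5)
Your proposal is correct and follows the paper's proof essentially step for step. Item (1) comes from Proposition~\ref{p:blexhome:1-1}, item (2) from precomposing an order automorphism with $R_\la$, and item (3) from combining (1)/(2) with (4)/(6); items (4)--(6) are the invariance (respectively the swap) of $\cf_l$ and $\cf_r$ under order automorphisms (respectively order reversing automorphisms), which the paper calls obvious and you spell out.
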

\begin{proof}
Items (4), (5) and (6) are obvious. Item (1) follows from Proposition \ref{p:blexhome:1-1}.

Let us prove (2). Let $x'=R_\la(x)$. By (1), there is $f'\in\oip \lexab$ with $f'(x')=y$. Let $f = f' \circ R_\la$. Then $f\in\oim \lexab$ and $f(x)=y$.

Let us prove (3). If $x,y\in \homg l,\ka\la$ or $x,y\in \homg r,\ka\la$, then (3) follows from (1) and (4).
If $x\in \homg l,\ka\la$, $y\in \homg r,\ka\la$, or $x\in \homg r,\ka\la$, $y\in \homg l,\ka\la$, then (3) follows from (2) and (6).
\end{proof}

The following proposition follows from Proposition \ref{p:blexhome:1}.

\begin{proposition}\label{p:blexhome:2}
Let $\la$ be a regular cardinal.
\begin{enumerate}
\item
The family
\begin{gather*}
\{ \homg l,0\la, \homg r,0\la, \homg l,1\la, \homg r,1\la, \homm\la \} \cup 
\\
\set{\homg l,\ka\la, \homg r,\ka\la: \ka<\la\text{ is a regular cardinal}}
\end{gather*}
is the family of orbits of the action of the group $\oip\lexab$ on $\lexab$.
\item
The family
\begin{gather*}
\{ \homg lr,0\la,  \homg lr,1\la, \homm\la \} \cup 
\\
\set{\homg lr,\ka\la: \ka<\la\text{ is a regular cardinal}}
\end{gather*}
is the family of orbits of the action of the group $\oipm\lexab$ on $\lexab$.
\end{enumerate}
\end{proposition}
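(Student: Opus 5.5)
To prove Proposition \ref{p:blexhome:2}, I will combine three earlier facts: the partition from Proposition \ref{p:blex:1+1}(5), the transitivity statements of Proposition \ref{p:blexhome:1}, and the invariance statements of Proposition \ref{p:blexhome:1}(4),(6). An orbit decomposition is determined once we know two things: each listed set is invariant under the group, and the group acts transitively on each listed set. Since the listed sets partition $\lexab$, these two properties force them to be exactly the orbits.

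\textbf{Item (1).} By Proposition \ref{p:blex:1+1}(5) the family is a partition of $\lexab$. Every listed set has the form $\homg l,\ka\la$ or $\homg r,\ka\la$, with $\ka\in\sset{0,1}$, or $\ka<\la$ regular, or $\ka=\la$ (in the last case $\homm\la=\homg l,\la\la=\homg r,\la\la$). Invariance under every $f\in\oip\lexab$ is Proposition \ref{p:blexhome:1}(4). It can also be seen directly, since an order automorphism preserves $\cf_l(x,\lexab)$ and $\cf_r(x,\lexab)$. Transitivity on each such set is Proposition \ref{p:blexhome:1}(1), or directly Proposition \ref{p:blexhome:1-1}, because all points of a listed set share the same pair $(\cf_l,\cf_r)$.

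\textbf{Item (2).} Each listed set $\homg lr,\ka\la$ is the union of the two $\oip$-orbits $\homg l,\ka\la$ and $\homg r,\ka\la$. By the partition in item (1), these sets again partition $\lexab$. Invariance under $\oip\lexab$ follows from item (4) of Proposition \ref{p:blexhome:1}, and invariance under $\oim\lexab$ from item (6). Transitivity on each set is Proposition \ref{p:blexhome:1}(3).

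The only point needing care is bookkeeping for the degenerate cases $\ka\in\sset{0,1}$, where the ``regular cardinal'' hypothesis is absent. Proposition \ref{p:blexhome:1} is stated for any cardinal $\ka\le\la$, so it covers these cases: $0_\la$ and $1_\la$ are swapped by $R_\la$, and left and right jump points are exchanged by $R_\la$. One checks that the cases used in Proposition \ref{p:blexhome:1-1} include $\al$ or $\be$ in $\sset{0,1}$, which they do. No real obstacle is expected.
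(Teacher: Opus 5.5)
Your proposal is correct and follows the paper's route: the paper simply says the result follows from Proposition \ref{p:blexhome:1}. Your write-up spells out the steps left implicit there, namely the partition from Proposition \ref{p:blex:1+1}(5), invariance from items (4) and (6), and transitivity from items (1) and (3).
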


\begin{theorem}\label{t:blexhome:1}
Let $\ka\leq\la$ be regular cardinals. The spaces $\homg l,\ka\la$, $\homg r,\ka\la$ are order-homogeneous LOTS. The space $\homg lr,\ka\la$ is a $\pm$-order-homogeneous LOTS.
\end{theorem}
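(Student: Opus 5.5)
The plan is to restrict the automorphisms of $\lexab$ provided by Proposition \ref{p:blexhome:1} to the subspaces in question. By Proposition \ref{p:blex:1+1}(3)(b), each of $\homg l,\ka\la$, $\homg r,\ka\la$ and $\homg lr,\ka\la$ carries the order topology of the order inherited from $\lexab$, so each is a LOTS. Consequently any order isomorphism (or order-reversing isomorphism) of such a set onto itself is automatically a self-homeomorphism. So it suffices to produce, for any two points $x,y$ of the space, an element of $\oip{\lexab}$ (respectively $\oipm{\lexab}$) that sends $x$ to $y$ and maps the subspace onto itself. Its restriction is then an order automorphism (respectively an order automorphism or an order-reversing automorphism) of the subspace.

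First, let $x,y\in\homg l,\ka\la$. Proposition \ref{p:blexhome:1}(1) gives $f\in\oip{\lexab}$ with $f(x)=y$. By Proposition \ref{p:blexhome:1}(4), $f(\homg l,\ka\la)=\homg l,\ka\la$. Since $f$ is a bijection of $\lexab$, its restriction is an increasing bijection of $\homg l,\ka\la$ onto itself, that is, an element of $\oip{\homg l,\ka\la}$. The case $\homg r,\ka\la$ is identical. Hence both spaces are order-homogeneous.

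Next, let $x,y\in\homg lr,\ka\la$. Proposition \ref{p:blexhome:1}(3) gives $f\in\oipm{\lexab}$ with $f(x)=y$ and $f(\homg lr,\ka\la)=\homg lr,\ka\la$. The restriction is monotone and bijective on $\homg lr,\ka\la$, so it lies in $\oipm{\homg lr,\ka\la}$. Hence $\homg lr,\ka\la$ is $\pm$-order-homogeneous.

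The only point needing care is the invariance of the subspaces under these maps, and this is exactly Proposition \ref{p:blexhome:1}(4)--(6). It holds because order automorphisms preserve $\cf_l$ and $\cf_r$ at every point, while order-reversing automorphisms swap them. So I expect no genuine obstacle beyond checking that the restriction of a bijection to an invariant subset is again a bijection of that subset.
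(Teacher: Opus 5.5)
Your proof is correct and follows essentially the same route as the paper. The paper also uses Proposition~\ref{p:blex:1+1}(3)(b) to see that the three sets are LOTS in the inherited order, and then invokes the orbit description of Proposition~\ref{p:blexhome:2}, which is Proposition~\ref{p:blexhome:1} repackaged; you cite Proposition~\ref{p:blexhome:1}(1),(3),(4) directly and spell out the restriction to an invariant subset, which the paper leaves implicit.
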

\begin{proof}
By Proposition \ref{p:blex:1+1}(3)(b), the spaces $\homg l,\ka\la$, $\homg r,\ka\la$ and $\homg r,\ka\la$ are LOTS with the order inherited from $\lexab$. By Proposition \ref{p:blexhome:2}(1), the LOTS $\homg l,\ka\la$ and $\homg r,\ka\la$ are order-homogeneous. By Proposition \ref{p:blexhome:2}(2), the LOTS $\homg lr,\ka\la$ is $\pm$-order-homogeneous.
\end{proof}

\begin{cor}\label{c:blexhome:1}
Let $\ka\leq\la$ be regular cardinals. The spaces $\homg l,\ka\la$, $\homg r,\ka\la$ and $\homg lr,\ka\la$ are homogeneous LOTS.
\end{cor}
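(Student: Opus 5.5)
This corollary follows directly from Theorem \ref{t:blexhome:1}, using the chain of implications recorded in Section \ref{sec:defs}:
\[
\text{order-homogeneity }\rarr \text{ $\pm$-order-homogeneity }\rarr \text{homogeneity.}
\]

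The plan is as follows. By Theorem \ref{t:blexhome:1}, $\homg l,\ka\la$ and $\homg r,\ka\la$ are order-homogeneous LOTS, and $\homg lr,\ka\la$ is a $\pm$-order-homogeneous LOTS. The fact that all three are LOTS in the inherited order comes from Proposition \ref{p:blex:1+1}(3)(b).

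It remains to check the implication from $\pm$-order-homogeneity to homogeneity. Let $X$ be one of the three spaces and take $x,y\in X$. There is $f\in\oipm X$ with $f(x)=y$. Here $f$ is a bijection of $X$ onto itself that is either increasing or decreasing. Such an $f$ maps open intervals onto open intervals: an increasing bijection sends $\ilpp(a,b)$ to $\ilpp(f(a),f(b))$, and a decreasing one sends it to $\ilpp(f(b),f(a))$, with the unbounded intervals handled in the same way. The inverse of $f$ is again monotone of the same type. Since the open intervals form a base of the order topology, $f$ is a homeomorphism, so $X$ is homogeneous.

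One point needs care. The order isomorphisms given by Proposition \ref{p:blexhome:1} are defined on all of $\lexab$, not on $X$ itself. Their restrictions to $X$ are nevertheless self-maps of $X$, since by Proposition \ref{p:blexhome:1}(3),(4),(6) these maps preserve the sets $\homg l,\ka\la$, $\homg r,\ka\la$ and $\homg lr,\ka\la$. The restrictions are therefore monotone bijections of $X$, and the argument above applies. There is no real obstacle here; the only thing to confirm is that this restriction step has already been carried out in Theorem \ref{t:blexhome:1}, which it has.
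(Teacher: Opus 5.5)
Your proposal is correct and follows the paper's route: the corollary is read off from Theorem \ref{t:blexhome:1} together with the implication chain order-homogeneity $\rarr$ $\pm$-order-homogeneity $\rarr$ homogeneity from Section \ref{sec:defs}, which holds because monotone bijections are homeomorphisms. Your remark that the automorphisms of $\lexab$ restrict to the subspaces via Proposition \ref{p:blexhome:1}(3),(4),(6) just makes explicit a step the paper leaves implicit.
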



\section{Homogeneous extensions of LOTS} \label{sec:exos}
For a LOTS $X$ denote 
\begin{align*}
\er(X) &= \set{x\in X: x\text{ is the minimum or a right jump point of }X},
\\
\el(X) &= \set{x\in X: x\text{ is the maximum or a left jump point of }X},
\\
\ea(X) &=  \el(X)\cup\er(X).
\end{align*}

Let $L$ and $R$ be LOTS such that $L$ has a minimum and no maximum, and $R$ has a maximum and no minimum.
Denote
\begin{align*}
\cD(X,L,R) &= (\el(X) \times R) \cup (\er(X) \times L),
\\
\ex(X,L,R) &= X \cup  \cD(X,L,R),
\\
\ex_0(X,L,R) &= X,
\\
\ex_n(X,L,R) &= \ex(\ex_{n-1}(X,L,R),L,R),
\\
\ex_\om(X,L,R) &= \bigcup_{n\in\om} \ex_n(X,L,R).
\end{align*}

We introduce a linear order on the set $\ex(X,L,R)$. On $X\subset \ex(X,L,R)$ the order is the same as on $X$. On $\cD(X,L,R)\subset X\times Y$ we use the lexicographic order. Let us compare $x\in X$ and $(z,y)\in \cD(X)$: $x<(z,y)$ if $x<z$, or $x=z$ and $(z,y)\in \el(X)\times R$; $x>(z,y)$ if $x>z$, or $x=z$ and $(z,y)\in \er(X)\times L$.


The following statement follows from Proposition \ref{p:1+4} and the construction of the spaces $\ex(X,L,R)$ and $\ex_\om(X,L,R)$.

\begin{proposition}\label{p:exos:1}
Let $X$ be a compact LOTS, $L$ and $R$ locally compact LOTS, $L$ having a minimum and no maximum, and $R$ having a maximum and no minimum. Then $X$ embeds in $\ex(X,L,R)$, $\ex(X,L,R)$ is compact, and $\ex_\om(X,L,R)$ is $\sigma$-compact.
\end{proposition}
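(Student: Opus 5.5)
The plan is to use Proposition \ref{p:1+4}(2): a LOTS is compact exactly when it has no gaps and has both a minimum and a maximum. So for $\ex(X,L,R)$ we check that the order is linear, that its extreme points exist, and that every proper cut is either a jump or has an endpoint.

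First, linearity. The order is defined on $X$, on $\cD(X,L,R)$, and between the two pieces. Intuitively, each $x\in\el(X)$ gets a copy of $R$ inserted immediately to its right, and each $x\in\er(X)$ gets a copy of $L$ inserted immediately to its left. Transitivity is a routine case check.

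Second, the extreme points. The minimum $m$ of $X$ lies in $\er(X)$, so the copy $\sset m\times L$ sits below $m$. Hence the new minimum is $(m,\min L)$. Symmetrically, the new maximum is $(M,\max R)$, where $M=\max X$. This uses that $L$ has a minimum and $R$ has a maximum.

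Third, the absence of gaps, which is the main step. Let $(A,B)$ be a proper cut of $\ex(X,L,R)$ and consider its trace on $X$.

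\emph{Cut inside a block.} Suppose the cut falls inside a single block $\sset z\times R$, with $z\in\el(X)$. The block $\sset z\times R$ is convex, and its neighbours are $z$ (which precedes it) and the successor jump point $b$ of $z$ (which follows it), or nothing if $z=M$.
\begin{itemize}
\item If the cut is a proper cut of $R$, then it is not a gap, because $R$ has no gaps by Proposition \ref{p:1+4}(1) (local compactness).
\item If the cut is at the left end of the block, then $z=\max A$. This exists because $R$ has no minimum, so nothing lies between $z$ and the block.
\item If the cut is at the right end of the block, then $B$ has minimum $b$, or more precisely $\min B=(b,\min L)$ when $b\in\er(X)$; or else the cut is not proper. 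This uses that $R$ has a maximum.
\end{itemize}
Blocks $\sset z\times L$ are handled symmetrically.

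\emph{Cut not inside a block.} Otherwise the cut induces a cut $(A\cap X,B\cap X)$ of $X$. Since $X$ is compact, this cut is not a gap.
\begin{itemize}
\item If $A\cap X$ has a maximum $a$ that is not a left jump point, then $a$ is not followed by an $R$-block, and the points of $B$ accumulate to $a$ from the right inside $X$. We must check that $A$ has maximum $a$: no inserted point lies in $A$ above $a$ unless it is in a block belonging to $a$.
\item If instead the induced cut of $X$ is a jump $(a,b)$, then the inserted blocks $\sset a\times R$ and $\sset b\times L$ lie between $a$ and $b$. Since $R$ has a maximum and $L$ has a minimum, the cut either has an endpoint or is the jump $\bigl((a,\max R),(b,\min L)\bigr)$.
\end{itemize}
This bookkeeping is the main obstacle, and I would organize it by the position of the cut relative to the convex pieces.

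Embedding and $\sigma$-compactness. The inclusion $X\subset\ex(X,L,R)$ is an order embedding onto a closed set. To see that the subspace topology agrees with the topology of $X$, note that at non-jump points nothing new is inserted nearby, and at jump points the inserted blocks lie on the correct side. One can also argue via Proposition \ref{p:1+0a}(2)(c), since $X$ is compact in the order topology of $\ex(X,L,R)$, and then show the orders coincide.

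Finally, by induction each $\ex_n(X,L,R)$ is a compact LOTS containing $\ex_{n-1}(X,L,R)$ as a compact subspace. Hence $\ex_\om(X,L,R)=\bigcup_n\ex_n(X,L,R)$ is a countable union of compact subspaces, provided each $\ex_n$ carries the subspace topology of $\ex_\om$. That holds because each $\ex_n$ is compact in the order topology of $\ex_{n+1}$, and the same argument extends to $\ex_\om$.
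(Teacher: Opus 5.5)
\textbf{The gap: local compactness does not rule out gaps.} Your route is the one the paper intends: check the criterion of Proposition \ref{p:1+4}(2) block by block. The paper's own proof is only the remark that the statement follows from Proposition \ref{p:1+4} and the construction.

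The step that fails is the one for a cut falling properly inside a block $\sset z\times R$. There you say $R$ has no gaps ``by Proposition \ref{p:1+4}(1) (local compactness)''. But Proposition \ref{p:1+4}(1) says only that a LOTS without gaps is locally compact, and the converse is false.

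For example, take $L=[0,1)\cup(2,3)$ with the order of $\mathbb{R}$. It has a minimum and no maximum. Its order topology is the Euclidean subspace topology, so it is locally compact. Yet $([0,1),(2,3))$ is a gap. Now let $R=\rev L$ and let $X$ be any compact LOTS. The block $\sset{\min X}\times L$ is exactly the initial segment of $\ex(X,L,R)$ below $\min X$. The cut separating $\sset{\min X}\times[0,1)$ from the rest is therefore a gap of $\ex(X,L,R)$, so by Proposition \ref{p:1+4}(2) the space $\ex(X,L,R)$ is not compact. This step cannot be repaired: under the stated hypotheses the compactness claim is false.

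\textbf{The needed hypothesis.} What is required is that $L$ and $R$ have no gaps, equivalently that all their closed intervals are compact. The paper's one-line proof tacitly relies on this as well. It holds in the application to $\Hs$, where $L=\om\times\sS$ and $R=\rev(\om\times\sS)$: $\sS$ is compact and consecutive copies of $\sS$ are separated by jumps. Under that hypothesis, the rest of your case analysis works, up to the routine symmetric cases.

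\textbf{Two smaller repairs.}
\begin{enumerate}
\item To apply Proposition \ref{p:1+0a}(2)(c) for the embedding, you first need $X$ compact as a subspace of $\ex(X,L,R)$. This holds because every block is an open interval of $\ex(X,L,R)$, so $X$ is closed in the compact space $\ex(X,L,R)$.
\item For $\sigma$-compactness, the argument for $\ex_n(X,L,R)\subset\ex_{n+1}(X,L,R)$ does not literally extend to the noncompact $\ex_\om(X,L,R)$. Instead, note that every basic interval of $\ex_\om(X,L,R)$ has its endpoints in some $\ex_k(X,L,R)$ with $k\geq n$. Its trace on $\ex_n(X,L,R)$ is then the trace of an open interval of $\ex_k(X,L,R)$, hence open in $\ex_n(X,L,R)$.
\end{enumerate}
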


Below we shall omit the parameters $L$ and $R$: we shall write $\cD(X)$ instead of $\cD(X,L,R)$, and similarly for the other notation.

For $x\in \ea(X)$ and $M\subset \ea(X)$ set
\begin{align*}
T_{x}(X) &= \begin{cases}
 R,& x\in \el(X),
\\
 L,& x\in \er(X),
\end{cases}
&
\cD_{x}(X) &= \sset x \times T_x(X),
\\
\cD_{M}(X) &= \bigcup_{x\in M} \cD_{x}(X),
&
\ex(X;M) &= X \cup \cD_{M}(X).
\end{align*}

\begin{statement}\label{s:exos:1}
Let $M\subset \ea(X)$.
Then $\ex_\om(\ex(X;M))=\ex_\om(X)$.
\end{statement}
\begin{proof}
By induction on $n\in\om$ we define sets $M_n\subset \ea(X_n)$, where $X_n=\ex_n(X)$.
Set $M_0=M$. Set $M_{n+1}=\ea(\cD_{M_n}(X_n))$ for $n>0$. Set $Z_n=\ex(X_n;M_n)$ for $\nom$. Then $Z_0=\ex(X;M)$, $Z_{n+1}=\ex(Z_n)$, and $X_n\subset Z_n\subset X_{n+1}$. Hence $\ex_\om(X)=\bigcup_\nom X_n=\bigcup_\nom Z_n=\ex_\om(Z_0)$.
\end{proof}

\begin{statement}\label{s:exos:2}
If $R$ is order reversing isomorphic to $L$, then every $f\in\oipm X$ extends to some $\hat f\in \oipm{\ex_\om(X)}$.
\end{statement}
\begin{proof}
Let $\ph: L\to R$ and $\psi: R\to L$ be order reversing isomorphisms such that $\ph\circ \psi$ is the identity map of $L$ onto itself.
\begin{claim}\label{cl:exos:1-1}
Let $Y$ be a LOTS. Every $f\in\oipm Y$ extends to some $\hat f\in \oipm{\ex(Y)}$.
\end{claim}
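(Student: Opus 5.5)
The plan is to define $\hat f$ on $\ex(Y)=Y\cup\cD(Y)$ explicitly. Keep $\hat f=f$ on $Y$, and act on the fibres $\cD_x(Y)=\sset x\times T_x(Y)$ by either the identity or the isomorphisms $\ph,\psi$ of the second coordinate. The starting observation is that $f$ permutes $\ea(Y)$, because order automorphisms and order reversing automorphisms preserve the set of jump points and the set $\sset{\min Y,\max Y}$.

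First take $f\in\oip Y$. Then $f$ maps right jump points to right jump points, left jump points to left jump points, and the minimum and maximum to themselves. So $f(\el(Y))=\el(Y)$, $f(\er(Y))=\er(Y)$ and $T_{f(x)}(Y)=T_x(Y)$. Set $\hat f(x,t)=(f(x),t)$ for $(x,t)\in\cD(Y)$. This is a bijection of $\ex(Y)$ onto itself. It preserves the lexicographic order on $\cD(Y)$ because $f$ is increasing. It also preserves comparisons between $x\in Y$ and $(z,t)\in\cD(Y)$. Indeed, $x<(z,t)$ means $x<z$, or $x=z$ with $z\in\el(Y)$ and $t\in R$; both conditions are transported by $f$, since $f$ preserves $<$ on $Y$ and preserves membership in $\el(Y)$.

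Next take $f\in\oim Y$. Now $f$ swaps left and right jump points and swaps $\min Y$ with $\max Y$, so $f(\el(Y))=\er(Y)$ and $f(\er(Y))=\el(Y)$. Define $\hat f(x,t)=(f(x),\ph(t))$ for $x\in\er(Y)$, $t\in L$, and $\hat f(x,t)=(f(x),\psi(t))$ for $x\in\el(Y)$, $t\in R$. Since $\ph$ and $\psi$ are mutually inverse, this is a bijection of $\cD(Y)$, with inverse built from $f^{-1}$ in the same way.

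Then check that $\hat f$ reverses the order. On $\cD(Y)$ the lexicographic order is reversed: different first coordinates are reversed by $f$, and equal first coordinates are reversed by $\ph$ or $\psi$. For a mixed pair, suppose $x\in Y$ and $(z,t)\in\cD(Y)$ with $x<(z,t)$. If $x<z$, then $f(x)>f(z)$, so $\hat f(x)>\hat f(z,t)$. If $x=z\in\el(Y)$ and $t\in R$, then $f(z)\in\er(Y)$ and $\psi(t)\in L$. By the definition of the order this gives $(f(z),\psi(t))<f(z)$, which is exactly the reversal needed. The case $x>(z,t)$ is symmetric. These routine case checks are the only real content of the claim.

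Statement \ref{s:exos:2} then follows by induction. Apply the Fact to $Y=\ex_n(X)$ to obtain $f_{n+1}\in\oipm{\ex_{n+1}(X)}$ extending $f_n$, starting from $f_0=f$. Each step keeps the same type, increasing or decreasing. Set $\hat f=\bigcup_n f_n$; it is an order (reversing) automorphism of $\ex_\om(X)$, because $\ex_\om(X)=\bigcup_n\ex_n(X)$ is an increasing union on which the orders are compatible.

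The one point needing care is the mixed comparison for $f\in\oim Y$: one must check that the side on which a fibre is attached flips correctly. Fibres $\sset z\times R$ lie just after $z$, and their images $\sset{f(z)}\times L$ lie just before $f(z)$, which is consistent with order reversal.
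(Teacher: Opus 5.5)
Your proposal is correct and is essentially the paper's proof: you use the same extension $\hat f(y,z)=(f(y),z)$ for $f\in\oip Y$, and $\hat f(y,z)=(f(y),\ph(z))$ on $\er(Y)\times L$, $\hat f(y,z)=(f(y),\psi(z))$ on $\el(Y)\times R$ for $f\in\oim Y$, and you spell out the order checks that the paper leaves implicit. Two small remarks: only bijectivity of $\ph$ and $\psi$ is needed, not that they are mutually inverse; and for a point lying in $\el(Y)\cap\er(Y)$, the comparison between its two fibres follows by transitivity through the point itself.
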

\begin{proof}
Suppose $f\in \oip Y$. Set
\[
\hat f(x) = \begin{cases}
f(x), & x\in Y,
\\
(f(y),z), & x=(y,z)\in \cD(Y).
\end{cases}
\]
Then $\hat f\in \oip{\ex(Y)}$.

Suppose $f\in \oim Y$. Set
\[
\hat f(x) = \begin{cases}
f(x), & x\in Y,
\\
(f(y),\ph(z)), & x=(y,z)\in \er(Y) \times L,
\\
(f(y),\psi(z)), & x=(y,z)\in \el(Y) \times R.
\end{cases}
\]
Then $\hat f\in \oim{\ex(Y)}$.
\end{proof}
By Fact \ref{cl:exos:1-1}, there exist $f_n\in\oipm{\ex_n(X)}$ for $\nom$ such that $f_0=f$ and $f_n=\restr{f_{n+1}}{\ex_n(X)}$ for $\nom$. Define $\hat f: \ex_\om(X)\to \ex_\om(X)$ by $\hat f(x)=f(x)$ for $x\in X$, and $\hat f(x)=f_n(x)$ for $x\in \ex_n(X)\setminus \ex_{n-1}(X)$.
\end{proof}

\begin{theorem}\label{t:exos:1}
Let $X$, $L$ and $R$ be LOTS.
Let $L$ have a minimum and no maximum, and let $L$ and $\rev R$ be order isomorphic.
Suppose the following conditions hold:
\begin{enumerate}
\item
if $x,y\in X\setminus \ea(X)$, then $f(x)=y$ for some $f\in\oipm X$;
\item
the LOTS $\ilpc(-\infty,a] + R + \ilpp(a,+\infty)$ and $\ilpp(-\infty,b) + L + \ilcp[b,+\infty)$ are order isomorphic to $X$ for $a\in \el(X)$ and $b\in \er(X)$.
\end{enumerate}
Then $\ex_\om(X,L,R)$ is a $\pm$-order-homogeneous LOTS.
\end{theorem}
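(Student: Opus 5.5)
Write $X_\om=\ex_\om(X,L,R)$. The strategy is to move every point of $X_\om$ into $X\setminus\ea(X)$, or directly between points of the same kind, by elements of $\oipm{X_\om}$. We then use condition (1) together with Statement \ref{s:exos:2} to connect any two such points. Since $L$ and $\rev R$ are order isomorphic, Statement \ref{s:exos:2} applies, so every element of $\oipm X$ (and, after re-indexing, of $\oipm{\ex_n(X)}$) extends to an element of $\oipm{X_\om}$. Consequently, it suffices to show that every point of $X_\om$ lies in the $\oipm{X_\om}$-orbit of some point of $X\setminus\ea(X)$, assuming $X\setminus\ea(X)\ne\es$. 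If it is empty, we handle that case separately, e.g.\ by showing all jump points are equivalent.

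\emph{Step 1: points of $\ea(X)$.} Let $a\in\el(X)$. Condition (2) gives an order isomorphism $h\colon X\to X'=\ilpc(-\infty,a]+R+\ilpp(a,+\infty)$. The key observation is that $X'$ sits inside $\ex(X;\sset a)=X\cup(\sset a\times R)$ as exactly that whole set. So $h$ identifies $X$ with $\ex(X;\sset a)$. By Statement \ref{s:exos:1}, $\ex_\om(\ex(X;\sset a))=X_\om$, and $h$ induces an order isomorphism $X_\om\to\ex_\om(X')=X_\om$. This induced map should follow from the functoriality of $\ex$ under order isomorphisms, as in Fact \ref{cl:exos:1-1}. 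Under it, $a$ goes to $h(a)$, which is some point of $X'$. We then choose $h$ so that the image of a nonjump point of $X$ lands on $a$, or so that $a$ goes to a point of $\sset a\times R$, which is not a jump point.

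A concrete check: in $X'$, the point $a$ is no longer a left jump point, since it is followed by $R$, which has no minimum. Likewise the maximum of $R$ is followed by the successor of $a$ in $X$. Hence every point of $X'$ other than the points of $\ea(X)\setminus\sset a$ and the jumps inside $R$ behaves like an interior point. Transporting back, some point of $X\setminus\ea(X)$ is mapped by an element of $\oip{X_\om}$ onto $a$. The case $b\in\er(X)$ is symmetric, using $L$.

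\emph{Step 2: induction on levels.} Every point of $X_\om$ lies in some $\ex_n(X)$. Applying Step 1 to $\ex_n(X)$ in place of $X$ requires verifying hypotheses (1) and (2) for $\ex_n(X)$. Instead of doing that, I would induct. A point $(z,y)\in\cD(\ex_{n-1}(X))$ with $y$ a nonjump point of $R$ (or $L$) is the image, under the map built in Step 1 for $z$, of a point of $\ex_{n-1}(X)$, modulo the identification. Therefore it lies in the orbit of a point of lower level. Jump points of $R$ and $L$ inside $\cD$ are jump points of $\ex_n(X)$, and they are handled by Step 1 applied one level up, via Statement \ref{s:exos:1}. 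Finally, the map is a LOTS automorphism, hence a homeomorphism, which gives homogeneity in the order sense.

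The main obstacle is making Step 1 precise: checking that the isomorphism from (2) really extends to an automorphism of $X_\om$ carrying $a$ to a non-$\ea$ point. This requires careful bookkeeping, through Statement \ref{s:exos:1}, of how jump points of $X'$ correspond to those of $\ex(X;\sset a)$. I would first verify that $\ea(X')=h(\ea(X))$ and that $h$ respects the $\el/\er$ types. Then the extension of $h$ is built level by level, as in Fact \ref{cl:exos:1-1}.
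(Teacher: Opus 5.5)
Your global reduction is sound. Orbits of $\oipm{\ex_\om(X)}$ partition $\ex_\om(X)$, and condition (1) together with Statement~\ref{s:exos:2} puts all of $X\setminus\ea(X)$ into one orbit. Your fallback for $X\setminus\ea(X)=\es$ is never needed, because (2) rules that case out when $X\neq\es$. Step~1 is also essentially correct, and it needs no choice of $h$. Any order isomorphism $h\colon X\to\ex(X;\sset a)$ from (2) extends level by level to an order automorphism $\hat h$ of $\ex_\om(\ex(X;\sset a))=\ex_\om(X)$ (Statement~\ref{s:exos:1}). Since $a\notin\ea(\ex(X;\sset a))$, the point $h^{-1}(a)$ lies in $X\setminus\ea(X)$. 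Since $h(X)\supset\sset a\times R$, every level-one point over $a$ is the image of a point of $X$. If also $a\in\er(X)$, then $a$ stays in $\ea$ and a second insertion on the other side is needed; the paper is equally brief here.

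The genuine gap is Step~2. For $n\ge 2$, a point $(z,y)\in\cD(\ex_{n-1}(X))$ has $z\in\ea(\ex_{n-1}(X))\setminus X$. ``The map built in Step~1 for $z$'' would be an order isomorphism $\ex_{n-1}(X)\to\ex(\ex_{n-1}(X);\sset z)$. That is exactly condition (2) for $\ex_{n-1}(X)$, which is not a hypothesis and which you explicitly chose not to verify. ``Step~1 applied one level up'' for jump points of the inserted copies has the same problem. So no point of level $\ge 2$ is actually reached.

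The missing idea is that (2) is invariant under order isomorphism, so it can be re-applied to copies of $X$ lying inside $\ex_\om(X)$. The paper (Fact~\ref{cl:exos:1}) follows the address $u=(x,u_1,\dots,u_n)$. Put $x_i=(x,u_1,\dots,u_i)$, $X_0=X$ and $X_i=\ex(X_{i-1};\sset{x_{i-1}})$. Each $X_i$ is order isomorphic to $X$ and satisfies $\ex_\om(X_i)=\ex_\om(X)$. One further insertion makes $u$ a non-$\ea$ point of such a copy $Y$. Doing this for one point, and then for the other inside the resulting copy, gives a common $Y$, where (1) and Statement~\ref{s:exos:2} finish.

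Alternatively, you can repair your own scheme by peeling the address from the front rather than the back. For the root $x\in\ea(X)$, the extension $\hat h$ of $h\colon X\to\ex(X;\sset x)$ satisfies $\hat h^{-1}(x,u_1,\dots,u_n)=(h^{-1}(x,u_1),u_2,\dots,u_n)$, which is a point of level $n-1$. An induction on $n$ then uses (2) only at points of $X$. This still requires checking that $(x,u_1)$ has the same $\el$/$\er$ type in $\ex(X;\sset x)$ as in $\ex(X)$.
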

\begin{proof}
Condition (2) is equivalent to $\ex(X;\sset x)$ being order isomorphic to $X$ for $x\in\ea(X)$.
\begin{claim}\label{cl:exos:1}
Let $u\in \ex_\om(X)$. There is $Y\subset \ex_\om(X)$ such that $X\subset Y$, $\ex_\om(Y)=\ex_\om(X)$, $Y$ is order isomorphic to $X$, and $u\in Y\setminus \ea(Y)$.
\end{claim}
\begin{proof}
For some $n\in\om$,
\begin{align*}
u &= (x,u_1,...,u_n)\in \ex_n(X).
\end{align*}
Set $x_0=x$, $x_1=(x,u_1)$, ..., $x_n=(x,u_1,...,u_n)=u$. Define $X_i\subset \ex_\om(X)$ for $i=0,1,...,n$ such that $x_i\in X_i$ for $i\leq n$ and $x_i \in \ea(X_i)$ for $i<n$. Set $X_0=X$ and $X_i=\ex(X_{i-1};\sset{x_{i-1}})$ for $i>0$. By Statement \ref{s:exos:1}, $\ex_\om(X_i) = \ex_\om(X)$, and by condition (2), $X_i$ is order isomorphic to $X$ for $i=0,1,...,n$. If $u=x_n\notin \ea(X_n)$, set $Y=X_n$. If $u\in \ea(X_n)$, set $Y=\ex(X_{n};\sset{u})$. Then $u\notin \ea(Y)$. By Statement \ref{s:exos:1} and (2), $Y$ is order isomorphic to $X$ and $\ex_\om(Y)=\ex_\om(X)$.
\end{proof}
Let $x,y\in\ex_\om(X)$. By Fact \ref{cl:exos:1}, there is $Y'\subset \ex_\om(X)$ such that $X\subset Y'$, $x\in Y'\setminus \ea(Y')$, $\ex_\om(Y')=\ex_\om(X)$, and $Y'$ is order isomorphic to $X$. By Fact \ref{cl:exos:1}, there is $Y\subset \ex_\om(Y')$ such that $Y'\subset Y$, $y\in Y\setminus \ea(Y)$, $\ex_\om(Y)=\ex_\om(Y')=\ex_\om(X)$, and $Y$ is order isomorphic to $Y'$ and to $X$.

Thus $x,y\in Y\setminus \ea(Y)$, $\ex_\om(Y)=\ex_\om(X)$, and $Y$ is order isomorphic to $X$.
By condition (1), $f(x)=y$ for some $f\in\oipm Y$. By Statement \ref{cl:exos:1-1}, $f$ extends to some $\hat f\in \oipm{\ex_\om(Y)}$. Then $\hat f(x)=y$ for $\hat f\in \oipm{\ex_\om(X)}$. Hence the LOTS $\ex_\om(X)$ is $\pm$-order-homogeneous.
\end{proof}


\section{Large compacta in homogeneous LOTS} \label{sec:bcs}

Set $\sS=(\lex{\om_1}\B\setminus \homm{\om_1})\cup (\homm{\om_1}\times\B)$. We define a linear order on $\sS$. The order on $\lex{\om_1}\B\setminus \homm{\om_1}$ is inherited from $\lex{\om_1}\B$, and the order on $\homm{\om_1}\times\B$ is lexicographic. Let $x\in \lex{\om_1}\B$ and $(y,a)\in \homm{\om_1}\times\B$. Then $x<(y,a)$ if and only if $x<y$, and $x>(y,a)$ if and only if $x>y$.

Define the map $\pi: \sS\to \lex{\om_1}\B$ by $\pi(x)=x$ for $x\in \lex{\om_1}\B\setminus \homm{\om_1}$, and $\pi((y,a))=y$ for $(y,a)\in \homm{\om_1}\times\B$. The map $\pi$ is a continuous increasing map of compact LOTS.

Define the order reversing automorphism $\Rs: \sS\to\sS$ by $\Rs(x)=R_{\om_1}(x)$ for $x\in \lex{\om_1}\B\setminus \homm{\om_1}$, $\Rs((y,0))=(R_{\om_1}(y),1)$, and $\Rs((y,1))=(R_{\om_1}(y),0)$ for $y\in  \homm{\om_1}$. Note that $\pi\circ R_{\om_1}=\Rs\circ \pi$.

For cardinals $\nu$ and $\mu$ set
\begin{align*}
\sS_{\nu,\mu} &= \set{x\in\sS: \cf_l(x,\sS)=\nu,\ \cf_r(x,\sS)=\mu},
&
\wS_{\nu,\mu} &= \sS_{\nu,\mu} \cup \sS_{\mu,\nu}.
\end{align*}

Note that $\mns$ is the minimum of $\sS$, and $\mxs$ is the maximum of $\sS$. From the construction of the space $\sS$ and Propositions \ref{p:blex:1+1} and \ref{p:1+1} we get
\begin{align*}
\sS_{0,\om_1}&=\sset{\mns},
\qquad
\sS_{\om_1,0}=\sset{\mxs},
\qquad
\wS_{0,\om_1}=\sset{\mns,\mxs},
\\
\sS_{1,\om_1}&=\set{x\in\sS : x\text{ is a right jump point}},
\\
\sS_{\om_1,1}&=\set{x\in\sS : x\text{ is a left jump point}},
\\
\wS_{1,\om_1}&=\set{x\in\sS : x\text{ is a jump point}},
\\
\wS_{\om,\om_1}&=\set{x\in\sS : \pi\chi(x,\lexab)=\om}
\end{align*}
and that the families
\begin{align*}
\cS &=
\{\,
\sS_{0,\om_1}, \sS_{\om_1,0}, \sS_{\om,\om_1}, \sS_{\om_1,\om}, \sS_{1,\om_1}, \sS_{\om_1,1}
\,\},
\\
\wcS &=
\{\,
\wS_{0,\om_1}, \wS_{\om,\om_1}, \wS_{1,\om_1}
\,\},
\end{align*}
are partitions of the set $\sS$.
Then the following statement follows from Propositions \ref{p:1+1} and \ref{p:blex:3-1}.

\begin{proposition}\label{p:bsc:1-1}
$\chi(x,\sS)=\om_1$ for all $x\in \sS$, and $w(\sS)=|\sS|=2^{\om_1}$.
\end{proposition}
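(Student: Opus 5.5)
The plan is to read off the character of $\sS$ from the one-sided cofinalities, and to compute $w(\sS)$ by counting jumps.

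\emph{Character.} By Proposition \ref{p:1+1.5}, $\chi(x,\sS)=\om+\max\{\cf_l(x,\sS),\cf_r(x,\sS)\}$. The partition $\cS$ shows that every $x\in\sS$ lies in some $\sS_{\nu,\mu}$ with $\om_1\in\{\nu,\mu\}$, and both cofinalities are at most $\om_1$, so $\chi(x,\sS)=\om_1$.

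To justify the partition, I will compute the cofinalities in each of two cases.
\begin{enumerate}
\item If $x\in\lex{\om_1}\B\setminus\homm{\om_1}$, the cofinalities of $\ilpp(-\infty,x)$ and $\ilpp(x,+\infty)$ are unchanged in passing from $\lex{\om_1}\B$ to $\sS$. Indeed, replacing a point $y$ by the two points $(y,0)<(y,1)$ neither adds nor removes cofinal sequences converging to $x\neq y$. Proposition \ref{p:blex:1}(1) then gives values in $\{0,1,\om,\om_1\}$ with one of them equal to $\om_1$.
\item If $x=(y,0)$ with $y\in\homm{\om_1}$, then $\cf_l(x,\sS)=\cf_l(y,\lex{\om_1}\B)=\om_1$ and $\cf_r(x,\sS)=1$, since $(y,1)$ is its successor. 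The case $x=(y,1)$ is symmetric, or follows by applying $\Rs$.
\end{enumerate}

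\emph{Weight.} $\sS$ is a compact LOTS: it has endpoints and no gaps, because $\pi$ is a monotone continuous surjection with finite fibres and $\lex{\om_1}\B$ is compact (Proposition \ref{p:h:1}). It is zero-dimensional by Proposition \ref{p:1+5}, since any $a<b$ in $\sS$ can be separated by a jump: either pull back a jump of $\lex{\om_1}\B$ between $\pi(a)<\pi(b)$, which exists by Proposition \ref{p:1+5} for $\lex{\om_1}\B$, or use the new jump $((y,0),(y,1))$ when $\pi(a)=\pi(b)=y$. So Proposition \ref{p:blex:3-1} gives $w(\sS)=|\jumps{\sS}|$. Every pair $((y,0),(y,1))$ with $y\in\homm{\om_1}$ is a jump, so it suffices to show $|\homm{\om_1}|=2^{\om_1}$. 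Whenever $x^{-1}(0)$ and $x^{-1}(1)$ are both unbounded in $\om_1$, Proposition \ref{p:blex:1}(1) gives $\cf_l=\cf_r=\om_1$, and there are $2^{\om_1}$ such $x$: fix values on even ordinals alternating $0,1$ and choose freely elsewhere. Hence $w(\sS)\geq 2^{\om_1}$.

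\emph{Cardinality.} $|\sS|\leq 2\cdot|\lex{\om_1}\B|=2^{\om_1}$, and $|\sS|\geq|\homm{\om_1}|=2^{\om_1}$. Also $w(\sS)\leq|\jumps{\sS}|\leq|\sS|^2=2^{\om_1}$, which closes the equality for the weight.

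The main obstacle is the bookkeeping: verifying that the order topology on $\sS$ is compact and zero-dimensional, and that the one-sided cofinalities at unsplit points are unchanged. Both are routine but require care near the doubled points.
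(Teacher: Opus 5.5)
Your proposal is correct and follows essentially the same route as the paper. The character comes from the one-sided cofinalities recorded in the partition $\cS$ (via Propositions \ref{p:1+1}/\ref{p:1+1.5}), and the weight from $w(\sS)=|\jumps{\sS}|$ (Proposition \ref{p:blex:3-1}), using the $2^{\om_1}$ new jumps $((y,0),(y,1))$ with $y\in\homm{\om_1}$. You also spell out what the paper leaves implicit: that $\sS$ is compact and zero-dimensional, that cofinalities at unsplit points are unchanged, and that $|\homm{\om_1}|=2^{\om_1}$.
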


\begin{proposition}\label{p:bsc:1}
Let $a,b,c,d\in \sS$, $a<b$ and $c<d$.
The following conditions are equivalent:
\begin{enumerate}
\item
the LOTS $\ilcc[a,b]$ and $\ilcc[c,d]$ are order isomorphic and have no isolated points;
\item
$\cf_r(a,\sS)=\cf_r(c,\sS)\geq \om$ and
$\cf_l(b,\sS)=\cf_l(d,\sS)\geq \om$.
\end{enumerate}
\end{proposition}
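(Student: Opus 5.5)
The plan is to model the argument on Theorem \ref{t:blex:3}, with Theorem \ref{t:blex:2} replaced by an analogue for $\sS$. The direction (1) $\rarr$ (2) is immediate. An order isomorphism $\ilcc[a,b]\to\ilcc[c,d]$ sends $a\mapsto c$ and $b\mapsto d$, so it preserves $\cf_r$ at the left endpoint and $\cf_l$ at the right endpoint. Since there are no isolated points, these cofinalities are infinite, and they agree with the cofinalities computed in $\sS$.

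For (2) $\rarr$ (1), I first need the $\sS$-analogue of Theorem \ref{t:blex:2}. Suppose $\cf_r(a,\sS)=\cf_l(b,\sS)=\om_1$. I claim that $\ilcc[a,b]$ is order isomorphic to $\ilcc[c,d]$ whenever $\cf_r(c,\sS)=\cf_l(d,\sS)=\om_1$, and in particular to $\ilcc[\mns,\mxs]=\sS$. To prove this I will use the projection $\pi$.

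\begin{itemize}
\item The endpoint condition transfers to $\pi(a),\pi(b)$ in $\lex{\om_1}\B$. If $a=(y,0)$, then $a$ is a left jump point, so $\cf_r(a)=1$; this case is excluded. If $a=(y,1)$, then $\cf_r(a,\sS)=\cf_r(y,\lex{\om_1}\B)=\om_1$, although $a$ itself is a right jump point of $\sS$.
\item I will set $a^*=\pi(a)$ if $a\notin\homm{\om_1}\times\B$, and $a^*=y$ if $a=(y,1)$; similarly for $b$.
\item By Theorem \ref{t:blex:2} (or Theorem \ref{t:blex:3}), there is an order isomorphism $g\colon\ilcc[a^*,b^*]\to\lex{\om_1}\B$.
\item An order isomorphism of subintervals of $\lex{\om_1}\B$ preserves $\cf_l$ and $\cf_r$ at interior points. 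It therefore maps $\homm{\om_1}\cap\ilpp(a^*,b^*)$ onto $\homm{\om_1}\cap\ilpp(0,1)$, and the endpoints are handled separately.
\item Consequently, $g$ lifts to an order isomorphism $\pi^{-1}(\ilcc[a^*,b^*])\cap\ilcc[a,b]\to\sS$ by acting as $g\times\mathrm{id}_\B$ on doubled points.
\end{itemize}

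The endpoint bookkeeping needs care. If $a=(y,1)$, the point $y$ has $\cf_l=\cf_r=\om_1$, so in $\lex{\om_1}\B$ we get $\cf_r(y)=\om_1$. Then $\ilcc[a,b]$ corresponds to $\ilcc[y,b^*]$ with the point $y$ not doubled, and $g(y)=0_{\om_1}$ is also not doubled in $\sS$, so the lift is consistent. This lifting lemma is where I expect the main obstacle: I must check that doubled points correspond exactly, including at the endpoints.

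With the lemma in hand, the general case follows the proof of Theorem \ref{t:blex:3}. I choose jumps $(a',b')$ and $(c',d')$ of $\sS$ inside $\ilpp(a,b)$ and $\ilpp(c,d)$; these exist because jump points are dense. At such jumps, $\cf_l(a')=\cf_r(b')=\om_1$. This reduces the problem to the case where one endpoint cofinality equals $\om_1$, say $\cf_r(a)=\om_1$ and $\cf_l(b)=\be\in\sset{\om,\om_1}$.

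\begin{itemize}
\item Let $M=\set{x\in\sS:\cf_l(x,\sS)=\cf_r(x,\sS)=\om_1}$. Its points are the doubled pairs $(y,0),(y,1)$, which behave like a jump.
\item Instead of $M$, I will use a dense set of jump pairs. I choose strictly increasing sequences of jumps $(p_\te,q_\te)$, $\te<\be$, cofinal in $\ilcp[a,b)$, and similarly sequences for $\ilcc[c,d]$.
\item Each block $\ilcc[q_\te,p_{\te+1}]$ has endpoint cofinalities $\om_1$, so the lemma gives order isomorphisms between corresponding blocks.
\item At limit stages $\te<\be$, the limit point of the $q$'s has $\cf_l<\om_1$ only when $\te$ has cofinality $\om$. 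I will arrange the sequences to be continuous at limits, taking the limits as new block endpoints; the limits in both intervals then have matching $\cf_l=\cf(\te)$.
\end{itemize}

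Concatenating the block isomorphisms, sending limits to limits and $b$ to $d$, gives the required order isomorphism $\ilcc[a,b]\to\ilcc[c,d]$. The absence of isolated points follows because all endpoint cofinalities are infinite and $\sS$ has no isolated points.
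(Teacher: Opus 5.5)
Your proof is correct, but it is organized differently from the paper's.

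\textbf{The paper's route.} The paper proves no $\sS$-analogue of Theorem \ref{t:blex:2} and does no re-gluing. It applies Theorem \ref{t:blex:3} directly to the projected intervals $\ilcc[\pi(a),\pi(b)]$ and $\ilcc[\pi(c),\pi(d)]$, then lifts the resulting isomorphism $f'$ to $\sS$ by doubling. The cofinalities transfer through $\pi$, and $\pi(a)<\pi(b)$ because $\cf_r(a,\sS)\geq\om$.

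\textbf{What your route adds and what it costs.} Your lifting argument works verbatim for any such $f'$, not only for $g\colon\ilcc[a^*,b^*]\to\lex{\om_1}\B$. It rests on two facts: interior points of $\homm{\om_1}$ are preserved, and an interval $\ilcc[a,b]$ with $\cf_r(a,\sS),\cf_l(b,\sS)\geq\om$ contains only one point over each of $\pi(a)$ and $\pi(b)$. So the whole second half of your argument can be replaced by a single application of Theorem \ref{t:blex:3}. That second half is the jumps $(a',b')$, the blocks $\ilcc[q_\te,p_{\te+1}]$, and the limit stages. What your longer route buys is twofold:
\begin{itemize}
\item It uses only Theorem \ref{t:blex:2} from Section \ref{sec:blex}.
\item Your endpoint bookkeeping is more careful than the paper's. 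The paper's lifting formula sends every doubled point $(x,i)$ to $(f'(x),i)$ and every undoubled point $u$ to $f'(u)$. This breaks when, for example, $a=(y,1)$ but $\pi(c)\notin\homm{\om_1}$, since it would produce $(\pi(c),1)\notin\sS$. It also breaks in the symmetric situation $c=(z,1)$ with $a$ undoubled. There one must simply set $f(a)=c$, which is exactly what your endpoint analysis does.
\end{itemize}

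\textbf{Two minor inaccuracies.} Neither affects correctness.
\begin{itemize}
\item The set $M=\set{x\in\sS:\cf_l(x,\sS)=\cf_r(x,\sS)=\om_1}$ is empty. Doubling turns each $y\in\homm{\om_1}$ into a jump $(y,0)<(y,1)$ with one-sided cofinality $1$, so your remark that its points are the doubled pairs is wrong. This is harmless, since you discard $M$ and use jump pairs instead.
\item After your reduction, the case $\be=\om_1$ is already covered by your lemma. Only $\be=\om$ remains, where the block sequence has length $\om$ and no limit stages arise. The discussion of continuity at limits is therefore unnecessary.
\end{itemize}
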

\begin{proof}
Let
$\al=\cf_r(a,\sS)=\cf_r(c,\sS)$ and
$\be = \cf_l(b,\sS)=\cf_l(d,\sS)$.
Clearly, (1) $\rarr$ (2). Let us prove (2) $\rarr$ (1). Since $\cf_r(\pi(a),\lexab)=\cf_r(\pi(c),\lexab)=\al$ and
$\cf_l(\pi(b),\lexab)=\cf_l(\pi(d),\lexab)=\be$, Theorem \ref{t:blex:3} gives an order isomorphism $f': \ilcc[\pi(a),\pi(b)]\to \ilcc[\pi(c),\pi(d)]$. Define an order isomorphism $f: \ilcc[a,b]\to\ilcc[c,d]$ by setting
\[
f(u)=\begin{cases}
f'(u), & u\in \homm{\om_1},
\\
(f'(x),y), & u\notin \homm{\om_1}\text{ and }u=(x,y),
\end{cases}
\]
for $u\in \ilcc[a,b]$.
\end{proof}

\begin{proposition}\label{p:bsc:1+1}
If $(a,b)\in \jumps\sS$, then $\ilcc[\mns,a]$ and $\ilcc[b,\mxs]$ are order isomorphic to $\sS$.
\end{proposition}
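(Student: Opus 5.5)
The plan is to reduce the claim to Proposition \ref{p:bsc:1}. Since $\Rs$ is an order reversing automorphism of $\sS$, it maps $\ilcc[b,\mxs]$ onto $\ilcc[\mns,\Rs(b)]$. Moreover $(\Rs(b),\Rs(a))\in\jumps\sS$. So it is enough to show that $\ilcc[\mns,a]$ is order isomorphic to $\sS$ for every left jump point $a$. The second interval then follows: $\ilcc[b,\mxs]\cong\rev\ilcc[\mns,\Rs(b)]\cong\rev\sS\cong\sS$, where the last isomorphism is $\Rs$.

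We apply Proposition \ref{p:bsc:1} to the intervals $\ilcc[\mns,a]$ and $\ilcc[\mns,\mxs]=\sS$. We need to check:
\begin{itemize}
\item $\cf_r(\mns,\sS)=\om_1$, which is the same on both sides since the left endpoints coincide;
\item $\cf_l(a,\sS)=\cf_l(\mxs,\sS)=\om_1$.
\end{itemize}
The first holds because $\mns\in\sS_{0,\om_1}$. For the second, $\mxs\in\sS_{\om_1,0}$, and a left jump point $a$ lies in $\sS_{\om_1,1}$ by the listed description of the partition $\cS$. These values are $\geq\om$, so condition (2) of Proposition \ref{p:bsc:1} holds and $\ilcc[\mns,a]\cong\ilcc[\mns,\mxs]$.

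The one point to double-check is that the partition identifications really give $\cf_l(a,\sS)=\om_1$ for left jump points of $\sS$. A left jump point of $\sS$ either lies in $\lex{\om_1}\B\setminus\homm{\om_1}$, or is a point $(y,0)$ with $y\in\homm{\om_1}$.
\begin{itemize}
\item In the first case, $a$ is a left jump point of $\lex{\om_1}\B$. By Proposition \ref{p:blex:1}(1) its left cofinality is $\om_1$, and the doubling of points of $\homm{\om_1}$ does not change left cofinalities of other points.
\item In the second case, $\cf_l((y,0),\sS)=\cf_l(y,\lex{\om_1}\B)=\om_1$, since $y\in\homm{\om_1}$.
\end{itemize}

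Alternatively, one can argue without Proposition \ref{p:bsc:1}. Write $\pi(a)\le a'$ with the appropriate left jump $a'$ of $\lex{\om_1}\B$ and use Theorem \ref{t:blex:2} for $\ilcc[0_{\om_1},\pi(a)]$. Then lift the isomorphism as in the proof of Proposition \ref{p:bsc:1}, since order isomorphisms of $\lex{\om_1}\B$ preserve $\homm{\om_1}$ by Proposition \ref{p:blexhome:1}(4). The main care point in either route is the case $a=(y,0)$: there $\pi(a)=y$ is not a jump point of $\lex{\om_1}\B$, but its right cofinality is still irrelevant and only $\cf_l=\om_1$ matters.
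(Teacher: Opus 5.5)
Your proof is correct and is essentially the paper's argument: both reduce to Proposition \ref{p:bsc:1} by checking that $\cf_r(\mns,\sS)=\cf_l(a,\sS)=\cf_l(\mxs,\sS)=\om_1$. The only differences are that you get $\ilcc[b,\mxs]\cong\sS$ from the reversal $\Rs$, whereas the paper applies Proposition \ref{p:bsc:1} directly using $\cf_r(b,\sS)=\om_1$, and that your case check of $\cf_l(a,\sS)=\om_1$ (for $a\in\lex{\om_1}\B\setminus\homm{\om_1}$ and for $a=(y,0)$) makes explicit what the paper only asserts.
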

\begin{proof}
Since $\cf_r(\mns,\cS)=\cf_l(a,\cS)=\cf_r(b,\cS)=\cf_l(\mxs,\cS)=\om_1$, Proposition \ref{p:bsc:1} implies that the LOTS $\ilcc[\mns,a]$, $\ilcc[b,\mxs]$ and $\sS$ are order isomorphic.
\end{proof}

\begin{proposition}\label{p:bsc:2}
Let $x,y\in \sS$.
The following conditions are equivalent:
\begin{enumerate}
\item
$f(x)=y$ for some $f\in\oip\sS$;
\item
$\cf_r(x,\sS)=\cf_r(y,\sS)$ and
$\cf_l(x,\sS)=\cf_l(y,\sS)$.
\end{enumerate}
\end{proposition}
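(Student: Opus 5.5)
The plan is to copy the proof of Proposition \ref{p:blexhome:1-1} for $\lexab$, replacing Theorem \ref{t:blex:3} by Proposition \ref{p:bsc:1} and the reversal $R_\la$ by $\Rs$.

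The direction (1) $\rarr$ (2) is immediate. An order automorphism maps $\lL(x,\sS)$ onto $\lL(f(x),\sS)$ and $\lR(x,\sS)$ onto $\lR(f(x),\sS)$, so it preserves both $\cf_l$ and $\cf_r$.

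For (2) $\rarr$ (1), put $\al=\cf_r(x,\sS)$ and $\be=\cf_l(x,\sS)$. The partition $\cS$ listed above shows that $(\be,\al)$ is one of $(0,\om_1)$, $(\om_1,0)$, $(1,\om_1)$, $(\om_1,1)$, $(\om,\om_1)$, $(\om_1,\om)$.

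If $0\in\sset{\al,\be}$, then $x=y\in\sset{\mns,\mxs}$ and the identity works.

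If $\al=1$, then $x$ and $y$ are left jump points; let $x'$ and $y'$ be the corresponding right jump points. By the partition, $\cf_l(x,\sS)=\cf_l(y,\sS)=\om_1$ and $\cf_r(x',\sS)=\cf_r(y',\sS)=\om_1$. Also $\cf_r(\mns,\sS)=\cf_l(\mxs,\sS)=\om_1$. Proposition \ref{p:bsc:1} then gives order isomorphisms $f_1:\ilcc[\mns,x]\to\ilcc[\mns,y]$ and $f_2:\ilcc[x',\mxs]\to\ilcc[y',\mxs]$. Since $\sS=\ilcc[\mns,x]\cup\ilcc[x',\mxs]$ and likewise for $y$, gluing $f_1$ and $f_2$ gives $f\in\oip\sS$ with $f(x)=y$. (One could also quote Proposition \ref{p:bsc:1+1}.)

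If $\be=1$, apply $\Rs$: the points $\Rs(x)$ and $\Rs(y)$ are left jump points. Take $f'\in\oip\sS$ with $f'(\Rs(x))=\Rs(y)$ and set $f=\Rs\circ f'\circ\Rs$, which lies in $\oip\sS$ because $\Rs$ is an involution reversing the order.

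If $\al$ and $\be$ are both infinite, then $\mns<x<\mxs$ and $\mns<y<\mxs$. Apply Proposition \ref{p:bsc:1} to the pairs $\ilcc[\mns,x]$, $\ilcc[\mns,y]$ and $\ilcc[x,\mxs]$, $\ilcc[y,\mxs]$. The hypotheses needed are $\cf_r(\mns,\sS)=\om_1$, $\cf_l(x,\sS)=\cf_l(y,\sS)=\be\geq\om$, $\cf_r(x,\sS)=\cf_r(y,\sS)=\al\geq\om$ and $\cf_l(\mxs,\sS)=\om_1$. The resulting isomorphisms both send $x$ to $y$ (as endpoints), so they glue to the required automorphism.

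The main obstacle is not this case analysis but whether Proposition \ref{p:bsc:1} is really available in the needed form. Its proof transfers an isomorphism $f'$ of intervals of $\lex{\om_1}\B$ to $\sS$. This works only if $f'$ maps points of $\homm{\om_1}$ to points of $\homm{\om_1}$, which holds because order isomorphisms between intervals preserve $\cf_l$ and $\cf_r$ at interior points. At the endpoints one must also check that the assignment in $\sS$ (which copy $(y,0)$ or $(y,1)$) is matched consistently. I would verify this compatibility, and that $\cf$ values in $\sS$ correspond correctly to those of $\pi(\cdot)$ in $\lex{\om_1}\B$, before relying on it.
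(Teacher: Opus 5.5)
Your proof is correct and matches the paper's proof case by case. The paper uses Proposition \ref{p:bsc:1+1} in the jump case, which you also mention, and your reading of $\cf_r(x,\sS)=1$ as making $x$ a \emph{left} jump point is the right one. Your worry about Proposition \ref{p:bsc:1} is harmless: send the endpoints $a\mapsto c$, $b\mapsto d$, and transfer $f'$ only on interior points, where it preserves $\cf_l$ and $\cf_r$ and hence maps $\homm{\om_1}$ onto $\homm{\om_1}$.
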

\begin{proof}
Let
$\al=\cf_r(x,\sS)=\cf_r(y,\sS)$ and
$\be = \cf_l(x,\sS)=\cf_l(y,\sS)$.
Clearly, (1) $\rarr$ (2). Let us prove (2) $\rarr$ (1).
If $0\in\sset{\al,\be}$, then $x=y$ and the statement is obvious.

Consider the case $1\in\sset{\al,\be}$. Consider the case $\al=1$. Then $x$ and $y$ are right jump points and $\be=\om_1$. Let $x'$ and $y'$ be the corresponding left jump points. By Proposition \ref{p:bsc:1+1}, there exist order isomorphisms $f_1: \ilcc[\mns,x']\to \ilcc[\mns,y']$ and $f_2: \ilcc[x,\mxs]\to \ilcc[y,\mxs]$. For $u\in \sS$, set
\[
f(u)=\begin{cases}
f_1(u), & u \leq x',
\\
f_2(u), & u \geq x.
\end{cases}
\]
Then $f\in\oip{\sS}$ and $f(x)=y$.
Consider the case $\be=1$. Then $\cf_l(\Rs(x),\sS)=\cf_l(\Rs(y),\sS)=1$ and $\cf_r(\Rs(x),\sS)=\cf_r(\Rs(y),\sS)=\la$. Hence $f'(\Rs(x))=\Rs(y)$ for some $f'\in \oip{\sS}$. We obtain $f(x)=y$ for $f=\Rs\circ f' \circ \Rs\in \oip\sS$.

Consider the case when $\al$ and $\be$ are infinite cardinals.
Since $\mns<x<\mxs$, $\mns<y<\mxs$, $\al\geq \om$, and $\be\geq \om$, Proposition \ref{p:bsc:1} gives order isomorphisms $f_1: \ilcc[\mns,x]\to \ilcc[\mns,y]$ and $f_2: \ilcc[x,\mxs]\to \ilcc[y,\mxs]$. For $u\in \sS$, set
\[
f(u)=\begin{cases}
f_1(u), & u < x,
\\
y, & u=x,
\\
f_2(u), & u > x.
\end{cases}
\]
Then $f\in\oip{\sS}$ and $f(x)=y$.
\end{proof}

\begin{proposition}\label{p:bsc:3}
\begin{enumerate}
\item
The family $\cS$
is the family of orbits of the action of the group $\oip\sS$ on $\sS$.
\item
The family $\wS$ is the family of orbits of the action of the group $\oipm\sS$ on $\sS$.
\end{enumerate}
\end{proposition}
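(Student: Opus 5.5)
Part (1) follows from Proposition \ref{p:bsc:2}. Part (2) then follows by adjoining the order reversing automorphism $\Rs$.

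For (1), Proposition \ref{p:bsc:2} says that $x,y\in\sS$ lie in the same $\oip\sS$-orbit if and only if the pairs $(\cf_l(x,\sS),\cf_r(x,\sS))$ and $(\cf_l(y,\sS),\cf_r(y,\sS))$ coincide. So the orbits are exactly the nonempty sets $\sS_{\nu,\mu}$. It remains to check which pairs $(\nu,\mu)$ occur, i.e.\ that $\cS$ is a partition of $\sS$ into nonempty sets. This is the list computed just before Proposition \ref{p:bsc:1-1}, and it comes from Proposition \ref{p:blex:1} via $\pi$:
\begin{enumerate}
\item Points of $\lex{\om_1}\B\setminus\homm{\om_1}$ keep their one-sided cofinalities, since the new points $(y,a)$ are inserted only at points $y$ with $\cf_l=\cf_r=\om_1$.
\item Each $y\in\homm{\om_1}$ is split into the jump $(y,0)<(y,1)$. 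Here $(y,0)$ is a left jump point with $\cf_l=\om_1$, and $(y,1)$ is a right jump point with $\cf_r=\om_1$.
\end{enumerate}
Hence the pairs that occur are $(0,\om_1)$, $(\om_1,0)$, $(1,\om_1)$, $(\om_1,1)$, $(\om,\om_1)$ and $(\om_1,\om)$. The pair $(\om_1,\om_1)$ no longer occurs, and $\sS_{\om,\om_1}$, $\sS_{\om_1,\om}$ are nonempty because they contain $\homg l,\om{\om_1}$ and $\homg r,\om{\om_1}$ respectively (Proposition \ref{p:blex:1+1}(3)(a)).

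For (2), the group $\oipm\sS$ is generated by $\oip\sS$ together with $\Rs$, since every $f\in\oim\sS$ equals $(f\circ\Rs)\circ\Rs$ with $f\circ\Rs\in\oip\sS$. An order reversing automorphism swaps left and right cofinalities. Therefore $\Rs$, and every $f\in\oim\sS$, maps $\sS_{\nu,\mu}$ onto $\sS_{\mu,\nu}$. It follows that each $\oipm\sS$-orbit is the union of an $\oip\sS$-orbit and its mirror image, i.e.\ one of the sets $\wS_{\nu,\mu}=\sS_{\nu,\mu}\cup\sS_{\mu,\nu}$. Conversely each such union is a single orbit, because $\Rs$ carries one half onto the other and each half is already an $\oip\sS$-orbit. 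This gives the family $\wcS$.

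The only point needing care is item 2 of the case analysis, i.e.\ confirming that splitting each $y\in\homm{\om_1}$ into $(y,0)<(y,1)$ changes no other point's one-sided cofinalities and gives the values stated there. I would check this directly from the definition of the order on $\sS$ and the continuity and monotonicity of $\pi$.
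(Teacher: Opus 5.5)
Your proof is correct and follows essentially the same route as the paper. The paper deduces (1) from Proposition \ref{p:bsc:2} together with the fact that $\cS$ is a partition of $\sS$, and (2) from (1) using $\Rs\in\oim\sS$ and $\wcS=\{M\cup\Rs(M): M\in\cS\}$; your extra checks (the six occurring pairs of one-sided cofinalities, and $\oim\sS=\{g\circ\Rs: g\in\oip\sS\}$ because $\Rs$ is an involution) only make explicit what the paper leaves implicit.
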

\begin{proof}
Since $\cS$ is a partition of the space $\sS$, item (1) follows from Proposition \ref{p:bsc:2}.
Item (2) follows from the fact that $\Rs\in\oim\sS$, $\wS=\set{M\cup \Rs(M): M\in\sS}$, and item (1).
\end{proof}

\begin{proposition}\label{p:bsc:4}
The following LOTS are order isomorphic to $\sS$: $\sS + \sS$, $\om \times \sS + \sS$, $\sS+ \rev (\om \times \sS)$.
\end{proposition}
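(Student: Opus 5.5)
The plan is to reduce all three isomorphisms to Proposition \ref{p:bsc:1} and Proposition \ref{p:bsc:1+1}: decompose $\sS$ along a suitable family of jumps and match the pieces with copies of $\sS$.

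\emph{The case $\sS+\sS$.} Pick any jump $(a,b)\in\jumps\sS$. Jump points are dense in $\sS$, since they are dense in $\lex{\om_1}\B$ by Proposition \ref{p:blex:1+1}(2) and $\pi$ preserves jumps away from $\homm{\om_1}$. Then $\sS=\ilcc[\mns,a]+\ilcc[b,\mxs]$ as linearly ordered sets. By Proposition \ref{p:bsc:1+1} each summand is order isomorphic to $\sS$, so $\sS\cong\sS+\sS$.

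\emph{The case $\om\times\sS+\sS$.} Fix a point $p\in\sS$ with $\cf_l(p,\sS)=\om$ and $\cf_r(p,\sS)=\om_1$, i.e.\ $p\in\sS_{\om,\om_1}$; this set is nonempty by Proposition \ref{p:blex:1+1}(3)(a) transferred through $\pi$. Choose a strictly increasing sequence of jumps $(a_n,b_n)$ with $a_n\to p$, and set $b_{-1}=\mns$. Using density of jumps, first take a strictly increasing sequence $x_n\to p$, then pick a jump inside each $\ilpp(x_n,x_{n+1})$. This decomposes
\[
\sS=\sum_{n\in\om}\ilcc[b_{n-1},a_n]\;+\;\ilcc[p,\mxs].
\]
Every block $\ilcc[b_{n-1},a_n]$ has $\cf_r$ of its left endpoint and $\cf_l$ of its right endpoint equal to $\om_1$, because jump points and $\mns$ have the other side of cofinality $\om_1$. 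By Proposition \ref{p:bsc:1} each block is order isomorphic to $\ilcc[\mns,\mxs]=\sS$, so the $\om$-sum is order isomorphic to $\om\times\sS$. For the final block $\ilcc[p,\mxs]$ we have $\cf_r(p,\sS)=\om_1$ and $\cf_l(\mxs,\sS)=\om_1$, so Proposition \ref{p:bsc:1} again gives $\ilcc[p,\mxs]\cong\sS$. Note that the blocks have no isolated points, since $\sS$ has none.

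\emph{The case $\sS+\rev(\om\times\sS)$.} Apply the order reversing automorphism $\Rs$ to the previous isomorphism. Since $\rev\sS\cong\sS$ via $\Rs$, we get $\sS\cong\rev(\om\times\sS+\sS)=\rev\sS+\rev(\om\times\sS)\cong\sS+\rev(\om\times\sS)$. Here $\rev(\om\times\sS)$ is read as the reversed sum of $\om$ copies of $\rev\sS\cong\sS$.

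\emph{Main obstacle.} The delicate point is checking that the decomposition in the second case is exhaustive: every $u<p$ must lie in some block. This holds because the $a_n$ are cofinal below $p$, using $\cf_l(p,\sS)=\om$. One must also check the existence of such a $p$ in $\sS$ itself rather than only in $\lex{\om_1}\B$. This requires verifying that $\cf_l$ and $\cf_r$ at points of $\lex{\om_1}\B\setminus\homm{\om_1}$ are unchanged in $\sS$, which follows from the construction of $\sS$.
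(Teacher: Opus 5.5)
Your proof is correct and takes essentially the same route as the paper. You split $\sS$ at a jump and apply Proposition \ref{p:bsc:1+1}; you cut $\ilcp[\mns,p)$ into $\om$ blocks between consecutive jumps and apply Proposition \ref{p:bsc:1} to each block and to $\ilcc[p,\mxs]$; and you transport the second isomorphism by $\Rs$ to get the third. The only difference is cosmetic: you fix $p\in\sS_{\om,\om_1}$ first and choose jumps converging to it, whereas the paper chooses the jumps first and takes $p=\sup_i b_i$, so it never needs to check separately that $\sS_{\om,\om_1}$ is nonempty.
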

\begin{proof}
Let $(a,b)\in \jumps \sS$. By Proposition \ref{p:bsc:1+1}, the LOTS $\ilcc[\mns,a]$, $\ilcc[b,\mxs]$ and $\sS$ are order isomorphic. Hence $\sS$ and $\sS+\sS$ are order isomorphic.

Take a sequence of jumps $(a_i,b_i)_{i\in\om}\subset \jumps \sS$ such that $b_i<a_{i+1}$ for $i\in\om$. Let $b_{-1}=\mns$ and $b=\sup \set{b_i:i\in\om}$. Then $\cf_r(b_{i-1},\sS)=\cf_l(a_i,\sS)=\om_1$ for $i\in\om$, and $\cf_l(b,\sS)=\cf_r(\mxs,\sS)=\om_1$. By Proposition \ref{p:bsc:1}, the LOTS $\ilcc[b_{i-1},a_i]$ for $i\in\om$ and $\ilcc[b,\mxs]$ are order isomorphic to $\sS$. Hence $\ilcp[\mns,b)$ is order isomorphic to $\om\times \sS$. Since $\sS=\ilcp[\mns,b)+\ilcc[b,\mxs]$, $\sS$ is order isomorphic to $\om\times \sS + \sS$.

Since $\sS$ is order isomorphic to $\rev \sS$, $\sS+ \rev(\om\times \sS)$ is order isomorphic to $\rev(\om\times \sS + \sS)$. Hence $\sS+ \rev(\om\times \sS)$ is order isomorphic to $\sS$.
\end{proof}

\begin{theorem}\label{t:bsc:1}
The LOTS $\ex_\om(\sS, \om\times \sS, \rev(\om\times \sS))$ is $\pm$-order-homogeneous.
\end{theorem}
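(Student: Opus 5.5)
We apply Theorem \ref{t:exos:1} with $X=\sS$, $L=\om\times\sS$ and $R=\rev(\om\times\sS)$, so the task is to verify its hypotheses.

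First, the structural hypotheses. $L=\om\times\sS$ has the minimum $(0,\mns)$ and no maximum, since $\om$ has no maximum. By definition $R=\rev L$, so $L$ and $\rev R$ are order isomorphic. Consequently $R$ has a maximum and no minimum, as the construction of $\ex$ requires.

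Next, condition (1) of Theorem \ref{t:exos:1}. In $\sS$, the set $\ea(\sS)$ consists of the minimum, the maximum and all jump points. By the partition $\wcS$, this set is exactly $\wS_{0,\om_1}\cup\wS_{1,\om_1}$. Hence $\sS\setminus\ea(\sS)=\wS_{\om,\om_1}$, which is a single orbit of $\oipm\sS$ by Proposition \ref{p:bsc:3}(2). Condition (1) follows.

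The remaining work is condition (2). Take $a\in\el(\sS)$, so $a$ is the maximum or a left jump point. We must show that $\ilpc(-\infty,a]+R+\ilpp(a,+\infty)$ is order isomorphic to $\sS$. We split into two cases.

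\emph{Case $a=\mxs$.} The set $\ilpp(a,+\infty)$ is empty, so the LOTS is $\sS+\rev(\om\times\sS)$. This is isomorphic to $\sS$ by Proposition \ref{p:bsc:4}.

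\emph{Case $a$ is a left jump point with partner $b$.} Then $\ilpp(a,+\infty)=\ilcc[b,\mxs]$. By Proposition \ref{p:bsc:1+1}, both $\ilcc[\mns,a]$ and $\ilcc[b,\mxs]$ are order isomorphic to $\sS$. So the sum becomes $\sS+\rev(\om\times\sS)+\sS$. Using Proposition \ref{p:bsc:4} twice, first $\sS+\rev(\om\times\sS)\cong\sS$ and then $\sS+\sS\cong\sS$, the whole sum is isomorphic to $\sS$.

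The case $b\in\er(\sS)$ is symmetric. If $b=\mns$, the LOTS is $\om\times\sS+\sS\cong\sS$. If $b$ is a right jump point, it is $\sS+\om\times\sS+\sS$; here we group $\om\times\sS+\sS\cong\sS$ and then apply $\sS+\sS\cong\sS$. Alternatively, one can conjugate by $\Rs$: it reverses the order, so it exchanges $\el$ and $\er$, and it also exchanges $L$ and $R$.

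The only point needing care is the bookkeeping of sums. In particular, we must use associativity of the ordered sum $+$ and check that the pieces glue without creating or destroying jumps. In the sum, $\ilpc(-\infty,a]$ has maximum $a$ and $R$ has no minimum, so no new jump appears at the junction. Since the statement is purely about order isomorphism, this is immediate. Theorem \ref{t:exos:1} then yields that $\ex_\om(\sS,\om\times\sS,\rev(\om\times\sS))$ is $\pm$-order-homogeneous.
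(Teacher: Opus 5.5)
Your proposal is correct and follows the paper's route: you verify the hypotheses of Theorem~\ref{t:exos:1}, getting condition (1) from $\sS\setminus\ea(\sS)=\wS_{\om,\om_1}$ and Proposition~\ref{p:bsc:3}(2), and condition (2) by splitting into the endpoint case and the jump-point case, using Propositions~\ref{p:bsc:1+1} and~\ref{p:bsc:4}. You also check the structural hypotheses on $L$ and $R$ explicitly, which the paper leaves implicit; the remark about jumps at the junctions is harmless but not needed.
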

\begin{proof}
Let us check that the conditions of Theorem \ref{t:exos:1} hold. Since $\sS\setminus \ea(\sS)=\wS_{\om,\om_1}$, condition (1) of Theorem \ref{t:exos:1} follows from Proposition \ref{p:bsc:3}(2).

Let us check condition (2). Let $L=\om\times \sS$ and $R=\rev(\om\times \sS)$, $a\in \el(X)$ and $b\in \er(X)$.

If $a=\max \sS$, then $A=\ilpc(-\infty,a] + R + \ilpp(a,+\infty)=\sS + \rev(\om\times \sS)$. By Proposition \ref{p:bsc:4}, $A$ is order isomorphic to $\sS$. If $a$ is a left jump point, then $A= \ilcc[\mns,a] + \rev(\om\times \sS) + \ilcc[a',\mxs]$, where $a'=\min \ilpp(a,+\infty)$. By Proposition \ref{p:bsc:1+1}, $A$ is order isomorphic to $\sS + \rev(\om\times \sS) + \sS$. By Proposition \ref{p:bsc:4}, $\sS + \rev(\om\times \sS)$ is order isomorphic to $\sS$. Hence $A$ is order isomorphic to $\sS + \sS$. By Proposition \ref{p:bsc:4}, $A$ is order isomorphic to $\sS$.

If $b=\min \sS$, then $B=\ilpp(-\infty,b) + L + \ilcp[b,+\infty)=\om\times \sS + \sS$. By Proposition \ref{p:bsc:4}, $B$ is order isomorphic to $\sS$. If $b$ is a right jump point, then $B= \ilcc[\mns,b'] + \om\times \sS + \ilcc[b,\mxs]$, where $b'=\max \ilpp(-\infty,b)$. By Proposition \ref{p:bsc:1+1}, $A$ is order isomorphic to $\sS + \om\times \sS + \sS$. By Proposition \ref{p:bsc:4}, $\om\times \sS+\sS$ is order isomorphic to $\sS$. Hence $B$ is order isomorphic to $\sS + \sS$. By Proposition \ref{p:bsc:4}, $B$ is order isomorphic to $\sS$.

By Theorem \ref{t:exos:1}, $\ex_\om(\sS, \om\times \sS, \rev(\om\times \sS))$ is $\pm$-order-homogeneous.
\end{proof}


\section{Examples and questions} \label{sec:qe}

A semitopological group $G$ that is a LOTS and not a $P$-space is metrizable (Theorem \ref{t:main:3}). In this statement the condition that $G$ is not a $P$-space cannot be dropped, and the condition that $G$ is a LOTS cannot be weakened to $G$ being a GO space.

The following statement follows from Proposition \ref{p:h:tg1}.
\begin{example}\label{e:1}
The group $\lex{\om_1}\R$ with the order topology is a non-metrizable LOTS group that is a $P$-space.
\end{example}

\begin{example}\label{e:2}
The Sorgenfrey line is a non-metrizable, first-countable GO paratopological group.
\end{example}

A semitopological group $G$ that is a GO space and not a $P$-space is submetrizable, paracompact, and has countable character (Theorem \ref{t:main:2}). In this statement the condition that $G$ is a semitopological group cannot be replaced by the condition that $G$ is a homogeneous space.

For a regular ordinal $\la$ denote $X_\la=\homg lr,\om\la$.

\begin{example}\label{e:3}
Let $\la$ be a regular ordinal.
The space $X_\la$ is a homogeneous countably compact LOTS with $\chi(X_\la)=\la$ and $\pi\chi(X_\la)=\om$.
\end{example}
\begin{proof}
By Corollary \ref{c:blexhome:1}, the space $X_\la$ is homogeneous. By Proposition \ref{p:blex:1+1}, $X_\la$ is a countably compact LOTS with $\pi\chi(X_\la)=\om$ and $\chi(X_\la)=\la$.
\end{proof}

By Theorem \ref{t:main:1}, if $\la>\om_1$ is a regular cardinal, then compact subsets of $X_\la$ are first countable.

\begin{example}\label{e:4}
The space $X_{\om_1}$ is a homogeneous countably compact LOTS with $\chi(X_{\om_1})=\om_1$, $\pi\chi(X_{\om_1})=\om$, and the transfinite line $\om_{1}+1$ embeds in $X_{\om_1}$.
\end{example}
\begin{proof}
By Proposition \ref{p:blex:1+1}, the transfinite line $\om_{1}+1$ embeds in $X_{\om_1}$.
\end{proof}

The LOTS $\lex {\om_1} \B$ does not embed in a homogeneous GO space (Proposition \ref{p:blex:5}).
Let $\Hs=\ex_\om(\sS, \om\times \sS, \rev(\om\times \sS))$.

\begin{example}\label{e:5}
The space $\Hs$ is a homogeneous $\sigma$-compact LOTS with $\chi(\Hs)=\om_1$ and $\pi\chi(\Hs)=\om$, and the compact space $\sS$ embeds in $\Hs$, with $w(\sS)=c(\sS)=|\sS|=2^{\om_1}$ and $\chi(x,\sS)=\om_1$ for all $x\in\sS$.
\end{example}
\begin{proof}
By Theorem \ref{t:bsc:1}, the space $\Hs$ is homogeneous. By Proposition \ref{p:exos:1}, $\Hs$ is $\sigma$-compact and contains the compact set $\sS$. By Proposition \ref{p:bsc:1-1}, $w(\sS)=c(\sS)=|\sS|=2^{\om_1}$ and $\chi(x,\sS)=\om_1$ for all $x\in\sS$.
By Theorem \ref{t:main:1}, $\chi(\Hs)=\om_1$ and $\pi\chi(\Hs)=\om$.
\end{proof}

\begin{question}\label{q:1-1}
Can the compact space $\sS$ be embedded in a homogeneous countably compact LOTS? Let $K$ be a compact subset of some homogeneous countably compact LOTS. Is it true that $\chi(x,K)\leq\om$ for some $x\in K$?
\end{question}

\begin{example}\label{e:6} \cite{Banakh2008}
The compact LOTS `double arrow' admits the structure of a right-topological group.
\end{example}

\begin{question}\label{q:1}
Does the LOTS $X_{\om_1}$ admit the structure of a right-topological group?
\end{question}

\begin{question}\label{q:1+1}
Does the LOTS $\Hs$ admit the structure of a right-topological group?
\end{question}

\begin{question}\label{q:2}
Let $X$ be a power homogeneous GO (LOTS) space.
Is it true that every compact subset of $X$ has tightness at most $\om_1$?
\end{question}

\begin{question}\label{q:3}
Let $X$ be a monotonically normal ($p$-)homogeneous space.
Is it true that every compact subset of $X$ has tightness at most $\om_1$?
\end{question}

There are many examples of separable, non-first-countable, non-metrizable monotonically normal topological groups \cite{GartsideReznichenko2000nmfs}. Such groups are stratifiable spaces
\cite[Theorem 24]{GartsideReznichenko2000nmfs}.

\begin{question}\label{q:4}
Let $G$ be a separable monotonically normal paratopological (semitopological, quasitopological) group. Is it true that $G$ is a stratifiable space?
\end{question}

\begin{question}[{\cite[Problem 6]{Collins1996}}]\label{q:5}
Let $G$ be a monotonically normal topological group with countable pseudocharacter. Is it true that $G$ is a stratifiable space?
\end{question}

\begin{question}\label{q:6}
Let $G$ be a monotonically normal paratopological (semitopological, quasitopological) group with countable pseudocharacter. Is it true that $G$ is a stratifiable space?
\end{question}

Monotonically normal topological vector spaces are stratifiable spaces \cite[Theorem 1]{Shkarin2004}.

\begin{question}[{\cite[Remarks (2)]{Shkarin2004}}]\label{q:7}
Let $G$ be a monotonically normal topological group and let $G$ contain a convergent sequence. Is it true that $G$ is a stratifiable space?
\end{question}

\begin{question}\label{q:8}
Let $G$ be a monotonically normal paratopological (semitopological, quasitopological) group and let $G$ contain a convergent sequence. Is it true that $G$ is a stratifiable space?
\end{question}

Monotonically normal paratopological groups are hereditarily paracompact \cite[Theorem 2.6]{BuzyakovaVural2014}, and hence compact subsets of such groups have countable tightness \cite[Theorem 3.5]{WilliamsZhou1998} (Theorem \ref{t:main:3+1}). Compact subsets of monotonically normal topological groups are metrizable \cite[Theorem 2a]{Gartside1998}.

\begin{question}\label{q:9}
Let $G$ be a monotonically normal paratopological (semitopological, quasitopological) group. Is it true that compact subsets of $G$ are metrizable (separable, first countable)?
\end{question}



\end{document}